\newtheorem{theorem}{Theorem}[section]
\newtheorem{lemma}[theorem]{Lemma}
\newtheorem{proposition}[theorem]{Proposition}
\newtheorem{prop}[theorem]{Proposition}
\newtheorem{corollary}[theorem]{Corollary}
\theoremstyle{remark}
\theoremstyle{definition}
\newtheorem{remark}[theorem]{Remark}
\newtheorem{example}[theorem]{Example}
\newtheorem{definition}[theorem]{Definition}
\def\x{\mathfrak{x}}
\def\R{\mathbb{R}}
\def\Z{\mathbb{Z}}
\def\Q{\mathbb{Q}}
\def\<{{\langle}}
\def\>{{\rangle}}
\def\e{{\epsilon}}
\title{The classification of Zamolodchikov periodic quivers}
\numberwithin{equation}{section}
\begin{document}

\author{Pavel Galashin}
\address{\hspace{-.3in} Department of Mathematics, Massachusetts Institute of Technology,
Cambridge, MA 02139, USA}
\email{galashin@mit.edu}

\author{Pavlo Pylyavskyy}
\address{\hspace{-.3in} Department of Mathematics, University of Minnesota,
Minneapolis, MN 55414, USA}
\email{ppylyavs@umn.edu}

\date{\today}

\thanks{P.~P. was partially supported by NSF grants  DMS-1148634, DMS-1351590, and Sloan Fellowship.}

\subjclass{
Primary 
13F60, % Cluster algebras
Secondary
37K10, 05E99. % Algebraic combinatorics: None of the above, but in this 
}

\keywords{Cluster algebras, Zamolodchikov periodicity, $W$-graphs, commuting Cartan matrices}

\begin{abstract}
Zamolodchikov periodicity is a property of certain discrete dynamical systems associated with quivers. 
It has been shown by Keller to hold for quivers obtained as products of two Dynkin diagrams. 
We prove that the quivers exhibiting Zamolodchikov periodicity are in bijection with pairs of 
commuting Cartan matrices of finite type. Such pairs were classified by Stembridge in his study of 
$W$-graphs. The classification includes products of Dynkin diagrams along with four other infinite families, and
eight exceptional cases. We provide a proof of Zamolodchikov periodicity for all four remaining infinite families,
and verify the exceptional cases using a computer program.
\end{abstract}

\maketitle

% \setcounter{tocdepth}{1}
% \tableofcontents 

\section{Introduction}

\subsection{$Y$-systems and $T$-systems associated with recurrent quivers}

A {\it {quiver}} $Q$ is a directed graph without loops and cycles of length two. {\it {Quiver mutations}} are certain transformations one can apply to $Q$ at each of its vertices. One can then associate two levels of algebraic dynamics with quiver mutations: $Y$-variable dynamics and $X$-variable dynamics. 
%The details will be  explained below and in Appendix \ref{sec:XimpliesY}. %in the Appendix.

Of special interest are quivers $Q$ with a bipartite underlying graph having the following property. Color all vertices of $Q$ black or white according to the bipartition. It is well known that mutations of $Q$ (see Section \ref{subsec:quiver_mutations}) at vertices that are not connected by an edge commute. Thus, we can denote 
$\mu_{\bullet}$ (resp., $\mu_{\circ}$) the combined mutations at all black (resp., all white) vertices of $Q$, without worrying about the order in which those are performed. A \emph{recurrent} quiver is a quiver $Q$ with a bipartite underlying graph such that each of $\mu_{\bullet}$ and $\mu_{\circ}$
reverses the directions of arrows in $Q$, but has no other effect on the quiver. Figure \ref{fig:zper1} shows two examples of recurrent quivers. 

  \begin{figure}
    \begin{center}
\vspace{-.1in}
\scalebox{1}{
\input{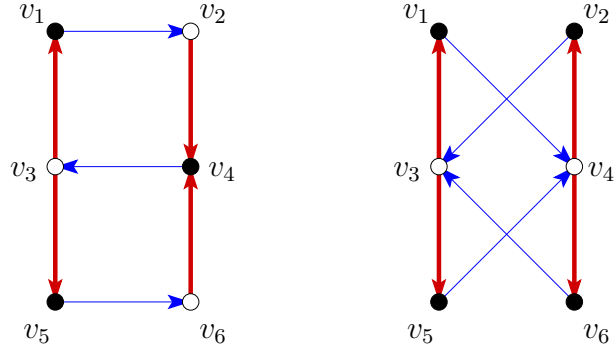} 
}
\vspace{-.1in}
    \end{center} 
    \caption{Two recurrent quivers. The edges are colored according to Section~{\ref{subsec:classification}}}
    \label{fig:zper1}
\end{figure}

\def\Vert{{ \operatorname{Vert}}}
\def\VertQ{{ \Vert(Q)}}
\def\x{{ \mathbf{x}}}

Given a bipartite recurrent quiver $Q$ one can associate to it a {\it {$Y$-system}} and a {\it {$T$-system}}, which are certain infinite systems of algebraic equations defined as follows. Let $\VertQ$ be the set of vertices of $Q$ and define $\x:=\{x_v\}_{v\in\VertQ}$ to be the set of indeterminates, one for each vertex of $Q$. Let $\Q(\x)$ be the field of rational functions in these variables. The \emph{$T$-system} associated with a bipartite recurrent quiver $Q$ is a family $T_v(t)$ of elements of $\Q(\x)$ satisfying the following relations for all $v\in\VertQ$ and all $t\in\Z$ :
\[ T_v(t+1)T_v(t-1)=\prod_{u\to v} T_u(t)+\prod_{v\to w} T_w(t).\]

Here the products are taken over all \emph{arrows} connecting the two vertices. 

Similarly, the \emph{$Y$-system} associated with $Q$ is a family $Y_v(t)$ of elements of $\Q(\x)$ satisfying the following relations for all \emph{white} vertices $v\in\VertQ$ and all $t\in\Z$:
\[ Y_v(t+1)Y_v(t-1)=\prod_{u\to v} (1+Y_u(t))\prod_{v\to w} (1+Y ^{-1}_w(t)) ^{-1}.\]
When $v$ is a \emph{black} vertex, the directions of arrows in the recurrence get reversed:
\[ Y_v(t+1)Y_v(t-1)=\prod_{v\to u} (1+Y_u(t))\prod_{w\to v} (1+Y ^{-1}_w(t)) ^{-1}.\]
We have to give different formulas for vertices of different colors because we prefer not to mutate the edges of the quiver. Alternatively, a single formula would be enough if instead of directions of edges one would use their colors as defined in Section~\ref{subsec:classification}.

We say that a map $ \e:\VertQ\to\{0,1\},\ v\mapsto\e_v$ is a \emph{bipartition} if for every edge $(u,v)$ of $Q$ we have $\e_u\neq \e_v$. We color a vertex $v$ black if and only if $\e_v=1$. Then both $T$- and $Y$-systems associated with $Q$ split into two completely independent parts. Without loss of generality we may consider only one of them. From now on we assume that both systems are defined only for $t\in\Z$ and $v\in\VertQ$ satisfying 
\[t+\e_v\equiv 0\pmod 2.\] 

Both systems are set to the following initial conditions: 
\[T_v(\e_v)=Y_v(\e_v)=x_v\]
for all $v\in\VertQ$. 

\begin{example}
 For the second quiver in Figure \ref{fig:zper1} we have 
 
 \[T_{v_3}(2)= \frac{x_1x_5+x_2x_6}{x_3};\quad T_{v_4}(2)= \frac{x_1x_5+x_2x_6}{x_4};\]
 \[T_{v_2}(3)=\frac{T_{v_3}(2)+T_{v_4}(2)}{x_2}=\frac{x_1x_5}{x_2x_3}+\frac{x_1x_5}{x_2x_4}+\frac{x_6}{x_3}+\frac{x_6}{x_4}.\]
 
%  $$\mu_{\circ}(x_1) = x_1, \; \mu_{\circ}(x_2) = x_2, \; \mu_{\circ}(x_3) = x_3^{-1} (x_1x_5+x_2x_6),$$ $$\mu_{\circ}(x_4) = x_4^{-1} (x_2x_6+x_1x_5), \; \mu_{\circ}(x_5) = x_5, \; \mu_{\circ}(x_6) = x_6.$$ 
\end{example}

Let us say that the $T$-system or the $Y$-system associated with a recurrent quiver $Q$ is {\it {periodic}} if we have 
%$(\mu_{\circ} \mu_{\bullet})^N = id$ (respectively,  $(\mu_{\circ}^Y \mu_{\bullet}^Y)^N = id$) 
$T_v(t+2N)=T_v(t)$ (respectively, $Y_v(t+2N)=Y_v(t)$)
for some positive integer $N$ and for all $v\in\VertQ,t\in\Z$ with $t+\e_v$ even. We also refer to a recurrent quiver $Q$ as \emph{Zamolodchikov periodic}
if the associated $T$-system is periodic. %The reason for choosing the $T$-system in this definition is the fact that the periodicity of the $T$-system associated to $Q$ is equivalent to the periodicity of the corresponding $Y$-system, see Remark \ref{rem:na}.
Equivalently, by Remark~\ref{rem:na}, $Q$ is Zamolodchikov periodic if and only if the associated $Y$-system is periodic.

It is a natural question to ask which bipartite recurrent quivers are Zamolodchikov periodic. In this paper, \emph{we completely classify such quivers}. Almost all literature up until now has been concerned with the tensor product case, explained below. However, the second quiver 
in Figure \ref{fig:zper1} is already Zamolodchikov periodic, but does not arise from the tensor product construction.

\begin{remark}
 We use the term \emph{Zamolodchikov periodic} to indicate the periodicity of the \emph{bipartite} dynamics we consider in this paper. We use this term when referring to quivers. When referring to the specific $T$-systems or $Y$-systems, we just call them periodic. This is in agreement 
 with the more general notion of $T$ and $Y$-systems, introduced by T.~Nakanishi in \cite{Na}.
\end{remark}

\subsection{The tensor product case and its history} \label{sec:tensor}

Let $\Lambda$ and $\Lambda'$ be two $ADE$ Dynkin diagrams. Since $ADE$ Dynkin diagrams are bipartite, we can write:
\[\Vert(\Lambda) = \Lambda_0 \sqcup \Lambda_1;\quad \Vert(\Lambda') = \Lambda'_0 \sqcup \Lambda'_1.\]
Define the {\it {tensor product}} $\Lambda \otimes \Lambda'$, also referred to as {\it {box product}}
in the literature, to be the following quiver.
\begin{itemize}
 \item The vertices of $\Lambda \otimes \Lambda'$ are pairs $(u,w) \in \Vert(\Lambda)\times\Vert(\Lambda')$.
 \item The edges of $\Lambda \otimes \Lambda'$ connect $(u,w)$ and $(u,v)$ if $w$ and $v$ are connected in $\Lambda'$, and also connect $(u,w)$ and $(v,w)$ if $u$ and $v$ are connected in $\Lambda$.
 \item The directions of the the edges in $\Lambda \otimes \Lambda'$ are from $(\Lambda_0, \Lambda'_0)$ to $(\Lambda_1, \Lambda'_0)$, from $(\Lambda_1, \Lambda'_0)$ to $(\Lambda_1, \Lambda'_1)$, from $(\Lambda_1, \Lambda'_1)$ to $(\Lambda_0, \Lambda'_1)$, 
 from $(\Lambda_0, \Lambda'_1)$ to $(\Lambda_0, \Lambda'_0)$.
\end{itemize}
It is easy to check that thus obtained tensor products of simply-laced Dynkin diagrams are bipartite recurrent quivers. 
\begin{example}
 The first quiver in Figure \ref{fig:zper1} is the tensor product $A_3 \otimes A_2$.
\end{example}

The following theorem has a long history, which we will briefly explain. 

\begin{theorem}[B.~Keller, \cite{K}]
 If $\Gamma$ and $\Delta$ are two $ADE$ Dynkin diagrams and $h$ and $h'$ are the associated Coxeter numbers (see Section \ref{sec:ADEDynkin}), then the corresponding $T$- and $Y$-systems are periodic, i.e. they satisfy
 \[T_v(t+2(h+h'))=T_v(t);\quad Y_v(t+2(h+h'))=Y_v(t)\]
  for all $v\in\VertQ,\ t\in\Z$ with $t+\e_v$ even.
\end{theorem}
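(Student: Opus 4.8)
The plan is to deduce the $T$-system periodicity from the $Y$-system periodicity --- by the quoted Remark~\ref{rem:na} the two are equivalent, and the $Y$-variables are the more tractable object --- and then to recognize the two-step evolution $t\mapsto t+2$ of the $Y$-system as a composite cluster mutation. Concretely, passing from the slice $\{Y_v(t):t+\e_v\text{ even}\}$ to the next one is exactly the application of $\mu_\circ$ followed by $\mu_\bullet$ in the $Y$-seed (coefficient) dynamics of a cluster algebra whose exchange quiver is $Q=\Gamma\otimes\Delta$. Since $Q$ is a bipartite recurrent quiver, $\mu_\bullet\mu_\circ$ returns the quiver to itself, so periodicity of the $Y$-system is equivalent to the statement that the $(h+h')$-fold iterate of this composite mutation fixes every $Y$-variable.

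First I would reduce the rational problem to a combinatorial one. By the separation-of-additions formula of Fomin--Zelevinsky, each $Y_v(t)$ is a subtraction-free rational expression in the initial variables whose exponent data is governed by the cluster algebra with principal coefficients; consequently, periodicity of the full $Y$-system follows once one knows both the periodicity of the corresponding piecewise-linear tropical $Y$-system and the periodicity of the underlying sequence of exchange matrices, the latter being automatic from recurrence. This isolates the real content: a combinatorial periodicity statement for a dynamical system living on the lattice of the product root system attached to $(\Gamma,\Delta)$.

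The cleanest way to obtain this periodicity --- and the route taken by Keller --- is categorical. One categorifies the cluster dynamics by a $2$-Calabi--Yau triangulated category $\mathcal{C}$ built from the derived category $D^b(k\Gamma\otimes_k k\Delta)$ of the tensor product of the path algebras, inside which the composite mutation $\mu_\bullet\mu_\circ$ is realized by an explicit autoequivalence $\Phi$ (a ``periodicity functor''). Periodicity of the $Y$-system then becomes the assertion that $\Phi^{h+h'}\cong\mathrm{id}$. The key input is the fractional Calabi--Yau property of the hereditary category $D^b(k\Lambda)$ for $\Lambda$ of type $ADE$ with Coxeter number $h$, namely $\nu^{h}\cong\Sigma^{h-2}$, relating the Serre functor $\nu$ and the shift $\Sigma$; in the cluster quotient this already forces $\Sigma^{h+2}\cong\mathrm{id}$, recovering the single-diagram case $\Delta=A_1$. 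Tensoring the two Dynkin factors multiplies these relations, and after passing to the quotient the surviving power of the Serre/Coxeter functor collapses to the identity precisely after $h+h'$ steps, which doubles to the period $2(h+h')$ on the original time scale.

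The main obstacle is the categorification itself: one must prove that the $Y$-variables are faithfully recorded by the indices of objects of $\mathcal{C}$ and that mutation of $Y$-seeds matches the action of $\Phi$, which requires the machinery of cluster categories, quivers with potential, and the comparison of $c$- and $g$-vectors with the numerical invariants of $\mathcal{C}$; this is where all the representation-theoretic weight lies. Once the dictionary is in place, the periodicity is a short consequence of the fractional Calabi--Yau computation, and verifying that the exponent bookkeeping produces exactly $2(h+h')$ rather than a proper divisor is a careful but routine check using the eigenvalues of the Coxeter elements of $W_\Gamma$ and $W_\Delta$.
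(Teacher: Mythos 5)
First, a framing point: the paper does not prove this statement at all --- it is imported from Keller \cite{K} as a known theorem, with Remark~\ref{rem:na} supplying the equivalence between $T$- and $Y$-system periodicity. The paper's own contribution runs in the opposite direction: it \emph{uses} this theorem as a black box, reducing the three remaining infinite families of Stembridge's classification to the tensor product case in Section~\ref{sec:ADEtrop}, and it reproves only the $A_n\otimes A_m$ case, by elementary tropical means, in Section~\ref{sec:AA}. So your proposal is not an alternative to any argument in this paper; it is an outline of the proof in the cited reference.

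Judged as a proof, your outline has a genuine gap, concentrated in the sentence ``Tensoring the two Dynkin factors multiplies these relations, and after passing to the quotient the surviving power \dots collapses to the identity precisely after $h+h'$ steps.'' Nothing multiplies formally here. The algebra $k\Gamma\otimes_k k\Delta$ is not hereditary, its derived category is not fractionally Calabi--Yau in any naive sense, and there is no ready-made cluster category to pass to: Keller must work with Amiot's generalized cluster category of the quiver $Q(\Gamma\otimes\Delta)$ equipped with its canonical potential, prove (using, among other things, Plamondon's results \cite{Pl} and the comparison of indices with $g$-vectors) that bipartite $Y$-seed mutation is faithfully computed by a specific autoequivalence $\Phi$, and then establish $\Phi^{h+h'}\cong\mathrm{id}$ by a genuinely two-variable computation in which the one-factor relations $\nu^{h}\cong\Sigma^{h-2}$ and $(\nu')^{h'}\cong\Sigma^{h'-2}$ enter only after $\Phi$ has been rewritten through the Serre functor of the tensor product category. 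Your proposal names these ingredients but defers all of them to ``machinery,'' which is exactly where the entire theorem lives; as written it is an annotated citation rather than a proof. Two smaller points: the statement only asserts that the period \emph{divides} $2(h+h')$, so your closing worry about ruling out proper divisors is unnecessary; and your preliminary reduction to tropical periodicity plus exchange-matrix periodicity is redundant on Keller's route (the functor isomorphism yields periodicity of the full $Y$-seeds directly), although it is precisely the mechanism --- Proposition~\ref{prop:tropical_birational_summary} --- that this paper uses for the cases it does prove.
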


\begin{remark}
 Keller's theorem is stated for $Y$-systems. The $T$-system version is implicit in \cite{K}, however, since the periodicity of $g$-vectors is proved, see Remark \ref{rem:na}.
\end{remark}

The periodicity of $Y$-systems was first discovered by Zamolodchikov in his study of thermodynamic Bethe ansatz \cite{Z}. His work was in the generality of simply-laced Dynkin diagrams. The conjecture was generalized by Ravanini-Valeriani-Tateo \cite{RVT}, 
Kuniba-Nakanishi \cite{KN}, Kuniba-Nakanishi-Suzuki \cite{KNS}, Fomin-Zelevinsky \cite{FZy}. 
The special cases of the more general tensor product version of the conjecture were proved by Frenkel-Szenes \cite{FS}, Gliozzi-Tateo \cite{GT}, Fomin-Zelevinsky \cite{FZy}, Volkov \cite{Vo}, Szenes \cite{Sz}. In full generality it was proved by Keller \cite{K} and later in a different 
way by  Inoue-Iyama-Keller-Kuniba-Nakanishi \cite{IIKKN1, IIKKN2}.

Besides thermodynamic Bethe ansatz, $T$-systems and $Y$-systems arise naturally in the study of other solvable lattice models, see Kuniba-Nakanishi-Suzuki~\cite{KNS}, Kirillov-Reshetikhin~\cite{KR}, Reshetikhin~\cite{R}, Ogievetsky-Wiegmann~\cite{OW}; in 
the study of Kirillov-Reshetikhin modules and their characters, see Frenkel-Reshetikhin \cite{FR}, Knight \cite{Kni}, Nakajima \cite{N}; as well as in other places. For an excellent survey on appearances of $T$-systems and $Y$-systems in physics, see \cite{KNSi}.

\subsection{Preliminaries}

\subsubsection{$ADE$ Dynkin diagrams and their Coxeter numbers}\label{sec:ADEDynkin} 
By an \emph{$ADE$ Dynkin diagram} we mean a Dynkin diagram of type $A_n,D_n,E_6,E_7,$ or $E_8$. The following characterization of $ADE$ Dynkin diagrams is due to Vinberg \cite{V}:
\begin{theorem}\label{thm:Vinberg}
 Let $G=(V,E)$ be an undirected graph with possibly multiple edges. Then $G$ is an $ADE$ Dynkin diagram if and only if there exists a map $\nu:V\to\R_{>0}$ such that for all $v\in V$, 
 \[2\nu(v)>\sum_{(u,v)\in E} \nu(u).\]
\end{theorem}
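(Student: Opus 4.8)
The plan is to translate the inequality into a statement about the spectral radius of the adjacency matrix and then reduce to the classical classification of graphs with small spectral radius. Let $A=A_G$ denote the adjacency matrix of the multigraph $G$, so that $A_{uv}$ equals the number of edges between $u$ and $v$; then $\sum_{(u,v)\in E}\nu(u)=(A\nu)_v$, and the stated inequality reads precisely $A\nu<2\nu$ entrywise for some $\nu>0$. Since $ADE$ diagrams are connected and this condition is checked vertex-by-vertex within each connected component, it suffices to treat connected $G$. For connected $G$ the matrix $A$ is nonnegative and irreducible, so by Perron--Frobenius it has a simple largest eigenvalue $\rho(G)\ge 0$ admitting a strictly positive eigenvector $\phi$, and the Collatz--Wielandt formula gives $\rho(G)=\min_{x>0}\max_v (Ax)_v/x_v$. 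Consequently the existence of a positive $\nu$ with $A\nu<2\nu$ is equivalent to $\rho(G)<2$, and the theorem reduces to showing that a connected graph satisfies $\rho(G)<2$ if and only if it is an $ADE$ Dynkin diagram.

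For the direction $ADE\Rightarrow\rho(G)<2$, I would use that each $ADE$ diagram $X$ is obtained from the corresponding extended (affine) diagram $\tilde X$ by deleting a single vertex. The vector $\delta$ of marks of $\tilde X$ is strictly positive and satisfies $A_{\tilde X}\delta=2\delta$, so Perron--Frobenius certifies $\rho(\tilde X)=2$; by strict monotonicity of the Perron root under deletion of a vertex from a connected graph, $\rho(X)<2$. Taking $\nu=\phi$ to be the Perron eigenvector of $X$ then gives $A\phi=\rho(X)\phi<2\phi$, as required.

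For the converse $\rho(G)<2\Rightarrow ADE$, I would run a forbidden-subgraph argument. Any subgraph $H\subseteq G$ satisfies $A_H\le A_G$ entrywise, hence $\rho(H)\le\rho(G)$; thus $\rho(G)<2$ forbids $G$ from containing any subgraph with $\rho\ge 2$. Each of the following graphs has $\rho=2$, verified by exhibiting a positive null vector of $2I-A$ (its vector of marks): a double edge and every cycle (the $\tilde A$ family), the star $K_{1,4}$ ($\tilde D_4$), the two-branch-point ``H''-trees ($\tilde D_n$), and the three-legged trees $T_{2,2,2}$, $T_{3,3,1}$, $T_{5,2,1}$ ($\tilde E_6,\tilde E_7,\tilde E_8$). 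Excluding these in turn forces $G$ to have only simple edges and no cycle (so $G$ is a tree), maximum degree at most $3$, and at most one vertex of degree $3$. Hence $G$ is either a path $A_n$ or a three-legged tree $T_{p,q,r}$ with $p\ge q\ge r\ge 1$; a short analysis of the leg lengths, ruling out the affine $T$'s above via $T_{a,b,c}\subseteq T_{p,q,r}\Leftrightarrow (a,b,c)\le (p,q,r)$, leaves exactly $D_n=T_{n-3,1,1}$, $E_6=T_{2,2,1}$, $E_7=T_{3,2,1}$, and $E_8=T_{4,2,1}$. Together with the paths these are precisely the $ADE$ diagrams.

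The spectral reformulation is routine Perron--Frobenius theory; the hard part will be the combinatorial classification, and in particular the bookkeeping at the boundary: one must correctly identify the minimal forbidden configurations as exactly the extended Dynkin diagrams, verify $\rho=2$ for each by producing its marks, and carry out the finite case check on the leg lengths without omissions. The two monotonicity inputs---weak monotonicity $A_H\le A_G\Rightarrow\rho(H)\le\rho(G)$ for the forbidden-subgraph step, and strict monotonicity under vertex deletion for the $ADE$ direction---are standard but should be invoked carefully.
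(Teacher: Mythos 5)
A point of order first: the paper contains \emph{no proof} of this theorem for you to be compared against --- it is stated as a known characterization, attributed to Vinberg with a citation, and used as a black box (e.g.\ in the proof of Proposition~\ref{prop:subadditive_ADE}). So your proposal stands or falls on its own, and it stands: this is the standard self-contained argument, and your outline is correct and complete. The equivalence of the existence of $\nu>0$ with $A\nu<2\nu$ entrywise with $\rho(G)<2$ is right; one direction is the Perron eigenvector, and for the other you can use Collatz--Wielandt as you say, or avoid irreducibility entirely by noting that $A\nu\le c\nu$ with $c:=\max_v (A\nu)_v/\nu(v)<2$ forces $\rho(A)\le c$ upon iterating. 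The forward direction via affine extensions (the marks give a positive $2$-eigenvector, hence $\rho(\tilde X)=2$ by Perron--Frobenius; strict monotonicity of the Perron root under passing to a proper principal submatrix of an irreducible matrix gives $\rho(X)<2$) is correct, and the converse exclusion of $\tilde A_n$ (double edge and cycles), $\tilde D_n$ (including $K_{1,4}=\tilde D_4$), and $T_{2,2,2},T_{3,3,1},T_{5,2,1}$ is complete: your leg-length bookkeeping ($r=1$, then $q\le 2$, then $p\le 4$ when $q=2$) yields exactly $D_n$, $E_6$, $E_7$, $E_8$ alongside the paths, with no omissions.

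Two minor remarks, neither a gap. First, as literally stated the theorem requires $G$ to be connected (or ``$ADE$ Dynkin diagram'' to be read as ``disjoint union of $ADE$ Dynkin diagrams''), since a disjoint union of two $ADE$ diagrams admits a strictly subadditive labeling but is not itself a Dynkin diagram; your restriction to connected $G$ is the intended reading and matches how the paper applies the result, component by component. Second, in the step excluding two vertices of degree $3$, it is worth saying explicitly that the path joining them, together with two extra neighbors at each end, forms a copy of some $\tilde D_m$; this uses that $G$ is already known to be a simple tree of maximum degree $3$, so that all these vertices are distinct and the configuration really is the ``H''-tree.
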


For each $ADE$ Dynkin diagram $\Lambda$ there is an associated integer $h(\Lambda)$ called \emph{Coxeter number}. We list Coxeter numbers of $ADE$ Dynkin diagrams in Table~\ref{tab:coxeter}.

\begin{table}
\centering
\begin{tabular}{|c|c|c|c|c|c|}\hline
 $\Lambda$   & $A_n$  & $D_m$  & $E_6$  & $E_7$ & $E_8$\\\hline
$h(\Lambda)$ & $n+1$  & $2m-2$ & $12$   & $18$  & $30$\\\hline
\end{tabular}
\caption{\label{tab:coxeter}Coxeter numbers of $ADE$ Dynkin diagrams}
\end{table}

\def\i{{\mathbf{i}}}

The following construction will be convenient to describe admissible $ADE$ bigraphs below and is going to be an important ingredient of the proof in Section~\ref{sec:ADEtrop}. The only diagrams that have a non-trivial automorphism $\i$ of order $2$ preserving the color of the vertices are the ones of types $A_{2n-1},D_n$, or $E_6$. We are going to take each of these Dynkin diagrams $\Lambda$ and describe a new Dynkin diagram $\Lambda'$. For each vertex $v$ of $\Lambda$ fixed by $\i$, we introduce two vertices $v_+$ and $v_-$ of $\Lambda'$, and for each pair of vertices $(u_+,u_-)$ of $\Lambda$ which are mapped by $\i$ to each other, we introduce just one vertex $u$ of $\Lambda'$. We say that the vertex $v$ in $\Lambda$ \emph{corresponds} to the vertices $v_+,v_-$ in $\Lambda'$ while both of the vertices $u_+,u_-$ in $\Lambda$ \emph{correspond} to the vertex $u$.

  \begin{figure}
    \begin{center}
\vspace{-.1in}
\scalebox{1.1}{
\input{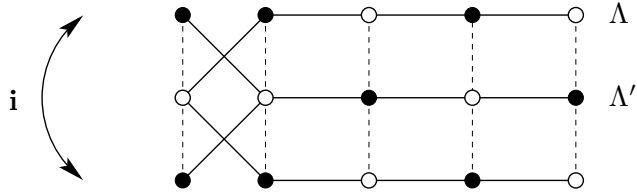} 
}
\vspace{-.1in}
    \end{center} 
    \caption{Dynkin diagrams $A_9$ and $D_6$, the correspondence among the vertices is shown by vertical dashed lines.}
    \label{fig:zper13}
\end{figure}

  \begin{figure}
    \begin{center}
\vspace{-.1in}
\scalebox{1.1}{
\input{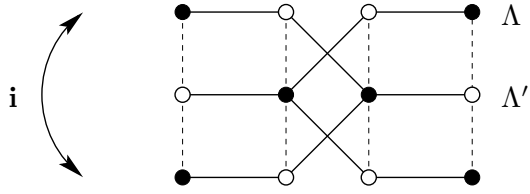} 
}
\vspace{-.1in}
    \end{center} 
    \caption{Two copies of the Dynkin diagram $E_6$, the correspondence among the vertices is shown by vertical dashed lines.}
    \label{fig:zper14}
\end{figure}

For two vertices $u',v'$ of $\Lambda'$ there is an edge between them if and only if there is an edge $(u,v)$ in $\Lambda$ such that $u$ corresponds to $u'$ and $v$ corresponds to $v'$. 

This construction transforms $A_{2n-1}$ into $D_{n+1}$ and vice versa, see Figure~\ref{fig:zper13}; it transforms $E_6$ into itself and induces a nontrivial correspondence between its vertices. We illustrate this correspondence in Figure \ref{fig:zper14}.

\begin{remark}\label{remark:coxeter}
 This is a baby version of the construction in Section~\ref{sec:ADEtrop}. Note that whenever two $ADE$ Dynkin diagrams are related by such a construction, their Coxeter numbers coincide: \[h(A_{2n-1})=h(D_{n+1})=2n.\]
\end{remark}

\subsubsection{Admissible $ADE$ bigraphs}\label{subsec:classification}
In \cite{S} Stembridge studies admissible $W$-graphs for the case when $W=I(p)\times I(q)$ is a direct product of two dihedral groups. These $W$-graphs encode the structure of representations of Iwahori-Hecke algebras, and were first introduced by Kazhdan and Lusztig in \cite{KL}.
The following definitions are adapted from \cite{S} with slight modifications (discussed in Section~\ref{subsec:2by2}). A \emph{bigraph} 
%In \cite{S} Stembridge makes the following definition. An {\it {admissible $ADE$ bigraph}} 
is an ordered pair of simple (undirected) graphs $(\Gamma, \Delta)$ which share a common set of vertices $V:=\Vert(\Gamma)=\Vert(\Delta)$ 
and do not share edges. A bigraph is called \emph{bipartite} if there is a map $\e:V\to\{0,1\}$ such that for every edge $(u,v)$ of $\Gamma$ or of $\Delta$ we have $\e_u\neq \e_v$. 

\def\Bigraph{{G}}
\def\Quiver{{Q}}

There is a simple one-to-one correspondence between bipartite quivers and bipartite bigraphs. Namely, to each bipartite quiver $Q$ with a bipartition $\e:\VertQ\to\{0,1\}$ we  
associate a bigraph $G(Q)=(\Gamma(Q),\Delta(Q))$ on the same set of vertices defined as follows:
\begin{itemize}
 \item $\Gamma(Q)$ contains an (undirected) edge $(u,v)$ if and only if $Q$ contains a directed edge $u\to v$ with $\e_u=0,\e_v=1$;
 \item $\Delta(Q)$ contains an (undirected) edge $(u,v)$ if and only if $Q$ contains a directed edge $u\to v$ with $\e_u=1,\e_v=0$. 
\end{itemize}
Similarly, we can direct the edges of any given bipartite bigraph $G$ to get a bipartite quiver $Q(G)$.

We will be interested in a particular type of bigraphs. An \emph{admissible $ADE$ bigraph} is a bipartite bigraph such that
\begin{itemize}
 \item each connected component of $\Gamma$ is an $ADE$ Dynkin diagram;
 \item each connected component of $\Delta$ is an $ADE$ Dynkin diagram;
 \item the adjacency $|V|\times|V|$ matrices $A_\Gamma$ and $A_\Delta$ of $\Gamma$ and $\Delta$ commute. 
\end{itemize}

One important property of  admissible $ADE$ bigraphs is the following:

\begin{proposition}[{\cite[Corollary 4.4]{S}}]\label{prop:coxeter}
 In a connected admissible ADE bigraph $(\Gamma,\Delta)$, there exist positive integers $p$ and $q$ such that every connected component of $\Gamma$ has Coxeter number $p$ and every connected component of $\Delta$ has Coxeter number $q$.
\end{proposition}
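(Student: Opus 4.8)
The plan is to reduce the statement to a Perron--Frobenius argument. The key classical input is that for an $ADE$ Dynkin diagram $\Lambda$ with Coxeter number $h$, the adjacency matrix $A_\Lambda$ has largest eigenvalue $2\cos(\pi/h)$; this eigenvalue is simple with a strictly positive eigenvector (Perron--Frobenius, as $\Lambda$ is connected), and every other eigenvalue of $A_\Lambda$ is strictly smaller in absolute value. Since $h\mapsto 2\cos(\pi/h)$ is strictly increasing, two $ADE$ diagrams share this top eigenvalue if and only if they have equal Coxeter number. Hence it suffices to prove that all connected components of $\Gamma$ have the \emph{same} largest adjacency eigenvalue, and likewise for $\Delta$; the conclusion for $\Delta$ follows by symmetry, so I focus on $\Gamma$.

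Write $V=V_1\sqcup\dots\sqcup V_k$ for the decomposition of $V$ into connected components of $\Gamma$, let $p_i$ be the Coxeter number of $V_i$, and set $\lambda_i=2\cos(\pi/p_i)$. Let $p=\max_i p_i$, put $\lambda^\ast=2\cos(\pi/p)$, and let $S=\{i:\lambda_i=\lambda^\ast\}$ be the set of components attaining the maximum. Because $A_\Gamma$ is block diagonal with respect to the $V_i$ and the top eigenvalue of each block is simple, the $\lambda^\ast$-eigenspace $E$ of $A_\Gamma$ is spanned by the vectors $f_i$ for $i\in S$, where $f_i$ is the positive Perron--Frobenius eigenvector of the block $V_i$ extended by zero elsewhere. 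In particular every vector of $E$ is supported on $W:=\bigcup_{i\in S}V_i$.

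Next I would bring in the commutation relation. Since $A_\Gamma A_\Delta=A_\Delta A_\Gamma$, the matrix $A_\Delta$ preserves each eigenspace of $A_\Gamma$: if $A_\Gamma f=\lambda^\ast f$ then $A_\Gamma(A_\Delta f)=A_\Delta(A_\Gamma f)=\lambda^\ast(A_\Delta f)$, so $A_\Delta f\in E$. Consider $f^\ast=\sum_{i\in S}f_i\in E$, which is nonnegative with support exactly $W$. Then $A_\Delta f^\ast\in E$, so it too vanishes off $W$. Suppose for contradiction that $W\neq V$. As the bigraph $(\Gamma,\Delta)$ is connected and $W$ is a union of connected components of $\Gamma$, no edge of $\Gamma$ can join $W$ to $V\setminus W$; hence there must be an edge $(u,v)$ of $\Delta$ with $u\in W$ and $v\in V\setminus W$. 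Then $(A_\Delta f^\ast)_v=\sum_{w:(w,v)\in\Delta}f^\ast_w\geq f^\ast_u>0$, contradicting the vanishing of $A_\Delta f^\ast$ outside $W$. Therefore $W=V$, every component of $\Gamma$ has Coxeter number $p$, and by the symmetric argument every component of $\Delta$ has a common Coxeter number $q$.

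The only genuinely delicate point is the claim that the $\lambda^\ast$-eigenspace of $A_\Gamma$ is supported on $W$; this rests on the fact that for a component with Coxeter number $p_i<p$ the value $\lambda^\ast$ exceeds the spectral radius $\lambda_i$ of that block, so $\lambda^\ast$ is not an eigenvalue of the block and it contributes nothing to $E$. This is exactly where the $ADE$ hypothesis is used rather than mere connectivity, via the control $2\cos(\pi/h)$ on the top eigenvalue together with its strict monotonicity in $h$. Everything else is the standard interplay between simultaneous eigenspaces of commuting symmetric matrices and Perron--Frobenius positivity, so I expect the write-up to be short once these spectral facts about $ADE$ adjacency matrices are cited.
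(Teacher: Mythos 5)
Your argument is correct, but there is nothing in the paper to compare it against line by line: the paper does not prove Proposition~\ref{prop:coxeter} at all, it simply quotes it from Stembridge \cite[Corollary~4.4]{S}. Your reconstruction is in the same spirit as Stembridge's spectral treatment of commuting Cartan matrices (commutativity plus Perron--Frobenius), and the core of it is sound: the $\lambda^\ast$-eigenspace $E$ of $A_\Gamma$ is spanned by the Perron vectors of the components attaining the maximal Coxeter number, $A_\Delta$ preserves $E$ by commutativity, and connectivity of the bigraph forces a $\Delta$-edge from $W$ to $V\setminus W$, giving the contradiction $(A_\Delta f^\ast)_v>0$. One side remark in your opening paragraph is false, though harmlessly so: since $ADE$ diagrams are bipartite, the spectrum of $A_\Lambda$ is symmetric about $0$, so $-2\cos(\pi/h)$ is an eigenvalue of the same absolute value as the top one, and ``every other eigenvalue is strictly smaller in absolute value'' fails. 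You never use that claim; what the argument actually needs --- and what you correctly state at the end --- is only that a block with Coxeter number $p_i<p$ has spectral radius $2\cos(\pi/p_i)<\lambda^\ast$ and hence cannot contribute to $E$. As for what each route buys: the paper's citation keeps things short, while your self-contained proof makes visible exactly where the $ADE$ hypothesis enters (monotone control of the dominant eigenvalue by the Coxeter number), which is the same mechanism the paper exploits later when it imports Stembridge's common positive eigenvector in the proof of Proposition~\ref{prop:subadditive_ADE}.
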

For a bipartite quiver $Q$ such that $G(Q)$ is an admissible $ADE$ bigraph, we denote $h(Q):=p$ and $h'(Q):=q$ to be the corresponding Coxeter numbers given by this proposition.

It is convenient to think of $(\Gamma, \Delta)$ as of a single graph with edges of two colors: red for the edges of $\Gamma$ and blue for the edges of $\Delta$.

The problem of classifying admissible $ADE$ bigraphs is equivalent to that of classifying pairs of (possibly reducible) commuting Cartan matrices of finite type 
satisfying two minor additional conditions equivalent to $\Gamma$ and $\Delta$ not sharing any edges and $G$ being bipartite.

Given a bigraph $(\Gamma,\Delta)$, its \emph{dual} is the bigraph $(\Delta,\Gamma)$ on the same set of vertices. It is obtained from the original bigraph by switching the colors of all edges, and on the level of bipartite quivers this corresponds to reversing the directions of all arrows. 

In \cite{S}, Stembridge classifies admissible $ADE$ bigraphs. He proves that there are, up to taking duals, exactly five infinite families, as well as eight exceptional cases. Here we list the infinite families. For the exceptional cases we refer the reader to Appendix~\ref{sec:app}.

{\bf {Tensor products}}. This is the familiar to us class of bigraphs that are obtained from quivers defined in Section \ref{sec:tensor}. 
The first quiver in Figure \ref{fig:zper1} is the tensor product $A_3 \otimes A_2$. 
A bit more sophisticated example of $D_5 \otimes A_3$ is shown in Figure \ref{fig:zper2}. 

  \begin{figure}
    \begin{center}
\vspace{-.1in}
\scalebox{1}{
\input{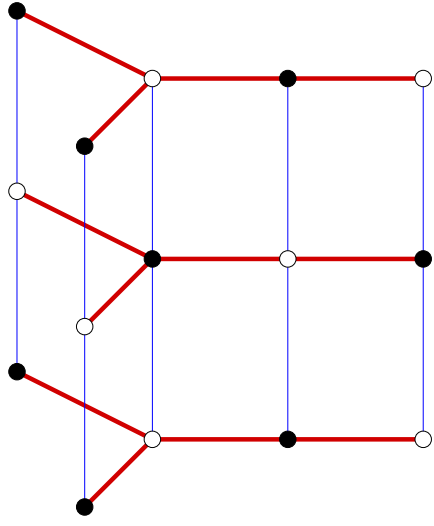} 
}
\vspace{-.1in}
    \end{center} 
    \caption{Tensor product $D_5 \otimes A_3$.}
    \label{fig:zper2}
\end{figure}

{\bf {Twists}}. Twists are denoted $\Lambda \times \Lambda$, where $\Lambda$ is an $ADE$ Dynkin diagram. Twists are obtained by taking two red copies $\Lambda_1$ and $\Lambda_2$ of the Dynkin diagram and connecting $u_1 \in \Lambda_1$ and $w_2 \in \Lambda_2$ by a blue edge if and only if 
$u_1$ and the corresponding vertex $w_1 \in \Lambda_1$ are connected. This way, both $\Gamma$ and $\Delta$ consist of two copies of $\Lambda$. 
The second quiver in Figure \ref{fig:zper1} corresponds to the twist $A_3 \times A_3$. % if we omit the orientations of edges. 
An example of $D_5 \times D_5$ is shown in Figure \ref{fig:zper3}. 

  \begin{figure}
    \begin{center}
\vspace{-.1in}
\scalebox{1}{
\input{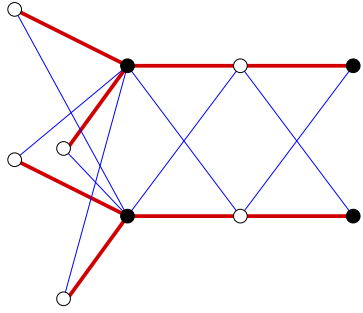} 
}
\vspace{-.1in}
    \end{center} 
    \caption{Twist $D_5 \times D_5$.}
    \label{fig:zper3}
\end{figure}

{\bf {The $(AD^{m-1})_n$ family}}. This family is obtained by taking $A_{m-1} \otimes D_{n+1}$ and connecting the last (blue) copy of $D_{n+1}$ by red edges to an additional blue component $A_{2n-1}$. More precisely, the vertices of $D_{n+1}$ are connected to the \emph{corresponding} vertices of $A_{2n-1}$ in the sense of Section \ref{sec:ADEDynkin}. This family is self dual, when one swaps the red and blue colors in $(AD^{m-1})_n$ one obtains $(AD^{n-1})_m$. An example of $(AD^{3})_5$ which is dual to $(AD^4)_4$ is shown in Figure \ref{fig:zper5}. Note that Proposition~\ref{prop:coxeter} applies here: the red connected components of $(AD^{3})_5$ are $A_7$ and $D_5$ with $h(A_7)=h(D_5)=8$ while the blue connected components of $(AD^{3})_5$ are $A_9$ and $D_6$ with $h(A_9)=h(D_6)=10$. Thus if $G$ is the bigraph of type $(AD^{3})_5$ then $h(G)=8$ and $h'(G)=10$.

  \begin{figure}
    \begin{center}
\vspace{-.1in}
\scalebox{1}{
\input{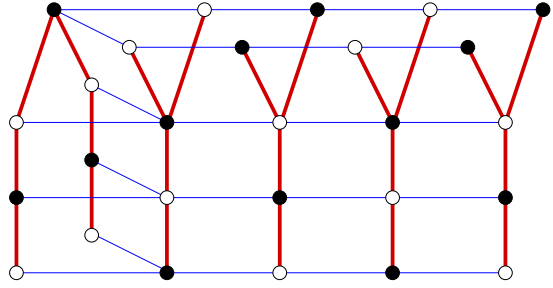} 
}
\vspace{-.1in}
    \end{center} 
    \caption{The case $(AD^{3})_5 \sim (AD^{4})_4$.}
    \label{fig:zper5}
\end{figure}

{\bf {The $(A^{m-1}D)_n$ family}}. This family is obtained by taking $A_{m-1} \otimes A_{2n-1}$ and connecting the last (blue) copy of $A_{2n-1}$ by red edges to an additional blue component $D_{n+1}$, again, via the correspondence described in Section~\ref{sec:ADEDynkin}.
This family is self dual, when one swaps red and blue colors in $(A^{m-1}D)_n$ one obtains $(A^{n-1}D)_m$. An example of $(A^{3}D)_5$ which is dual to $(A^{4}D)_4$ is shown in Figure \ref{fig:zper6}. 

  \begin{figure}
    \begin{center}
\vspace{-.1in}
\scalebox{1}{
\input{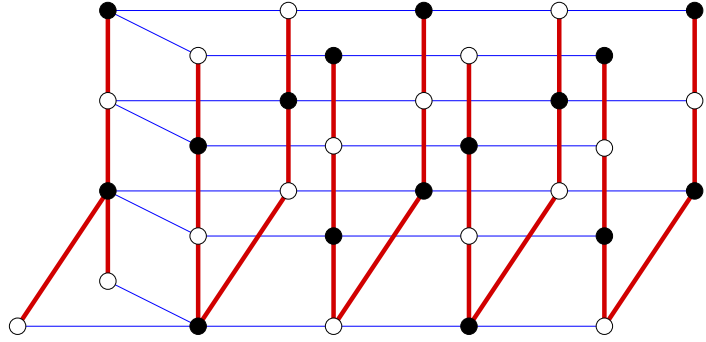} 
}
\vspace{-.1in}
    \end{center} 
    \caption{The case $(A^{3}D)_5 \sim (A^{4}D)_4$.}
    \label{fig:zper6}
\end{figure}

{\bf {The $EE^{n-1}$ family}}. This family is obtained by taking $E_6 \otimes A_{n-1}$ and connecting the last (red) copy of $E_6$ by blue edges to an additional red component $E_6$ via the correspondence described in Section~\ref{sec:ADEDynkin}.
The blue edges then form two components of type $A_{2n-1}$ and another two components of type $D_{n+1}$, and the middle $A_{2n-1}$ is connected to the middle $D_{n+1}$ via the construction of Section~\ref{sec:ADEDynkin}. The dual of this family is denoted by $A_{2n-1}\equiv A_{2n-1}*D_{n+1}\equiv D_{n+1}$ in \cite{S}, however, dual bigraphs yield the same $T$- and $Y$- systems (see Section~\ref{subsec:bigraph_quiver_translation}), so we only need to consider one of these two families. 

An example of $EE^4$ is shown in Figure \ref{fig:zper7}. 

  \begin{figure}
    \begin{center}
\vspace{-.1in}
\scalebox{1}{
\input{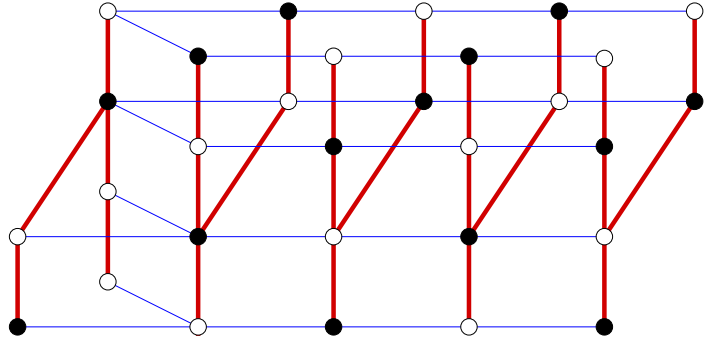} 
}
\vspace{-.1in}
    \end{center} 
    \caption{The case $EE^4$.}
    \label{fig:zper7}
\end{figure}

\subsubsection{The strictly subadditive labeling property}\label{subsec:subadditive}

Let $Q$ be a recurrent quiver. A {\it {labeling}} of its vertices is a function $\nu: \VertQ \rightarrow \mathbb R_{>0}, \;$ which assigns to each vertex $v$ of $Q$ a positive real label $\nu(v)$.
We say that a labeling $\nu$ is {\it {strictly subadditive}} if for any vertex $v\in\VertQ$, the following two inequalities hold:
$$2 \nu(v) > \sum_{u \rightarrow v} \nu(u), \; \text{ and }  \; 2 \nu(v) > \sum_{v \rightarrow w} \nu(w).$$
As before, the sums here are taken over all possible arrows. 
  \begin{figure}
    \begin{center}
\vspace{-.1in}
\scalebox{1}{
\input{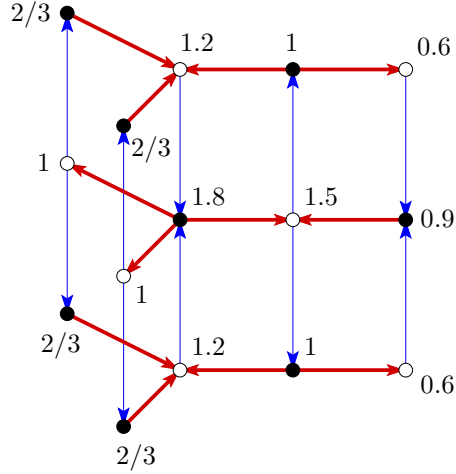} 
}
\vspace{-.1in}
    \end{center} 
    \caption{A strictly subadditive labeling of $D_5 \otimes A_3$.}
    \label{fig:zper8}
\end{figure}

An example of a strictly subadditive labeling of a recurrent quiver $Q(D_5 \otimes A_3)$ is given in Figure \ref{fig:zper8}. We say that a recurrent quiver $Q$ has {\it {the strictly subadditive labeling property}} if there exists a strictly subadditive labeling $\nu:\VertQ\to\R_{>0}$.

Strictly subadditive labelings of quivers have been introduced in \cite{P}. The terminology is motivated by Vinberg's {\it {subadditive labelings}} \cite{V} for non-directed graphs (see Theorem~\ref{thm:Vinberg}).

\subsubsection{The fixed point property}\label{subsec:fixpt}

Consider a $T$-system associated with a recurrent quiver. We say that a particular assignment 
$\rho: \VertQ \to\R_{>1}$
of real values greater than $1$ to the vertices of $Q$ is a \emph{fixed point} if when we substitute $x_u:=\rho(u)$ for all $u\in\VertQ$ into the $T$-system, it becomes time-independent:
\[T_v(t)\mid_{\x:=\rho}=\rho(v)\]
for all $t\in\Z$ and $v\in \VertQ$ with $t+\e_v$ even. 

In other words, $\rho$ is a \emph{fixed point} if for all $v\in\VertQ$ it satisfies
\[\rho(v)^2=\prod_{u\to v}\rho(u)+\prod_{v\to w}\rho(w).\]

\begin{figure}
    \begin{center}
\vspace{-.1in}
\scalebox{1}{
\input{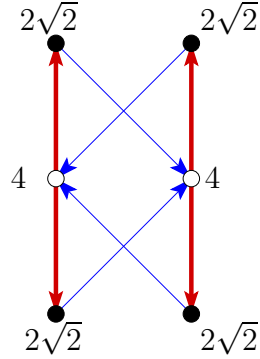} 
}
\vspace{-.1in}
    \end{center} 
    \caption{A fixed point of $Q(A_3 \times A_2)$.}
    \label{fig:zper9}
\end{figure}

An example of a fixed point for a recurrent quiver $Q(A_3 \times A_2)$ is given in Figure \ref{fig:zper9}.  
We say that a recurrent quiver has {\it {the fixed point property}} if there exists a fixed point $\rho:\VertQ\to\R_{>1}$.

\subsubsection{Tropical $T$-systems}

\def\l{{ \lambda}}
\def\Ttr{ \mathfrak{t}^\l}
\def\t{ \mathfrak{t}}

We define yet another system of algebraic relations associated with a bipartite recurrent quiver $Q$. 

Let $\l: \VertQ\to\R$ be any map. The \emph{tropical $T$-system} $\t^\l$ associated with $Q$ is a family $\Ttr_v(t)\in\R$ of real numbers satisfying the following relation:
\begin{equation}\label{eq:tropical}
\Ttr_v(t+1)+\Ttr_v(t-1)=\max\left(\sum_{u\to v}\Ttr_u(t),\sum_{v\to w}\Ttr_w(t)\right).
\end{equation}

As before, we impose some initial conditions:
\[\Ttr_v(\e_v)=\l(v)\]
for all $v\in\VertQ$ and we again consider only the subsystem defined for $t,v$ such that $t+\e_v$ is even.

It is easy to see that this system is just the \emph{tropicalization} of the $T$-system associated with $Q$ (see Section~\ref{sec:pertrop}).

\begin{example}\label{example:A3_A1}
 Let $Q$ be a Dynkin diagram of type $A_3\otimes A_1$ with $\VertQ=(a,b,c)$ and $\e_a=\e_c=0,\ \e_b=1$. Let $\l:\VertQ\to\R$ be defined by
 \[\l(a)=3,\ \l(b)=-2,\ \l(c)=7.\]
 Since both of the edges of the bigraph $G(Q)=(\Gamma,\emptyset)$ are of the same color, the defining recurrence (\ref{eq:tropical}) of the tropical $T$-system for $Q$ becomes just
 
\[\Ttr_v(t+1)+\Ttr_v(t-1)=\max\left(\sum_{(u,v)\in \Gamma}\Ttr_u(t),0\right).\]

 The values $\Ttr_v(t)$ of the tropical $T$-system $\Ttr$ are given in Table~\ref{tab:tropical_example}, defined only for $t+\e_v$ even.
 
\begin{table}
\centering
\begin{tabular}{|c|ccc|}\hline
\backslashbox{$t$}{$v$}     &  $a$  & $b$   & $c$   \\\hline
 $13$      &       & $-2$  &       \\\hline
 $12$      & $ 3$  &       & $ 7$  \\\hline
 $11$      &       & $12$  &       \\\hline
 $10$      & $ 9$  &       & $ 5$  \\\hline
 $9$       &       & $ 2$  &       \\\hline
 $8$       & $-7$  &       & $-3$  \\\hline
 $7$       &       & $-2$  &       \\\hline
 $6$       & $ 7$  &       & $ 3$  \\\hline
 $5$       &       & $12$  &       \\\hline
 $4$       & $ 5$  &       & $ 9$  \\\hline
 $3$       &       & $ 2$  &       \\\hline
 $2$       & $-3$  &       & $-7$  \\\hline
 $1$       &       & $-2$  &       \\\hline
 $0$       & $ 3$  &       & $ 7$  \\\hline
\end{tabular}
\caption{\label{tab:tropical_example}The values of $\Ttr_v(t)$.}
\end{table}

Here we can see that $\Ttr_v(t+12)=\Ttr_v(t)$ for all $v\in\VertQ$. This is a special case of the tropical version of Zamolodchikov periodicity (see Theorem~\ref{thm:main}), where $12=2(h(Q)+h'(Q))$ with $h(Q)=h(A_3)=4$ and $h'(Q)=h(A_1)=2$.
\end{example}

Let us say that a tropical $T$-system associated with a recurrent quiver $Q$ is {\it {periodic}} if we have $\Ttr_v(t+2N)=\Ttr_v(t)$ for some positive integer $N$ and for all $t\in\Z$ and $v\in\VertQ$ with $t+\e_v$ even. 

\subsection{The main theorem and the outline of the proof}

The following theorem is the main result of this paper. 

\begin{theorem} \label{thm:main}
 Let $Q$ be a bipartite recurrent quiver. Then the following are equivalent.
 \begin{enumerate}
  \item  \label{it:admissibleADE} $G(Q)$ is an admissible $ADE$ bigraph.
  \item  \label{it:strictlySubadditive} $Q$ has the strictly subadditive labeling property.
  \item  \label{it:fixedPoint} $Q$ has the fixed point property.
  \item \label{it:tropicalPeriod} The tropical $T$-system associated with $Q$ is periodic for any initial value $\l:\VertQ\to\R$.
  \item \label{it:birationalPeriod} The $T$-system associated with $Q$ is periodic. 
 \end{enumerate}
 In all cases, we have 
   \[T_v(t+2(h+h'))=T_v(t);\quad Y_v(t+2(h+h'))=Y_v(t);\quad \Ttr_v(t+2(h+h'))=\Ttr_v(t)\]
    for all $\l:\VertQ\to\R$ and for all $v\in\VertQ$, $t\in\Z$ with $t+\e_v$ even. Here $h=h(Q)$ (resp. $h'=h'(Q)$) is the common Coxeter number of all red (resp. blue) connected components of $G(Q)$ provided by Proposition~\ref{prop:coxeter}. 
\end{theorem}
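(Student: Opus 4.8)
The plan is to organize the five conditions into two triangles sharing~(\ref{it:admissibleADE}), beginning with the algebraic triangle (\ref{it:admissibleADE})$\Leftrightarrow$(\ref{it:strictlySubadditive})$\Leftrightarrow$(\ref{it:fixedPoint}). The implication (\ref{it:fixedPoint})$\Rightarrow$(\ref{it:strictlySubadditive}) is immediate: if $\rho$ is a fixed point, then $\nu:=\log\rho$ is positive and, since $\rho(v)^2$ is the \emph{sum} of the two positive products $\prod_{u\to v}\rho(u)$ and $\prod_{v\to w}\rho(w)$, it strictly exceeds each of them; taking logarithms yields the two strict subadditivity inequalities. For (\ref{it:strictlySubadditive})$\Rightarrow$(\ref{it:admissibleADE}), note that at each vertex the in-arrows and out-arrows are exactly its neighbors in the two colors of $G(Q)$, so a strictly subadditive labeling of $Q$ restricts to a Vinberg labeling of each of $\Gamma$ and $\Delta$; Theorem~\ref{thm:Vinberg} then forces every component of $\Gamma$ and of $\Delta$ to be $ADE$, while recurrence of $Q$ guarantees that $A_\Gamma$ and $A_\Delta$ commute. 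Conversely, for (\ref{it:admissibleADE})$\Rightarrow$(\ref{it:strictlySubadditive}) I would use that commuting matrices $A_\Gamma,A_\Delta$ all of whose components are $ADE$ share, by Proposition~\ref{prop:coxeter}, an everywhere-positive eigenvector $\nu$ with eigenvalues $2\cos(\pi/p)<2$ and $2\cos(\pi/q)<2$; this $\nu$ is visibly strictly subadditive.

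It remains to produce a fixed point, i.e.\ (\ref{it:strictlySubadditive})$\Rightarrow$(\ref{it:fixedPoint}). Writing the fixed-point equation in logarithmic coordinates $\sigma=\log\rho$ as $\sigma=G(\sigma)$, where $G(\sigma)_v=\tfrac12\log\big(e^{(A_\Gamma\sigma)_v}+e^{(A_\Delta\sigma)_v}\big)$, I would show that a strictly subadditive labeling $\nu$ makes $G$ a global contraction in the weighted norm $\|x\|_\nu=\max_v|x_v|/\nu_v$: each row of its Jacobian is $\tfrac12$ times a convex combination of the corresponding rows of $A_\Gamma$ and $A_\Delta$, and since $A_\Gamma\nu<2\nu$ and $A_\Delta\nu<2\nu$ coordinatewise this gives $\|G'\|_\nu\le c<1$ uniformly. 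The Banach fixed-point theorem then yields a unique $\rho=e^\sigma$. Finally the multiplicative map $\rho\mapsto\big(\prod_{u\to v}\rho(u)+\prod_{v\to w}\rho(w)\big)^{1/2}$ sends $\{\rho\ge\mathbf 1\}$ into $\{\rho\ge\sqrt2\,\mathbf 1\}$, so the fixed point satisfies $\rho>1$ everywhere, as the definition requires.

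The heart of the theorem is the dynamical triangle, and its hard edge is (\ref{it:admissibleADE})$\Rightarrow$(\ref{it:tropicalPeriod}): the tropical $T$-system of an admissible $ADE$ bigraph is periodic with period exactly $2(h+h')$. Here I would invoke Stembridge's classification and treat the families of Section~\ref{subsec:classification} in turn. For products $\Lambda\otimes\Lambda'$ the statement is the tropicalization of Keller's theorem~\cite{K}. For the four remaining infinite families---the twists and the $(AD^{m-1})_n$, $(A^{m-1}D)_n$, and $EE^{n-1}$ families---the strategy is to exploit that each is built from a product using the order-two correspondence $A_{2n-1}\leftrightarrow D_{n+1}$, $E_6\to E_6$ of Section~\ref{sec:ADEDynkin}, which preserves Coxeter numbers (Remark~\ref{remark:coxeter}). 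The construction of Section~\ref{sec:ADEtrop} should identify the tropical dynamics~\eqref{eq:tropical} on such a bigraph with a symmetry-equivariant part of the tropical dynamics on an auxiliary product of Dynkin diagrams, so that periodicity descends from the product case with the \emph{same} period $2(h+h')$, since $h$ and $h'$ are unchanged by folding. The eight exceptional bigraphs admit no such folding and are checked by direct computation. I expect this implication, and in particular the uniform folding argument for the four families, to be the main obstacle.

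For (\ref{it:tropicalPeriod})$\Rightarrow$(\ref{it:birationalPeriod}) I would lift tropical periodicity to birational periodicity. The birational $T$-system is subtraction-free and the relation~\eqref{eq:tropical} is precisely its tropicalization; running the separation-of-additions philosophy of Fomin--Zelevinsky, the $g$-vectors governing the Laurent expansions of the $T_v(t)$ are computed by the tropical system, and the $Y$-variables are likewise controlled tropically, so periodicity of the tropical system for every $\l$ forces periodicity of $T_v(t)$, and, via Remark~\ref{rem:na}, of $Y_v(t)$, with the same period. Finally, (\ref{it:birationalPeriod})$\Rightarrow$(\ref{it:admissibleADE}) closes the triangle by contraposition: if $G(Q)$ is not admissible then, recurrence already giving commutativity, some connected component of $\Gamma$ or $\Delta$ is a non-$ADE$ graph and hence has adjacency spectral radius $\ge 2$; feeding its Perron eigenvector into the initial data drives the tropical recurrence into the regime $a_{t+1}+a_{t-1}\ge 2a_t$, whose solutions are unbounded, and since $\log(a+b)\ge\max(a,b)$ the birational orbit is unbounded as well, contradicting periodicity. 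Tracking the constant $2(h+h')$ through the two triangles then yields the stated common period for the $T$-, $Y$-, and tropical systems.
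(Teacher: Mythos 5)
Your algebraic triangle (\ref{it:admissibleADE})$\Leftrightarrow$(\ref{it:strictlySubadditive})$\Leftrightarrow$(\ref{it:fixedPoint}) is essentially sound, and your Banach-contraction proof of (\ref{it:strictlySubadditive})$\Rightarrow$(\ref{it:fixedPoint}) (weighted sup-norm, Jacobian rows equal to one half of a convex combination of the corresponding rows of $A_\Gamma$ and $A_\Delta$) is a correct and genuinely different alternative to the paper's route in Section~\ref{sec:fixsub}, which instead runs a Tarski-style monotone iteration and Brouwer's theorem on the box $[\rho_0,\rho_1]$; your version even buys uniqueness of the fixed point. The dynamical part, however, has genuine gaps. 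The most serious one concerns twists: your claim that all four non-tensor families are ``built from a product using the order-two correspondence'' is false for $\Lambda\times\Lambda$. Twists exist for \emph{every} $ADE$ diagram, e.g.\ $E_8\times E_8$, whereas the correspondences of Section~\ref{sec:ADEDynkin} exist only for $A_{2n-1}$, $D_n$, $E_6$. Moreover, the one symmetry every twist does have, the swap of the two copies, fixes no vertex, so no $\delta_u$ is a symmetric labeling; and folding along the swap identifies $\Gamma$ with $\Delta$, yielding two graphs that share all their edges --- not a bigraph at all, let alone a tensor product of Dynkin diagrams. This is why the paper treats twists by a completely separate mechanism (Section~\ref{sec:twist}): the swapped mutations $\bar\mu_v$ satisfy the Coxeter relations of $\Lambda$ (checked locally via $A_2\otimes A_2$ periodicity with principal coefficients), giving Corollary~\ref{cor:twists}; only the remaining three families are handled by folding.

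Two further steps would fail as written. For (\ref{it:tropicalPeriod})$\Rightarrow$(\ref{it:birationalPeriod}), tropicalization is blind to scalar factors, $\operatorname{Newton}(c\,p)=\operatorname{Newton}(p)$, so tropical periodicity only pins down $T_v(t+2N)=c(v)\,T_v(t)$ for some $c(v)>0$; this is exactly what Proposition~\ref{prop:tropical_birational_summary} proves, and the paper then needs the fixed point property to force $c(v)=1$ --- the implication actually used is (\ref{it:fixedPoint})$+$(\ref{it:tropicalPeriod})$\Rightarrow$(\ref{it:birationalPeriod}). Your appeal to $g$-vectors would require both a proof that the tropical system computes $g$-vectors and Plamondon-type injectivity; neither is supplied (the patch is easy, though, since you have (\ref{it:fixedPoint}) in hand and can substitute a fixed point to kill the constant). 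Finally, your contrapositive $\neg$(\ref{it:admissibleADE})$\Rightarrow\neg$(\ref{it:birationalPeriod}) breaks in the affine case: if the non-$ADE$ component has spectral radius exactly $2$, feeding in its Perron eigenvector gives $a_{t+1}+a_{t-1}=2a_t$, whose solution with constant initial data is constant, hence periodic --- the paper's own remark about $A_2\otimes A_2$ with doubled arrows and $\lambda\equiv1$ is precisely such an example, so no contradiction with periodicity arises. Non-$ADE$ behavior is detected only by the special initial conditions $\delta_u$ that control Laurent degrees, and extracting a contradiction from those is the delicate period-averaging argument of Proposition~\ref{prop:tropical_subadditive} (Section~\ref{sec:tropsub}), which your route omits; the paper closes the cycle as (\ref{it:birationalPeriod})$\Rightarrow$(\ref{it:tropicalPeriod})$\Rightarrow$(\ref{it:strictlySubadditive})$\Rightarrow$(\ref{it:admissibleADE}) rather than by your direct contraposition.
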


Now, we are ready to formulate the plan of the paper. First, we discuss the connection between quivers, bigraphs and Stembirdge's definitions in Section \ref{sec:ADE}. 
Then, we prove Zamolodchikov periodicity for the case when $G(Q)$ is a twist $\Lambda\times \Lambda$ in Section~\ref{sec:twist}. After that, we proceed with the proof of the theorem according to the logic explained in Figure \ref{fig:zper10}. 

  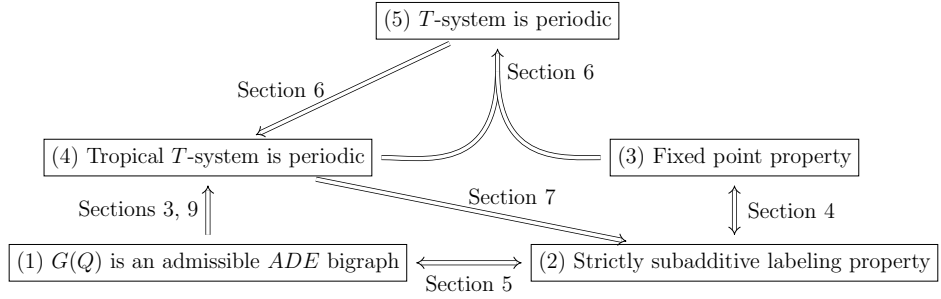
\begin{figure}
    \begin{center}
% \vspace{-.1in}
% \scalebox{0.98}{
% \input{zper10.pstex_t} 
% }
% \vspace{-.1in}

\scalebox{0.7}{
\begin{tikzpicture}[yscale=2]
 \node[anchor=south,draw,anchor=center] (5) at (0,5.3) {(5) $T$-system is periodic};
 \node[anchor=east,draw,anchor=center] (4) at (-5.5,4) {(4) Tropical $T$-system is periodic};
 \node[anchor=west,draw,anchor=center] (3) at (4.5,4) {(3) Fixed point property};
 \node[anchor=west,draw,anchor=center] (2) at (4.5,3) {(2) Strictly subadditive labeling property};
 \node[anchor=east,draw,anchor=center] (1) at (-5.5,3) {(1) $G(Q)$ is an admissible $ADE$ bigraph};
 
  \coordinate (zJoin) at ([yshift=-0.3cm]5.south);
 
        \draw[>=implies,->,double equal sign distance,shorten >= 5pt] (zJoin) -- (5) node [pos=0, above,anchor=west,inner sep=5pt] (S66) {Section~\ref{sec:pertrop}};
        \path (4) edge [out=0, in=270,>=implies,-,double equal sign distance,shorten <=5pt] (zJoin);
        \path (3) edge [out=180, in=270,>=implies,-,double equal sign distance,shorten <=5pt] (zJoin);
\draw[>=implies,->,double equal sign distance,shorten >= 5pt, shorten <=5pt] (5) -- (4) node [midway, above,anchor=east,inner sep=15pt] (S6) {Section~\ref{sec:pertrop}};
\draw[>=implies,->,double equal sign distance,shorten >= 5pt, shorten <=5pt] (1) -- (4) node [midway, above,anchor=east,inner sep=5pt] (S39) {Sections~\ref{sec:twist},~\ref{sec:ADEtrop}};
\draw[>=implies,<->,double equal sign distance,shorten >= 5pt, shorten <=5pt] (2) -- (3) node [midway, above,anchor=west,inner sep=7pt] (S4) {Section~\ref{sec:fixsub}};
\draw[>=implies,<->,double equal sign distance,shorten >= 5pt, shorten <=5pt] (2) -- (1) node [midway, below,anchor=north,inner sep=7pt] (S5) {Section~\ref{sec:subADE}};
\draw[>=implies,->,double equal sign distance,shorten >= 8pt, shorten <=8pt] (4) -- (2) node [pos=0.3, right,anchor=west,inner sep=35pt] (S7) {Section~\ref{sec:tropsub}};

\end{tikzpicture}}

    \end{center} 
    \caption{The plan of the proof.}
    \label{fig:zper10}
\end{figure}

In Sections \ref{sec:fixsub}, \ref{sec:subADE}, \ref{sec:pertrop}, and \ref{sec:tropsub} we give uniform proofs (that is, independent on Stembridge's classification) of all implications in Figure~\ref{fig:zper10} except for (\ref{it:admissibleADE})$\Longrightarrow$(\ref{it:tropicalPeriod}). In Section~\ref{sec:AA}, as an example, we apply our machinery to reprove the periodicity in the $AA$ case first shown by Volkov in~\cite{V}. Next, we introduce \emph{symmetric labeled bigraphs} in Section~\ref{subsec:involutions} which we then apply to give a case-by-case proof of (\ref{it:admissibleADE})$\Longrightarrow$(\ref{it:tropicalPeriod}) in Section~\ref{subsec:reduction}. Note that we do not reprove periodicity for tensor products of Dynkin diagrams, instead we reduce the remaining three infinite families from Stembridge's classification to the case of tensor products. 
Finally, we discuss the exceptional cases and the corresponding computer check in Appendix~\ref{sec:app}.

\begin{remark} \label{rem:na}
 It is well established in the literature that the periodicities of $T$-systems and that of $Y$-systems are equivalent. The logic consists of three steps.
 \begin{itemize}
  \item The periodicity of the $C$-matrix follows from the periodicity of $Y$-variables, essentially by definition. 
  \item The periodicity of the $G$-matrix is equivalent to the periodicity of $X$-variables, and in fact of the whole seed, as was shown in \cite[Theorem 5.1]{IIKKN1}, relying on the work of Plamondon \cite{Pl}.
  \item The $G$- and $C$-matrices are \emph{duals}, i.e. one of them is the transpose of the inverse of the other, see \cite[Theorem 4.1]{N}.
 \end{itemize}
We thank Tomoki Nakanishi for pointing this out to us, and for other useful comments on the draft of the paper. 
\end{remark}

\section{Quiver mutations, commuting Cartan matrices, and admissible $ADE$ bigraphs} \label{sec:ADE}

\subsection{Quiver mutations}\label{subsec:quiver_mutations}

A {\it {quiver}} $Q$ is a directed graph without loops or directed $2$-cycles. For a vertex $v$ of $Q$ one can define the \emph{quiver mutation $\mu_v$ at $v$} as follows:
\begin{itemize}
 \item for each pair of edges $u \rightarrow v$ and $v \rightarrow w$ create an edge $u \rightarrow w$;
 \item reverse the direction of all edges adjacent to $v$;
 \item if some directed $2$-cycle is present, remove both of its edges; repeat until there are no more directed $2$-cycles.
\end{itemize}

Let us denote the resulting quiver $\mu_v(Q)$. 

  \begin{figure}
    \begin{center}
\vspace{-.1in}
\scalebox{1}{
\input{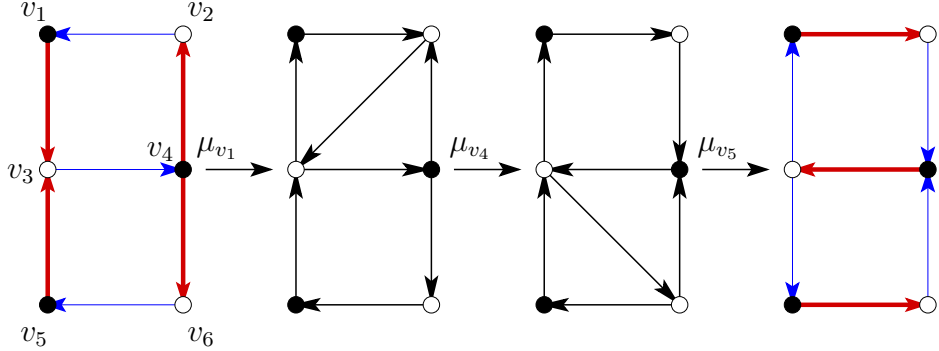} 
}
\vspace{-.1in}
    \end{center} 
    \caption{Three mutations of $Q(A_3 \otimes A_2)$.}
    \label{fig:zper12}
\end{figure}
Mutations in the case $A_3 \otimes A_2$ are illustrated in Figure \ref{fig:zper12}. 

Now, let $Q$ be a bipartite quiver. Recall that $\mu_\circ$ (resp., $\mu_\bullet$) is the simultaneous mutation at all white (resp., all black) vertices of $Q$. The following proposition is transparent from the definition.

\begin{proposition}
 For a bipartite quiver $Q$, if $\e_u\neq \e_v$ then the number of edges $u\to v$ in $Q$ equals the number of edges $v\to u$ in $\mu_\circ(Q)$. Otherwise, if $u,v$ are both black (resp., white), then the number of edges $u\to v$ in $\mu_\circ(Q)$ (resp., in $\mu_\bullet(Q)$) is equal to the number of directed paths of length $2$ from $u$ to $v$ in $Q$.
\end{proposition}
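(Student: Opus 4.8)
The plan is to realize the combined mutation $\mu_\circ$ as an ordinary sequence of mutations and to track edges by their incidence with white vertices. Fix an arbitrary ordering $w_1,\dots,w_k$ of the white vertices of $Q$. Since $Q$ is bipartite, the white vertices form an independent set, so no two of the $w_i$ are adjacent; consequently the mutations $\mu_{w_1},\dots,\mu_{w_k}$ pairwise commute and $\mu_\circ(Q)=\mu_{w_k}\cdots\mu_{w_1}(Q)$ is independent of the chosen order. The key structural observation is that a single mutation at a white vertex $w$ has only two effects on edges: it reverses the edges incident to $w$, and (via the first bullet of the mutation rule) it creates edges joining two \emph{black} vertices. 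In particular, it never alters an edge incident to another white vertex $w'\neq w$, since the directed $2$-cycles that arise and get cancelled involve only black--black edges. Hence at the moment we mutate $w_j$, the set of edges incident to $w_j$ is exactly the one it had in $Q$.

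For the case $\e_u\neq\e_v$, say $u$ black and $v=w$ white, every such edge is incident to exactly one white vertex, namely $w$. Thus it is reversed exactly once, at the step when we mutate $w$, and is untouched at every other step: the only edges ever created are black--black, and the reversal at $w$ turns $u\to w$ into $w\to u$ without producing a parallel edge in the opposite direction, so no $2$-cycle incident to $w$ ever forms. Therefore each edge $u\to v$ of $Q$ survives with its multiplicity and reversed orientation, giving $[u\to v]_Q=[v\to u]_{\mu_\circ(Q)}$, where $[\,\cdot\,]$ denotes the number of arrows.

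For the case of two black vertices $u,v$, the structural observation shows that when we mutate $w_j$ the first bullet of the mutation rule creates exactly $[u\to w_j]_Q\cdot[w_j\to v]_Q$ arrows $u\to v$, produced from the paths $u\to w_j\to v$ present in $Q$. Summing over all white vertices, the total number of arrows $u\to v$ created equals $\sum_{w}[u\to w]_Q[w\to v]_Q$, which is precisely the number $P_{uv}$ of directed length-$2$ paths from $u$ to $v$ in $Q$. Since every black--black edge is produced in this way, this gives the asserted count, modulo the $2$-cycle cancellation addressed below. The case of two white vertices and $\mu_\bullet$ is identical after exchanging the roles of the two colors.

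The one delicate point — and the step I expect to be the main obstacle — is this $2$-cycle cancellation in the second case: arrows $u\to v$ created at one white vertex may cancel against arrows $v\to u$ created at another, so the count is clean only once this is accounted for. The most transparent way to handle it is to pass to the skew-symmetric exchange matrix $B$, for which the simultaneous-mutation formula at the pairwise non-adjacent white vertices gives $B_{uv}(\mu_\circ Q)=\sum_{w}\frac12\bigl(|B_{uw}|B_{wv}+B_{uw}|B_{wv}|\bigr)=P_{uv}-P_{vu}$, where $P_{vu}$ counts the directed length-$2$ paths $v\to u$. The stated form is recovered whenever one of the two path counts vanishes, which is the situation relevant to recurrent quivers; I would phrase the final argument in this net form to make the bookkeeping unambiguous.
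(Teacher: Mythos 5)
Your proposal is correct, and it is worth noting that it is actually more careful than the source: the paper offers no argument at all for this proposition, introducing it with ``the following proposition is transparent from the definition.'' Your decomposition of $\mu_\circ$ into commuting single mutations, together with the inductive observation that every intermediate mutation only reverses edges at the mutated white vertex and creates black--black edges (so that each white vertex still carries its original star when its turn comes), is precisely the unpacking the paper leaves implicit, and your treatment of the mixed-color case and of the creation count $[u\to w_j]_Q\cdot[w_j\to v]_Q$ is right.

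More importantly, the subtlety you flagged is genuine: as literally stated, the black--black half of the proposition is false once both $P_{uv}>0$ and $P_{vu}>0$, because of $2$-cycle cancellation. A directed $4$-cycle $u\to w_1\to v\to w_2\to u$ is a bipartite (indeed recurrent) quiver with $P_{uv}=P_{vu}=1$, yet $\mu_\circ(Q)$ has no edge between $u$ and $v$. The precise statement is exactly your net form, $B_{uv}(\mu_\circ Q)=P_{uv}-P_{vu}$ (so the number of arrows $u\to v$ is $\max(P_{uv}-P_{vu},0)$), and this net form is what the paper actually uses: the corollary that follows characterizes recurrence by $P_{uv}=P_{vu}$ for same-colored pairs, which is consistent only with the net count, not with the literal statement. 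Your exchange-matrix computation for simultaneous mutation at pairwise non-adjacent vertices is correct and settles the bookkeeping cleanly. One correction to your closing sentence: the situation relevant to recurrent quivers is \emph{not} that one of the two path counts vanishes, but that the two counts are equal -- typically both nonzero, as already happens for tensor products such as $A_3\otimes A_2$ -- so that the net count vanishes and no black--black edges survive. Since your proof is phrased in the net form, this misstatement does not affect its validity, but the clause should be removed or amended.
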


\begin{corollary}
 A bipartite quiver $Q$ is recurrent if and only if for any two vertices $u,v$ of the same color, the number of directed paths of length $2$ from $u$ to $v$ equals the number of directed paths  of length $2$ from $v$ to $u$. 
\end{corollary}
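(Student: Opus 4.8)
The plan is to unwind the definition of a recurrent quiver and read the criterion off the preceding Proposition. Recall that $Q$ is recurrent precisely when each of $\mu_\circ(Q)$ and $\mu_\bullet(Q)$ equals the quiver $\bar Q$ obtained from $Q$ by reversing every arrow. Since $Q$ is bipartite, so is $\bar Q$, and therefore $\bar Q$ has no edge joining two equally-colored vertices. Thus the recurrence of $Q$ amounts to two conditions: (i) the arrows between oppositely-colored vertices are reversed, and (ii) no arrows between equally-colored vertices survive in $\mu_\circ(Q)$ or in $\mu_\bullet(Q)$.

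First I would dispose of (i): by the first assertion of the Proposition, for $\e_u \neq \e_v$ the number of arrows $u\to v$ in $Q$ equals the number of arrows $v\to u$ in $\mu_\circ(Q)$, and symmetrically for $\mu_\bullet$. Hence the oppositely-colored part of the quiver is automatically reversed and imposes no condition, so recurrence is equivalent to (ii) alone. For (ii) I would record the structural observation that mutating at a white vertex creates arrows only among its neighbors, all of which are black; moreover the white vertices remain pairwise non-adjacent throughout $\mu_\circ$, so each of them keeps only black neighbors as the mutations are performed. Consequently $\mu_\circ$ can produce edges only between black vertices, and dually $\mu_\bullet$ only between white ones.

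Now fix two black vertices $u,v$. By the second assertion of the Proposition, $\mu_\circ$ produces as many arrows $u\to v$ as there are directed length-$2$ paths from $u$ to $v$ in $Q$, and as many arrows $v\to u$ as there are length-$2$ paths from $v$ to $u$. Because a quiver carries no directed $2$-cycles, these oppositely directed arrows annihilate in pairs, and no arrow between $u$ and $v$ remains in $\mu_\circ(Q)$ if and only if the two path counts are equal. Ranging over all black pairs (through $\mu_\circ$) and all white pairs (through $\mu_\bullet$), condition (ii) holds exactly when the number of length-$2$ paths from $u$ to $v$ equals the number from $v$ to $u$ for every same-colored pair, which is the asserted criterion.

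I expect the only delicate point to be the $2$-cycle cancellation: the Proposition counts the arrows each simultaneous mutation \emph{creates} between same-colored vertices, and the substance of recurrence is that these creations cancel in opposed pairs — forcing the \emph{equality} of the two path counts rather than their vanishing. Everything else is a direct translation of the definition of recurrent quiver through the Proposition.
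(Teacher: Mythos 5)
Your proof is correct and is essentially the argument the paper intends: the corollary is stated as an immediate consequence of the preceding Proposition, and your unwinding of the definition of recurrence into the opposite-color part (handled by the first assertion) and the same-color part (handled by the second assertion plus $2$-cycle cancellation) is exactly that derivation, spelled out. You also correctly identify the one subtlety the paper glosses over — the Proposition's same-color count must be read as counting \emph{created} arrows before $2$-cycle removal, since after cancellation it is the equality (not vanishing) of the two path counts that characterizes recurrence.
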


Translating this property into the language of bigraphs yields the following characterization:
\begin{corollary}\label{cor:recurrent_commuting}
 A bipartite quiver $Q$ is recurrent if and only if the associated bipartite bigraph $G(Q)$ has commuting adjacency matrices $A_\Gamma,A_\Delta$. 
\end{corollary}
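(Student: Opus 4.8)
The plan is to prove Corollary~\ref{cor:recurrent_commuting} by combining the preceding Corollary with a direct combinatorial interpretation of the matrix product $A_\Gamma A_\Delta$ and its transpose. First I would recall that, by the Corollary immediately above, $Q$ is recurrent precisely when for every pair of same-colored vertices $u,v$ the number of directed length-$2$ paths from $u$ to $v$ equals the number of directed length-$2$ paths from $v$ to $u$. So the whole task reduces to translating this path-counting condition into the single matrix equation $A_\Gamma A_\Delta = A_\Delta A_\Gamma$.

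The key step is to understand what a directed length-$2$ path in $Q$ looks like in terms of the bigraph $G(Q)$. A directed path $u\to m\to v$ passes through an intermediate vertex $m$ whose color is opposite to that of $u$ and $v$ (since the underlying graph is bipartite). Recall from the definition of $G(Q)$ that an edge of $\Gamma$ records a directed edge going from color $0$ to color $1$, while an edge of $\Delta$ records a directed edge going from color $1$ to color $0$. Thus I would fix, say, $u,v$ both white ($\e_u=\e_v=0$); then the first step $u\to m$ is a $\Gamma$-edge (white to black) and the second step $m\to v$ is a $\Delta$-edge (black to white). Hence the number of directed length-$2$ paths from $u$ to $v$ equals $\sum_m (A_\Gamma)_{um}(A_\Delta)_{mv} = (A_\Gamma A_\Delta)_{uv}$. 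Symmetrically, the number of directed length-$2$ paths from $v$ to $u$ is $(A_\Gamma A_\Delta)_{vu}$; since $A_\Gamma,A_\Delta$ are symmetric (the bigraph is undirected), this equals $(A_\Delta A_\Gamma)_{uv}$. For $u,v$ both black one gets the same two products with the roles of $\Gamma$ and $\Delta$ interchanged, yielding the same conclusion.

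Putting these together, the recurrence condition for all same-colored pairs says exactly that $(A_\Gamma A_\Delta)_{uv} = (A_\Delta A_\Gamma)_{uv}$ for all $u,v$ lying in a common color class. For $u,v$ of \emph{opposite} colors both products vanish automatically: any length-$2$ walk returns to the starting color class, so the off-diagonal color blocks of $A_\Gamma A_\Delta$ and of $A_\Delta A_\Gamma$ are both zero. Therefore the matrix identity $A_\Gamma A_\Delta = A_\Delta A_\Gamma$ holds in every entry if and only if it holds on the same-color blocks, which is precisely the recurrence condition. This establishes the equivalence in both directions.

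I expect the only genuinely delicate point to be the bookkeeping of colors and the matching of $\Gamma$/$\Delta$ edges to the two steps of a directed path, together with the observation that symmetry of the adjacency matrices converts ``paths from $v$ to $u$'' into the transposed entry $(A_\Delta A_\Gamma)_{uv}$; everything else is a routine unwinding of definitions. One should just take care that the correspondence between bipartite quivers and bipartite bigraphs is used in the direction that respects the chosen bipartition $\e$, so that the role of $\Gamma$ as the ``$0\to 1$'' edges and $\Delta$ as the ``$1\to 0$'' edges is applied consistently when handling the white-white and black-black cases.
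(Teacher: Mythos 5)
Your proposal is correct and follows essentially the same route as the paper: the paper likewise derives the corollary from the preceding path-counting characterization of recurrence, observing that directed length-$2$ paths between same-colored vertices correspond to red-blue (resp.\ blue-red) paths in $G(Q)$, i.e.\ to entries of $A_\Gamma A_\Delta$ and $A_\Delta A_\Gamma$. Your write-up just spells out the details the paper leaves implicit (symmetry of the adjacency matrices converting ``paths from $v$ to $u$'' into the transposed product, and the automatic vanishing of the opposite-color blocks), all of which are handled correctly.
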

\begin{proof}
 This follows immediately from the observation that if $u$ and $v$ are both black (resp., white), then the number of directed paths of length $2$ from $u$ to $v$ in $Q$ equals the number of red-blue (resp., blue-red) paths of length $2$ from $u$ to $v$ in $G(Q)$.
\end{proof}

\subsection{A condition on $2\times 2$ partitions}\label{subsec:2by2}

In \cite[Definition~1.1]{S}, Stembridge requires an admissible $ADE$ bigraph to have a \emph{$2\times 2$ vertex partition}, that is, a pair of maps $c_\Gamma,c_\Delta:V\to \{0,1\}$ so that
\[(c_\Gamma(u),c_\Delta(u))=(1-c_\Gamma(v),c_\Delta(v))\quad \text{for all $(u,v)\in\Gamma$;}\] 
\[(c_\Gamma(u),c_\Delta(u))=(c_\Gamma(v),1-c_\Delta(v))\quad \text{for all $(u,v)\in\Delta$.}\] 
Let us introduce a temporary terminology and call such bigraphs \emph{admissible $2\times 2$ ADE bigraphs}.

\def\comp{ \operatorname{Comp}(\Gamma)}
\begin{proposition}
 Every admissible $ADE$ bigraph is an admissible $2\times 2$ ADE bigraph.
\end{proposition}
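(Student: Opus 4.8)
The plan is to reduce the existence of the pair $(c_\Gamma,c_\Delta)$ to a single balance condition and then to a spectral statement about the commuting matrices $A_\Gamma,A_\Delta$. It suffices to treat each connected component of the bigraph $G=(\Gamma,\Delta)$ separately, so assume $G$ is connected. Reading Stembridge's conditions, $c_\Gamma$ must flip across every red edge and stay constant across every blue edge, while $c_\Delta$ does the opposite. Moreover, once $c_\Gamma$ is found, $c_\Delta:=\e\oplus c_\Gamma$ automatically satisfies Stembridge's relations, using that the bipartition $\e$ flips across \emph{every} edge; dually, $c_\Gamma=\e\oplus c_\Delta$. Thus the statement is equivalent to constructing either one of them, say a function $c_\Delta\colon V\to\{0,1\}$ that is constant on each connected component of $\Gamma$ and is a proper $2$-coloring of $\Delta$. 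Equivalently, the $\{0,1\}$-valued cochain assigning $1$ to blue edges and $0$ to red edges must be a coboundary, i.e.\ every cycle of $G$ must contain an even number of blue edges. Since $\e$ already forces every cycle to have even length, this is the same as requiring that the signed graph with red edges weighted $+1$ and blue edges weighted $-1$ be \emph{balanced}.

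The key input is commutativity. As $G$ is connected, $A_G:=A_\Gamma+A_\Delta$ is irreducible, so Perron--Frobenius gives a simple top eigenvalue with strictly positive eigenvector $\zeta$. Since $A_\Gamma$ commutes with $A_G$, it preserves this line, whence $A_\Gamma\zeta=\gamma\zeta$ and $A_\Delta\zeta=\delta\zeta$ with $\gamma,\delta\ge 0$; restricting $\zeta$ to a red component shows $\zeta$ is its Perron vector and $\gamma=2\cos(\pi/p)$, recovering Proposition~\ref{prop:coxeter}, and likewise $\delta=2\cos(\pi/q)$. I would then reformulate balance spectrally. The existence of $c_\Delta$ is equivalent to the existence of a diagonal $\pm1$ matrix $S_\Delta$ commuting with $A_\Gamma$ and anticommuting with $A_\Delta$; applying $S_\Delta$ to $\zeta$ yields a common eigenvector with eigenvalues $(\gamma,-\delta)$. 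Conversely any common eigenvector for $(\gamma,-\delta)$ lies in $E_\gamma(A_\Gamma)=\bigoplus_R \R\,\zeta_R$, the sum over red components $R$ of the lines spanned by the restrictions $\zeta_R$ of $\zeta$ (extended by zero); such a vector has sign pattern constant on each red component, which is precisely the data of $c_\Delta$. Hence the proposition is equivalent to the assertion that $-\delta$ is an eigenvalue of the nonnegative symmetric matrix $N:=A_\Delta|_{E_\gamma(A_\Gamma)}$ written in the positive orthogonal basis $\{\zeta_R\}$. This $N$ is the adjacency matrix of a weighted graph $\mathcal H'$ on the red components, with a positive edge $R\!-\!R'$ whenever some blue edge joins $R$ to $R'$ (and a loop exactly when a blue edge lies inside a single red component). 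As $N$ is nonnegative and irreducible with spectral radius $\delta$, the value $-\delta$ is an eigenvalue if and only if $\mathcal H'$ is bipartite.

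The main obstacle is exactly to prove that $\mathcal H'$ is bipartite, equivalently that the red/blue signed graph is balanced: the reductions above show that bipartiteness of $G$ and commutativity of $A_\Gamma,A_\Delta$ have been fully used without yet settling it, so this is where the remaining force of the $ADE$ hypothesis must enter. I expect the resolution to exploit the Dynkin structure directly. In the tensor product case $N$ is literally the adjacency matrix of one of the Dynkin factors, hence visibly bipartite, and I would try to show in general that the quotient $N$ on red components is again of $ADE$ (thus bipartite) type --- for instance by producing a strictly subadditive labeling of $\mathcal H'$ through Theorem~\ref{thm:Vinberg}, built from the Perron data $\{\zeta_R\}$. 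A more combinatorial alternative is a minimal counterexample argument: take a shortest cycle with an odd number of blue edges, find a vertex where a red and a blue edge meet, and use the identity $A_\Gamma A_\Delta=A_\Delta A_\Gamma$ (which trades a red--blue corner for a blue--red corner through a common neighbor) to reroute or split the cycle, invoking the forest structure of $\Gamma$ and $\Delta$ to delete backtracking. The delicate point, and genuine crux, is the case where the exchanged vertex already lies on the cycle, which must be arranged to yield a strictly shorter odd cycle rather than another of the same length. Failing a uniform argument, bipartiteness of $\mathcal H'$ can be verified against Stembridge's explicit list of admissible families.
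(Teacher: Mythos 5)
Your reduction is sound and in fact mirrors the paper's own construction: once you know that the graph on red components induced by blue edges (your $\mathcal H'$, the paper's component graph $\operatorname{Comp}(\Gamma)$) is loop-free and bipartite, taking $c_\Delta=\tau$ to be the resulting $2$-coloring and $c_\Gamma=\e\oplus c_\Delta$ gives the $2\times 2$ partition, exactly as the paper does. But the one claim you leave unproved --- that $\mathcal H'$ is bipartite, equivalently that the red/blue signed graph is balanced --- is the entire content of the proposition, and you say so yourself (``the main obstacle,'' ``genuine crux''). None of your three suggested routes closes it: the spectral reformulation via Perron--Frobenius only restates bipartiteness as ``$-\delta$ is an eigenvalue of $N$''; the minimal-counterexample rerouting is abandoned at its delicate case; and the fallback of checking against Stembridge's explicit list is circular, because Stembridge's classification is a classification of bigraphs that are \emph{already assumed} to carry the $2\times 2$ partition (it is part of his definition of admissibility), while the whole point of this proposition is to show that the paper's weaker definition implies his, so that his classification may legitimately be invoked.

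The paper fills precisely this gap with a single citation: Stembridge's Lemma 2.5(b) (quoted as Lemma~\ref{lemma:acyclic} in the paper) states that if $(\Gamma,\Delta)$ has commuting adjacency matrices and $\Delta$ is acyclic, then $\operatorname{Comp}(\Gamma)$ is acyclic. Here $\Delta$ is automatically acyclic because each of its connected components is an $ADE$ Dynkin diagram, i.e.\ a tree. Acyclicity of $\operatorname{Comp}(\Gamma)$ is strictly stronger than the bipartiteness/balance you need; in particular it also disposes of your ``loop'' case of a blue edge lying inside a single red component, which your spectral setup flags but never excludes. So the statement you were missing is a purely combinatorial consequence of commutativity together with acyclicity of $\Delta$ alone --- no Perron--Frobenius theory, no finer Dynkin data, and no appeal to the classification are needed --- and without it (or a substitute proof of balance) your argument does not establish the proposition.
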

\begin{proof}
 Following \cite[Section 2]{S}, the \emph{component graph} $\comp$ of $\Gamma$ is a simple graph whose vertices are connected components of $\Gamma$, and two of them are connected by an edge in $\comp$ if and only if there is an edge of $\Delta$ between the two corresponding connected components.
 \begin{lemma}[{\cite[Lemma~2.5(b)]{S}}]\label{lemma:acyclic}
  Suppose $(\Gamma, \Delta)$ is a bigraph with commuting adjacency matrices such that $\Delta$ is acyclic. Then the $\comp$ is acyclic.
 \end{lemma}
 Using this lemma, it becomes easy to construct the $2\times 2$ partition $(c_\Gamma,c_\Delta)$. Namely, since the component graph of $\Gamma$ is acyclic, it is bipartite. Let $\tau:V\to \{0,1\}$ be such that $\tau(v)=1$ if and only if $v$ is contained in the connected component of $\Gamma$ that corresponds to a black vertex of $\comp$. Then for any $v\in V$ we can put 
 \begin{eqnarray*}
 c_\Gamma(v)&\equiv& \tau(v)+\e_v\pmod 2;\\
 c_\Delta(v)&=& \tau(v).
 \end{eqnarray*}

 Thus if $(u,v)\in\Gamma$ then $\tau(u)=\tau(v)$ while $\e_u\neq \e_v$. Similarly, if $(u,v)\in\Delta$ then $\tau(u)\neq \tau(v)$ as well as $\e_u\neq \e_v$ so $c_ \Gamma(u)=c_ \Gamma(v)$.

\end{proof}

 \subsection{Reformulation of the dynamics in terms of bigraphs}\label{subsec:bigraph_quiver_translation}

  Let $G=(\Gamma,\Delta)$ be a bipartite bigraph with a vertex set $V$. Then the associated $T$-system for $G$ is defined as follows:
 \begin{eqnarray*}
 T_v(t+1)T_v(t-1)&=&\prod_{(u,v)\in\Gamma} T_u(t)+\prod_{(v,w)\in\Delta} T_w(t);\\
 T_v(\e_v) &=& x_v.
 \end{eqnarray*}
 It is easy to see that this system is equivalent to the $T$-system defined for $Q(G)$. The $Y$- and tropical $T$-systems for $G$ are defined in a similar way: replace $u\to v$ (resp., $v\to w$) by $(u,v)\in\Gamma$ (resp., by $(v,w)\in\Delta$) in the corresponding definition.

\section{Twists} \label{sec:twist}

\def\mupair{\widetilde{\mu}}
A \emph{labeled quiver} is by definition a pair $(Q,\rho)$ where $Q$ is a quiver and $\rho:\VertQ\to\R_{>0}$ is a labeling of its vertices by positive numbers. Let $V$ be any set, and we would like to consider all labeled quivers $(Q,\rho)$ with vertex set $V$. Given a vertex $v\in V$, one can define an operation $\mupair_v$ acting on labeled quivers with vertex set $V$. Namely, $\mupair_v(Q,\rho)=(\mu_v(Q),\rho')$ where $\rho'(u)=\rho(u)$ for all $u\neq v$, and 
\[\rho'(v)\rho(v)=\prod_{u\to v} \rho(u)+\prod_{v\to w}\rho(w).\]
When $Q$ is a bipartite recurrent quiver, we define $\mupair_\circ$ (resp., $\mupair_\bullet$) to be the product of $\mupair_v$ over all white (resp., black) vertices of $Q$. And then the $T$-system can be viewed as a repeated application of $\mupair_\circ\mupair_\bullet$ to some initial value labeling.

\def\spair{s}
\def\QL{Q_\Lambda}
Let $\Lambda$ be a Dynkin diagram, and consider its twist $\Lambda \times \Lambda$. Define the quiver $\QL$ to be $Q(\Lambda \times \Lambda)$. For each node $v$ of $\Lambda$, let $v_1$ and $v_2$ be the two corresponding nodes in $\QL$. Define $\spair_v$ to be the operation on labeled quivers of ``swapping'' the vertices $v_1$ and $v_2$. More precisely, if $(Q,\rho)$ is a labeled quiver with vertex set $\Vert(\QL)$, then $\spair_v(Q,\rho)=(Q',\rho')$ is defined as follows:

\begin{itemize}
 \item $\rho'(u)=\rho(u)$ for all $u\in \Vert(\QL)\setminus\{v_1,v_2\}$;
 \item $\rho'(v_1)=\rho(v_2);\quad \rho'(v_2)=\rho(v_1)$;
 \item $(u\to v_1)$ is an edge of $Q$ if and only if $(u\to v_2)$ is an edge of $Q'$. Similarly for the edges $(u\to v_2),(v_1\to w), (v_2\to w)$. All other edges are the same for $Q$ and $Q'$.
\end{itemize}

Now, since $v_1$ and $v_2$ are not connected by an edge in $\QL$, the mutations $\mupair_{v_1}$ and $\mupair_{v_2}$ applied to $(\QL,\rho)$ commute with each other. This allows us to define the following operation:
\def\mubar{\bar\mu}
\[\mubar_v = \spair_v \circ \mupair_{v_1} \circ \mupair_{v_2}.\] 
Informally, we mutate at $v_1$ and $v_2$, and then swap them in the quiver. 

\begin{prop}
If $\mubar_v(\QL,\rho)=(Q',\rho')$ then $Q'=\QL$. In other words, the operations $\mubar_v$ preserve the quiver $\QL$.
\end{prop}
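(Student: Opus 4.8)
The plan is to track only the underlying quiver, since the statement says nothing about the labeling. Because $\mupair_x$ acts on the quiver exactly as the ordinary mutation $\mu_x$, it suffices to show that $\spair_v(\mu_{v_1}(\mu_{v_2}(\QL)))=\QL$. First I would fix the bipartition of $\QL$ in which $v_1$ and $v_2$ inherit the color of $v$ in $\Lambda$; then $v_1$ and $v_2$ are non-adjacent, so $\mu_{v_1}$ and $\mu_{v_2}$ commute and may be applied in either order. Reading off the twist construction, the neighborhoods of $v_1$ and of $v_2$ coincide and equal $\{u_1,u_2 : u\in N(v)\}$, where $N(v)$ is the set of neighbors of $v$ in $\Lambda$: for each such $u$, the vertex $u_1$ is a red neighbor of $v_1$ and a blue neighbor of $v_2$, while $u_2$ is a red neighbor of $v_2$ and a blue neighbor of $v_1$. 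Assuming $v$ is white, the four arrow types in this star are $v_1\to u_1$, $u_2\to v_1$, $v_2\to u_2$, $u_1\to v_2$.

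The essential input is that $\Lambda$, being an $ADE$ Dynkin diagram, is a tree, so $N(v)$ is an independent set; hence the vertices $\{u_1,u_2:u\in N(v)\}$ span no edge of $\QL$. I would then compute the two mutations in turn. Mutating at $v_2$ reverses the arrows incident to $v_2$ and, by the path-creation step of mutation, creates exactly the arrows $a_1\to b_2$ for all $a,b\in N(v)$ (joining each in-neighbor $a_1$ to each out-neighbor $b_2$); no $2$-cycle appears because the neighborhood was edge-free. Mutating at $v_1$ afterward leaves the arrows incident to $v_2$ untouched, reverses the arrows incident to $v_1$, and creates the arrows $a_2\to b_1$ for all $a,b\in N(v)$.

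The heart of the argument, and the step I expect to require the most care, is the $2$-cycle bookkeeping: every arrow $a_1\to b_2$ created by $\mu_{v_2}$ cancels against the arrow $b_2\to a_1$ created by $\mu_{v_1}$, and this pairing exhausts all of the created arrows, so none survive among the neighbors. The only net effect of $\mu_{v_1}\mu_{v_2}$ on $\QL$ is therefore that the arrows incident to $v_1$ and to $v_2$ get reversed. Verifying that this cancellation is exact (no leftover or doubled arrows, and no interference between the two mutations) is precisely where the independence of $N(v)$ and the simplicity of the edges of an $ADE$ diagram are used.

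Finally, applying $\spair_v$ interchanges $v_1$ and $v_2$ together with their incident arrows. A direct check on the four arrow types shows that reversing the arrows at $v_1,v_2$ and then swapping $v_1\leftrightarrow v_2$ restores each arrow to an arrow of $\QL$ (for instance $v_1\to u_1$ becomes $u_1\to v_1$ after mutation and then $u_1\to v_2$ after the swap, which is again an arrow of $\QL$). Since $\mu_{v_1}$, $\mu_{v_2}$, and $\spair_v$ alter only arrows inside the star of $v$—all neighbors of $v_1,v_2$ lie in $\{u_1,u_2:u\in N(v)\}$, and the swap keeps every arrow inside this star—the remainder of the quiver is unchanged, and we conclude $Q'=\QL$. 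The case of black $v$ follows by reversing all arrows, using that the twist bigraph is self-dual.
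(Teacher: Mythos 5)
Your proof is correct and follows essentially the same route as the paper's own (much terser) proof: the key step in both is that every arrow $a_1\to b_2$ created by mutating at one of $v_1,v_2$ is cancelled in a directed $2$-cycle by the arrow $b_2\to a_1$ created at the other (the paper phrases this as the correspondence between pairs of arrows $u\to v_1\to w$ and $w\to v_2\to u$), after which the reversal of the arrows incident to $v_1,v_2$ is undone by the swap $s_v$. Your additional bookkeeping, namely that $N(v)$ is an independent set because $\Lambda$ is a tree so the cancellation is exact, and the separate treatment of a black vertex $v$ via global arrow reversal, simply makes explicit what the paper leaves implicit.
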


\begin{proof}
 For each pair of arrows $u \rightarrow v_1 \rightarrow w$ there exists the corresponding pair of arrows $w \rightarrow v_2 \rightarrow u$ and vice versa. Thus, no new arrows are created. The directions of arrows adjacent to $v_1$ and $v_2$ are reversed, however 
 applying $\spair_u$ recovers the original directions. 
\end{proof}

\begin{theorem}
The operations $\mubar_v$ satisfy the relations of the Coxeter group defined by $\Lambda$. Specifically, $(\mubar_u \circ \mubar_w)^3=id$ if $u$ and $w$ are adjacent in $\Lambda$,  and $(\mubar_u \circ \bar \mu_w)^2=id$ otherwise.
\end{theorem}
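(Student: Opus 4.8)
The plan is to verify the three families of relations that present the simply-laced Coxeter group of $\Lambda$: the involution relations $\mubar_v^2=\mathrm{id}$, the commutation relations $(\mubar_u\circ\mubar_w)^2=\mathrm{id}$ for non-adjacent $u,w$, and the braid relations $(\mubar_u\circ\mubar_w)^3=\mathrm{id}$ for adjacent $u,w$. By the preceding proposition each $\mubar_v$ preserves $\QL$, so throughout I regard every $\mubar_v$ as a birational transformation of the labelings $\rho$ of the fixed quiver $\QL$. The first two families are formal. Quiver mutation is an involution, and $\mupair_{v_1},\mupair_{v_2}$ commute because $v_1,v_2$ are non-adjacent in $\QL$; moreover $\spair_v$ is an involution whose conjugation action interchanges the two mutations, i.e. $\spair_v\circ\mupair_{v_1}\circ\spair_v=\mupair_{v_2}$. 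Hence $\mubar_v^2=\spair_v\mupair_{v_1}\mupair_{v_2}\spair_v\mupair_{v_1}\mupair_{v_2}=\mupair_{v_2}\mupair_{v_1}\mupair_{v_1}\mupair_{v_2}=\mathrm{id}$. For non-adjacent $u,w$ there are no edges of $\QL$ between $\{u_1,u_2\}$ and $\{w_1,w_2\}$, so every operation constituting $\mubar_u$ commutes with every operation constituting $\mubar_w$; thus $\mubar_u\mubar_w=\mubar_w\mubar_u$, and combined with the involution relation this yields $(\mubar_u\mubar_w)^2=\mubar_u^2\mubar_w^2=\mathrm{id}$.

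The substance of the theorem is the braid relation for adjacent $u,w$. Here I would first observe that $\mubar_u$ alters only the labels at $u_1,u_2$ and $\mubar_w$ only those at $w_1,w_2$, and that the induced subquiver on these four vertices is an oriented $4$-cycle $u_1\to w_1\to u_2\to w_2\to u_1$ (there are no chords, since $u_1,u_2$ and $w_1,w_2$ are monochromatic pairs). All other labels enter the update formulas only through four fixed positive constants $a,a',b,b'$, namely the products of the labels of the remaining neighbours of $u$ and of $w$ in the two copies. The crucial simplification is that both $\mubar_u$ and $\mubar_w$ preserve the two ratios $\rho(u_1)/\rho(u_2)$ and $\rho(w_1)/\rho(w_2)$: indeed $\mubar_u$ replaces $(\rho(u_1),\rho(u_2))$ by $(N/\rho(u_2),\,N/\rho(u_1))$ for a common numerator $N=\prod_{z\sim u}\rho(z_1)+\prod_{z\sim u}\rho(z_2)$, which leaves $\rho(u_1)/\rho(u_2)$ unchanged, and it fixes $w_1,w_2$ entirely; the situation for $\mubar_w$ is symmetric. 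Freezing these two conserved ratios as additional constants collapses the rank-$4$ system to a rank-$2$ recurrence of the form $\mubar_u\colon(q,s)\mapsto(As/q,\,s)$ and $\mubar_w\colon(q,s)\mapsto(q,\,Bq/s)$, where $A,B$ are constants assembled from $a,a',b,b'$ and the frozen ratios. A direct six-step computation then shows $(\mubar_u\circ\mubar_w)^3=\mathrm{id}$, and this holds identically in $A$ and $B$.

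The main obstacle is precisely this braid relation, and it is entirely defused by the conservation of the ratios $\rho(u_1)/\rho(u_2)$ and $\rho(w_1)/\rho(w_2)$: without this observation one faces a genuinely four-variable birational identity, whereas with it the verification reduces to the transparent rank-$2$ calculation above. I expect the only delicate point in writing this up to be bookkeeping of edge orientations in the $4$-cycle under the chosen bipartition, which fixes the exact numerators $N$ but does not affect the final period.
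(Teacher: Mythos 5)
Your proof is correct, and it takes a genuinely different route from the one the paper elaborates. For the braid relation the paper merely remarks that it ``can be seen to hold via a short direct computation'' and then develops an alternative: the relation for $\Lambda=A_2$ is exactly the $A_2\otimes A_2$ instance of Zamolodchikov periodicity, which is verified with principal coefficients, and \cite[Theorem 4.6]{FZ2} then upgrades it to arbitrary coefficients, with the surrounding vertices of $\QL$ playing the role of coefficients. You instead carry out the direct computation, and your conserved-ratio observation is precisely what makes it short: because $u_1,u_2$ are non-adjacent and of the same colour, the mutations at $u_1$ and $u_2$ share the common numerator $N_u=\prod_{z\sim u}\rho(z_1)+\prod_{z\sim u}\rho(z_2)$, so $\mubar_u$ acts by $(\rho(u_1),\rho(u_2))\mapsto(N_u/\rho(u_2),N_u/\rho(u_1))$ and preserves $\rho(u_1)/\rho(u_2)$; with both ratios and the outside labels frozen, everything collapses to $(q,s)\mapsto(As/q,s)$ and $(q,s)\mapsto(q,Bq/s)$, and the six-step verification indeed closes up: $(q,s)\mapsto(AB/s,\,Bq/s)\mapsto(As/q,\,AB/q)\mapsto(q,s)$. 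The orientation bookkeeping you flag works out exactly as you expect: in $\QL$ the induced subquiver on $u_1,w_1,u_2,w_2$ is the oriented four-cycle of Figure~\ref{fig:zper15}, and at each $u_i$ the red neighbours form one of the in/out-sets and the blue neighbours the other, which is all that the common-numerator claim needs. The trade-off between the two arguments: the paper's route requires no computation beyond the two-vertex case but leans on cluster-algebra machinery (principal coefficients and the separation formula), while yours is elementary and self-contained, and in addition exhibits explicit conserved quantities of the dynamics. Your treatment of the involution and commutation relations agrees with the paper's, which dismisses those cases as trivial.
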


  \begin{figure}
    \begin{center}
\vspace{-.1in}
\scalebox{0.9}{
\input{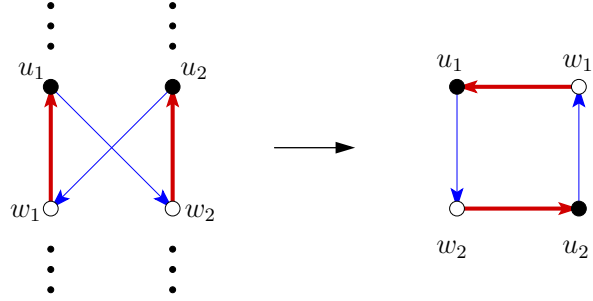} 
}
\vspace{-.1in}
    \end{center} 
    \caption{The local part of $\Lambda \times \Lambda$ as the $Q(A_2 \otimes A_2)$ quiver.}
    \label{fig:zper15}
\end{figure}

\begin{proof}
 The case of $u$ and $w$ not adjacent is trivial. Assume $u$ and $w$ are adjacent. The relation $(\bar \mu_u \circ \bar \mu_w)^3=id$ can be seen to hold via a short direct computation. Alternatively, note that it holds when $u$ and $w$ are the only two vertices of $\Lambda=A_2$: in this case it is just the $A_2 \otimes A_2$ case of Zamolodchikov periodicity, 
 which can be easily verified to hold with principal coefficients.
 By \cite[Theorem 4.6]{FZ2} this implies that the relation holds with arbitrary choice of coefficients, in particular with all the other vertices surrounding $u_1,u_2,w_1,w_2$ in $\QL$.
\end{proof}

\begin{corollary}\label{cor:twists}
 Both $T$-system and tropical $T$-system versions of periodicity hold for twists $\Lambda \times \Lambda$.
\end{corollary}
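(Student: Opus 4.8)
The plan is to leverage the Coxeter-group action of the operations $\mubar_v$ established in the preceding theorem, turning the $T$-system evolution into a power of a Coxeter element whose order is governed by the Coxeter number. First I would organize the generators bipartitely. Since $\Vert(\Lambda)=\Lambda_0\sqcup\Lambda_1$ and no two vertices of the same part are adjacent in $\Lambda$, the preceding theorem gives $(\mubar_u\mubar_w)^2=\mathrm{id}$ and $\mubar_v^2=\mathrm{id}$ whenever $u,w$ lie in the same part; hence the products $\mubar_\circ:=\prod_{v\in\Lambda_0}\mubar_v$ and $\mubar_\bullet:=\prod_{v\in\Lambda_1}\mubar_v$ are well-defined commuting products of involutions, and $c:=\mubar_\circ\mubar_\bullet$ is the image of a bipartite Coxeter element of the Coxeter group $W(\Lambda)$. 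Because $\Lambda$ is of finite ($ADE$) type, every Coxeter element has order equal to the Coxeter number $h=h(\Lambda)$, so the theorem yields $c^{\,h}=\mathrm{id}$ as an operation on labeled quivers with vertex set $\Vert(\QL)$.

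Next I would separate the swaps from the mutations. Writing $s_\circ:=\prod_{v\in\Lambda_0}\spair_v$ and $s_\bullet:=\prod_{v\in\Lambda_1}\spair_v$, the fact that the pieces $\spair_v,\mupair_{v_1},\mupair_{v_2}$ belonging to distinct nodes act on disjoint vertices lets me collect $\mubar_\circ=s_\circ\,\mupair_\circ$ and $\mubar_\bullet=s_\bullet\,\mupair_\bullet$. The swap $s_\bullet$ fixes every white vertex, so conjugation by $s_\bullet$ permutes the white mutations trivially, i.e. $\mupair_\circ s_\bullet=s_\bullet\mupair_\circ$; therefore
\[ c=\mubar_\circ\mubar_\bullet=s_\circ\mupair_\circ s_\bullet\mupair_\bullet=S\,\Phi,\qquad S:=s_\circ s_\bullet,\quad \Phi:=\mupair_\circ\mupair_\bullet. \]
Here $\Phi$ is exactly one step (i.e. $t\mapsto t+2$) of the $T$-system evolution on $\QL$, and $S$ is the global involution swapping $v_1\leftrightarrow v_2$ for every node $v$ of $\Lambda$. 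Since $S$ preserves each color class it commutes with $\mupair_\circ$ and with $\mupair_\bullet$, hence with $\Phi$, and $S^2=\mathrm{id}$. From $c^{\,h}=\mathrm{id}$ and $[S,\Phi]=1$ I then get $\Phi^{2h}=S^{-2h}(S\Phi)^{2h}=S^{-2h}c^{2h}=\mathrm{id}$, using $S^2=\mathrm{id}$ and $c^{2h}=(c^{\,h})^2=\mathrm{id}$; note that no parity assumption on $h$ is needed.

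Finally, $\Phi^{2h}=\mathrm{id}$ on labeled quivers says precisely that the $T$-system on $\QL$ satisfies $T_v(t+4h)=T_v(t)$ for every initial labeling. Since for a twist both $\Gamma$ and $\Delta$ are two copies of $\Lambda$, we have $h(Q)=h'(Q)=h(\Lambda)$ and $4h=2(h+h')$, giving exactly the period in the main theorem. For the tropical statement I would observe that all ingredients are subtraction-free: each $\mupair_v$ updates a label by $(\prod+\prod)/\rho(v)$ and each $\spair_v$ merely relabels, so every relation used above—in particular $c^{\,h}=\mathrm{id}$—is an identity in the universal semifield. Applying the tropicalization homomorphism $(\times,+,/)\mapsto(+,\max,-)$ converts these into the corresponding identities for the tropical operations, and the identical computation yields $\Ttr_v(t+4h)=\Ttr_v(t)$.

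The main obstacle I anticipate is the bookkeeping in the middle step: correctly disentangling the interleaved swaps $\spair_v$ from the mutations $\mupair_{v_i}$ so that $c$ factors as $S\Phi$ with $S$ and $\Phi$ commuting, and then checking that raising to the power $2h$ annihilates $S$ and reproduces precisely the period $2(h+h')$ rather than a spurious divisor or multiple. A secondary, milder point is to confirm that the birational Coxeter relations genuinely hold in the universal semifield (which is exactly what the separation-of-additions input behind the preceding theorem provides), so that tropicalization is legitimate.
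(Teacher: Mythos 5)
Your proof is correct and is essentially the paper's own argument: you factor one step of the $T$-system evolution as a global swap composed with the (image of a) bipartite Coxeter element $\bar\mu_\Lambda$, invoke that this element has order $h$ in finite type while the swap is a commuting involution, and conclude a period of $2\cdot 2h=2(h+h')$. The only cosmetic difference is the tropical step, where the paper simply cites its general principle that birational periodicity implies tropical periodicity (Proposition~\ref{prop:tropical_birational_summary}), whereas you re-derive that principle by tropicalizing subtraction-free identities in the universal semifield; both routes are valid.
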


\begin{proof}
 The time evolution of the $T$-system is the action of 
 \[\mupair_{\circ} \mupair_{\bullet} = \left(\prod_{v \in \Lambda} \spair_v \right) \circ \mubar_\Lambda,\] 
 where 
 \[\mubar_\Lambda = \prod_{\e_v=0} \mubar_v \prod_{\e_v=1} \mubar_v\]

 acts as the Coxeter element in the Coxeter group associated with $\Lambda$. 
 Therefore, $(\mubar_\Lambda)^h = id$, where $h$ is the Coxeter number of $\Lambda$. Taking into account the additional vertex swapping operator $\prod_{v \in \Lambda} \spair_v$ applied at each step that commutes with $\mubar_\Lambda$, we have  
 \[(\mupair_{\circ} \mupair_{\bullet})^{2h} = \left(\prod_{v \in \Lambda} \spair_u  \circ \mubar_\Lambda\right)^{2h}=id,\]
 where the $2$ in the exponent is needed if $h$ is odd and not needed if $h$ is even.
 
 The fact that the same remains true in the tropical $T$-system follows from the general principle that the periodicity of the birational dynamics implies the periodicity of the tropical dynamics, see Proposition \ref{prop:tropical_birational_summary}. 
\end{proof}

\section{Fixed point property $\Longleftrightarrow$ strictly subadditive labeling property} \label{sec:fixsub}

\def\vu{{\nu}}
 Let $Q$ be a bipartite quiver with vertex set $V:=\VertQ$.
 
 Recall from Section \ref{subsec:fixpt}  that a map $\rho:V\to \R_{>1}$ is a \emph{fixed point} if for any $v\in V$, 
 \[\rho(v)^2=\prod_{u\to v}\rho(u)+\prod_{v\to w}\rho(w).\]
 Recall from Section \ref{subsec:subadditive} that a map $\nu:V\to\R_{>0}$ is a \emph{strictly subadditive labeling} if for any $v\in \VertQ$,
 \[2\nu(v)>\sum_{u\to v}\nu(u);\quad 2\nu(v)>\sum_{v\to w}\nu(w).\]
   
\begin{proposition}\label{prop:fixpt_subadditive}
There exists a fixed point for $Q$ if and only if there exists a strictly subadditive labeling for $Q$.
\end{proposition}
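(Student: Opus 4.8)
The plan is to establish the equivalence by relating the fixed-point equation, which involves products, to the subadditive inequality, which involves sums, via the natural logarithm. Given a fixed point $\rho:V\to\R_{>1}$, I would set $\nu(v):=\log\rho(v)$, so that $\nu(v)>0$ for all $v$. The fixed-point condition $\rho(v)^2=\prod_{u\to v}\rho(u)+\prod_{v\to w}\rho(w)$ says that $e^{2\nu(v)}$ equals a sum of two strictly positive terms, each of which therefore is strictly smaller than $e^{2\nu(v)}$. Taking logarithms gives $2\nu(v)>\sum_{u\to v}\nu(u)$ and $2\nu(v)>\sum_{v\to w}\nu(w)$, exactly the strictly subadditive inequalities. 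So this direction is immediate and requires essentially no work.

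The harder direction is the converse: producing a fixed point from a strictly subadditive labeling. Here the difficulty is that the subadditive inequalities are strict but only give a \emph{one-sided} bound, whereas the fixed point requires an exact equality. The natural approach is a fixed-point / continuity argument. Starting from a strictly subadditive $\nu$, I would rescale it, writing $\rho_s(v):=e^{s\,\nu(v)}$ for a scaling parameter $s>0$, and study the defect function
\[
F_v(\rho):=\prod_{u\to v}\rho(u)+\prod_{v\to w}\rho(w)-\rho(v)^2.
\]
For the labeling $\rho_s$ the strict subadditivity guarantees that for large $s$ each exponential product $\prod_{u\to v}\rho_s(u)=e^{s\sum_{u\to v}\nu(u)}$ is dominated by $\rho_s(v)^2=e^{2s\nu(v)}$, so that $F_v(\rho_s)<0$ for $s$ large; conversely, as $s\to 0^+$ all values tend to $1$ and the constant-$1$ labeling gives $F_v(\rho)=2-1=1>0$. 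This change of sign is what one wants to exploit to find an exact solution.

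The cleanest way to turn this sign change into an actual fixed point is to use a topological fixed-point theorem rather than trying to solve the system explicitly. Consider the map $\Phi:\R_{\ge 0}^V\to\R_{\ge 0}^V$ defined (in logarithmic coordinates, writing $\xi(v)=\log\rho(v)$) by
\[
\Phi(\xi)(v):=\tfrac12\log\!\left(\prod_{u\to v}e^{\xi(u)}+\prod_{v\to w}e^{\xi(w)}\right),
\]
whose fixed points are exactly the fixed points of $Q$ lying in $\R_{>1}^V$ (note $\Phi(\xi)(v)\ge\tfrac12\log 2>0$ always, so any fixed point automatically has $\rho(v)>1$). The strictly subadditive labeling $\nu$ furnishes an invariant region: on the box $\{0\le \xi(v)\le s\,\nu(v)\}$ for $s$ large enough, the lower bound $\Phi(\xi)(v)\ge\tfrac12\log 2>0$ handles the bottom face and the strict inequality $2s\nu(v)>s\sum\nu(\cdot)$ forces $\Phi$ to map the box into itself, so Brouwer's fixed-point theorem yields a fixed point. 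I expect the main obstacle to be verifying carefully that $\Phi$ maps this compact convex region into itself — in particular bounding $\Phi(\xi)(v)$ from above by $s\,\nu(v)$ uniformly on the box, where one must combine both subadditive inequalities (for the ``$u\to v$'' and ``$v\to w$'' sums) and control the additive $\log 2$ correction coming from having two terms in the sum; choosing $s$ large makes the dominant exponential term win and absorbs this correction. An alternative to Brouwer would be a monotone-iteration or intermediate-value argument along the one-parameter family $\rho_s$, but the Brouwer route most directly produces the exact equality from the strict inequalities.
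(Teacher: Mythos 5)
Your proposal is correct and is essentially the paper's own argument: the easy direction is the same logarithm computation, and the hard direction is the paper's construction transported to logarithmic coordinates --- your box $\{0\le\xi(v)\le s\nu(v)\}$ with $s$ large enough to absorb the $\log 2$ correction is exactly the paper's parallelotope $[\rho_0,\rho_1]$ with $\rho_1(v)=\alpha^{\nu(v)}$, $\alpha^q>2$, and both proofs conclude with Brouwer's fixed point theorem on this invariant region.
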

\begin{proof}

For any map $\rho:V\to \R_{\geq 1}$, define $Z\rho:V\to \R_{\geq 1}$ by
\[Z\rho(v)=\sqrt{\prod_{u\to v}\rho(u)+\prod_{v\to w}\rho(w)}.\]
Thus we are concerned with the fixed points of a continuous operator $Z:\R_{\geq 1}^{|V|}\to\R_{\geq 1}^{|V|}$.

First, define $\rho_0(v):=1$ for all $v$, then $Z\rho_0(v)\geq \sqrt{2}>\rho_0(v)$ for all $v$. For two maps $\rho',\rho'':V\to \R_{\geq1}$ we write $\rho'> \rho''$ (resp., $\rho'\geq \rho''$) if and only if for all $v$, $\rho'(v)>\rho''(v)$ (resp., $\rho'(v)\geq \rho''(v)$).

Now we can refine Proposition~\ref{prop:fixpt_subadditive} using an intermediate condition:
\begin{lemma}\label{lemma:fixpt_subadditive}
For a bipartite quiver $Q$, the following are equivalent:
\begin{enumerate}
 \item There exists a strictly subadditive labeling for $Q$. \label{item:subadditive}
 \item There exists a map $\rho_1:V\to\R_{>1}$ satisfying $\rho_0< Z\rho_1<\rho_1$.\label{item:rho}
 \item There exists a fixed point for $Q$.\label{item:fixpt}
\end{enumerate}
\end{lemma}
We start with (\ref{item:rho})$\Longrightarrow$(\ref{item:fixpt}). The proof is identical to the proof of \cite[Theorem 2]{Ken} where this theorem is attributed to Tarski. Namely, it is easy to see that if $\rho\geq \rho'$ then $Z\rho\geq Z\rho'$. We have found two maps $\rho_0,\rho_1$ such that $Z\rho_0>\rho_0$ and $Z\rho_1<\rho_1$. Therefore a convex compact set (which is just a parallelotope) 
\[[\rho_0,\rho_1]:=\{\rho:V\to\R_{\geq 1}\mid \rho_0(v)\leq \rho(v)\leq \rho_1(v)\quad \forall\,v\in V\}\] 
is mapped by $Z$ to itself, so (\ref{item:fixpt}) follows immediately from the Brouwer fixed point theorem. Since $Z\rho_0>\rho_0$, none of the coordinates of this fixed point can be equal to $1$ and thus our fixed point is a map $\rho:V\to\R_{>1}$. 

Now we want to show (\ref{item:fixpt})$\Longrightarrow$(\ref{item:subadditive}). Assume $\rho:V\to\R_{>1}$ is a fixed point of $Z$. Define $\nu(v):=\log \rho(v)>0$ and then we have
  \begin{eqnarray*}
 \exp(\nu(v))&=&\rho(v)=Z\rho(v)=\sqrt{\prod_{u\to v}\rho(u)+\prod_{v\to w}\rho(w)}\\
             &>& \max\left\{\sqrt{\prod_{u\to v}\rho(u)},\sqrt{\prod_{v\to w}\rho(w)}\right\},
  \end{eqnarray*}
  so after taking the logarithm of both sides we see that $\nu$ is a strictly subadditive labeling. 
  
The only thing left to show is (\ref{item:subadditive})$\Longrightarrow$(\ref{item:rho}). Assume $\nu: V\to \R_{>0}$ is a strictly subadditive labeling of vertices of $Q$. In other words, for any $v$ we have 
 \[2\nu(v)>\sum_{u\to v}\nu(u),\quad 2\nu(v)>\sum_{v\to w}\nu(w).\]
 Since the number of vertices is finite, there exists a number $q>0$ such that 
 \[2\nu(v)-q>\sum_{u\to v}\nu(u),\quad 2\nu(v)-q>\sum_{v\to w}\nu(w).\]
 
  Let $\alpha>1$ be a big enough real number so that $\alpha^q>2$ and put $\rho_1(v):=\alpha^{\nu(v)}$ for all $v\in V$. Then we have 
  \begin{eqnarray*}
 Z\rho_1(v)&=&\sqrt{\prod_{u\to v}\rho_1(u)+\prod_{v\to w}\rho_1(w)}=\sqrt{\alpha^{\sum_{u\to v}\nu(u)}+\alpha^{\sum_{v\to w}\nu(w)}}\\
      &<&\sqrt{2\alpha^{2\nu(v)-q}}<\alpha^{\nu(v)}=\rho_1(v).   
  \end{eqnarray*}
  Clearly, we have $\rho_0<\rho_1$ and thus $Z\rho_0<Z\rho_1$ so 
  \[\rho_0<Z\rho_0<Z\rho_1<\rho_1,\]
 which finishes the proof of Lemma~\ref{lemma:fixpt_subadditive} and thus the proof of Proposition~\ref{prop:fixpt_subadditive}.
\end{proof}

\section{Strictly subadditive labeling property $\Longleftrightarrow$ admissible $ADE$ bigraph property}  \label{sec:subADE}

\begin{proposition}\label{prop:subadditive_ADE}
 Let $Q$ be a bipartite recurrent quiver. Then $Q$ has a strictly subadditive labeling if and only if $G(Q)$ is an admissible $ADE$ bigraph.
\end{proposition}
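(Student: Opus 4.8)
The plan is to prove both directions by relating the strictly subadditive condition on the quiver $Q$ directly to Vinberg's characterization (Theorem~\ref{thm:Vinberg}) of $ADE$ Dynkin diagrams, using the correspondence between directed edges of $Q$ and colored edges of $G(Q)=(\Gamma,\Delta)$. Recall that for a vertex $v$, the arrows $u\to v$ in $Q$ correspond exactly to the edges of $\Gamma$ incident to $v$ if $\e_v=1$ (and to edges of $\Delta$ if $\e_v=0$), while the arrows $v\to w$ correspond to the edges of the other color. Thus the two strict inequalities $2\nu(v)>\sum_{u\to v}\nu(u)$ and $2\nu(v)>\sum_{v\to w}\nu(w)$ say precisely that $2\nu(v)$ strictly exceeds the $\Gamma$-neighbor sum and the $\Delta$-neighbor sum separately, for every $v$. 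This immediately gives the easy direction: if $G(Q)$ is an admissible $ADE$ bigraph, then each component of $\Gamma$ and each component of $\Delta$ is an $ADE$ Dynkin diagram, so Vinberg's theorem furnishes labelings; the obstacle is that these labelings may differ between $\Gamma$ and $\Delta$, so I must produce a single $\nu$ working for both colors simultaneously.

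First I would handle the forward implication. Given a strictly subadditive $\nu:V\to\R_{>0}$, its restriction to any connected component of $\Gamma$ satisfies Vinberg's inequality $2\nu(v)>\sum_{(u,v)\in\Gamma}\nu(u)$ for that component, so by Theorem~\ref{thm:Vinberg} every component of $\Gamma$ is an $ADE$ Dynkin diagram; the identical argument applies to $\Delta$. Since $Q$ is recurrent, Corollary~\ref{cor:recurrent_commuting} gives that $A_\Gamma$ and $A_\Delta$ commute. These three facts are exactly the defining conditions of an admissible $ADE$ bigraph, so this direction is essentially a translation plus a direct appeal to Vinberg.

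For the converse, the main work is producing one labeling that is subadditive for both colors. The natural strategy is to invoke Proposition~\ref{prop:coxeter}: in each connected admissible $ADE$ bigraph all components of $\Gamma$ share a common Coxeter number $p$ and all components of $\Delta$ share a common Coxeter number $q$. I would use the standard Perron--Frobenius eigenvector of the Cartan-type operator: for an $ADE$ Dynkin diagram with Coxeter number $h$, the largest eigenvalue of the adjacency matrix is $2\cos(\pi/h)$, with a strictly positive eigenvector. Because $A_\Gamma$ and $A_\Delta$ commute and are both symmetric, they can be simultaneously handled; I would seek a common positive vector $\nu$ that is a dominant eigenvector for both, giving $A_\Gamma\nu=2\cos(\pi/p)\,\nu$ and $A_\Delta\nu=2\cos(\pi/q)\,\nu$. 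Since $2\cos(\pi/p)<2$ and $2\cos(\pi/q)<2$, such a $\nu$ satisfies both strict subadditivity inequalities at once. The key point to verify is that a simultaneous strictly positive eigenvector exists: the commuting symmetric matrices $A_\Gamma,A_\Delta$ are simultaneously diagonalizable, and I would argue the Perron eigenspaces intersect in a positive vector, using that the restriction to each component is one-dimensional and positive and that the component graph structure is controlled by Lemma~\ref{lemma:acyclic}.

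The hard part will be this simultaneous-eigenvector construction in the converse direction: ensuring that the Perron--Frobenius eigenvectors of $\Gamma$ and $\Delta$ can be chosen to coincide on each shared support, so that a single globally positive $\nu$ works for both colors. The commutativity of $A_\Gamma$ and $A_\Delta$ is what makes this possible, but one must check that the dominant eigenvector of $A_\Gamma$ on a component of $\Gamma$ is compatible with the dominant eigenvector of $A_\Delta$ across the $\Delta$-edges leaving that component; this is where Stembridge's structural results (in particular the acyclicity of the component graph) enter to rule out incompatibilities. Once the common eigenvector is in hand, both inequalities are strict because the eigenvalues are strictly below $2$, completing the proof.
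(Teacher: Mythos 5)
Your overall strategy coincides with the paper's: the forward direction via restricting the labeling to connected components of $\Gamma$ and $\Delta$ and invoking Vinberg's Theorem~\ref{thm:Vinberg} together with Corollary~\ref{cor:recurrent_commuting}, and the converse via a single positive vector that is simultaneously a dominant eigenvector of $A_\Gamma$ and $A_\Delta$, with eigenvalues $2\cos(\pi/p)<2$ and $2\cos(\pi/q)<2$. The forward half of your argument is complete and matches the paper essentially verbatim.

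The problem is exactly where you flag ``the hard part'': you never actually produce the common positive eigenvector, and the tool you lean on --- simultaneous diagonalizability of the commuting symmetric matrices $A_\Gamma, A_\Delta$ --- does not suffice. Simultaneous diagonalizability gives a common eigenbasis, but by itself says nothing about the two \emph{dominant} eigenspaces having a nonzero intersection, let alone one containing a strictly positive vector. The argument that does work, and which is the content of the construction in the proof of \cite[Lemma~4.5]{S} that the paper cites rather than reproves, runs as follows. By Proposition~\ref{prop:coxeter} all components of $\Gamma$ have the same Coxeter number $p$, and since the dominant eigenvalue $2\cos(\pi/p)$ of each $ADE$ component is simple, the eigenspace $W=\ker\left(A_\Gamma-2\cos(\pi/p)I\right)$ is exactly the span of the per-component Perron vectors $v_C$. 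By commutativity $A_\Delta$ preserves $W$, and in the basis $\{v_C\}$ (vectors with disjoint supports) it acts by a nonnegative matrix whose support graph is the component graph $\operatorname{Comp}(\Gamma)$; when $G$ is connected this matrix is irreducible, so Perron--Frobenius yields strictly positive coefficients $c_C$, and $\nu=\sum_C c_C v_C$ is strictly positive on all of $V$ and satisfies $A_\Gamma\nu=2\cos(\pi/p)\nu$, $A_\Delta\nu=\lambda\nu$ for some $\lambda$. Finally, a positive eigenvector of an irreducible nonnegative matrix is automatically its Perron vector, so the restriction of $\nu$ to each component of $\Delta$ is that component's dominant eigenvector, forcing $\lambda=2\cos(\pi/q)$. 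Note also that Lemma~\ref{lemma:acyclic} (acyclicity of the component graph) is not the relevant input here --- what matters is connectivity of the component graph and Perron--Frobenius; acyclicity is used in the paper only to build the $2\times2$ partition. So your plan is the right one, but as written the converse rests on an unproved existence claim whose proposed justification cannot be completed as stated; either carry out the construction above or do what the paper does and cite Stembridge's \cite[Lemma~4.5]{S} directly.
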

\begin{proof}
 We have already shown in Corollary~\ref{cor:recurrent_commuting} that if $Q$ is a bipartite recurrent quiver then the adjacency matrices $A_\Gamma$ and $A_\Delta$ of the associated bipartite bigraph $G(Q)=(\Gamma,\Delta)$ commute with each other. Thus we just need to show that $Q$ has a strictly subadditive labeling if and only if all connected components of $\Gamma$ and of $\Delta$ are $ADE$ Dynkin diagrams. 
 
 One direction follows from Vinberg's characterization (see Theorem~\ref{thm:Vinberg}). If $\nu:\VertQ\to\R_{>0}$ is a strictly subadditive labeling then its restriction to each connected component of $\Gamma$ or $\Delta$ gives a labeling of the vertices of this component satisfying the conditions of Theorem~\ref{thm:Vinberg} and therefore forces this component to be an $ADE$ Dynkin diagram.
 
 \def\v{{ \mathbf{v}}}
 To go in the other direction, we use a construction from the proof of \cite[Lemma~4.5]{S}. In the course of this proof, for any admissible $ADE$ bigraph $(\Gamma,\Delta)$ Stembridge constructs a vector $\v:\VertQ\to\R_{>0}$ such that its restriction onto each connected component of $\Gamma$ or of $\Delta$ is a strictly positive multiple of the dominant eigenvector for (the adjacency matrix of) this connected component. Since the dominant eigenvalue for each connected component of $\Gamma$ (resp., of $\Delta$) is equal to $2\cos(\pi/h)$ (resp., $2\cos(\pi/h')$) where $h,h'$ are common Coxeter numbers provided by Proposition~\ref{prop:coxeter}, we conclude that $\v$ is a \emph{positive common eigenvector for both $A_\Gamma$ and $A_\Delta$ with eigenvalues $2\cos(\pi/h)$ and $2\cos(\pi/h')$ respectively}. By the definition of the adjacency matrix, this can be rewritten as 
 \[2\cos(\pi/h) \v(v)=\sum_{(v,u)\in\Gamma} \v(u)\]
 for all $v\in\VertQ$, and similarly
 \[2\cos(\pi/h') \v(v)=\sum_{(v,w)\in\Delta} \v(w).\]
 Since $2>2\cos(\pi/h)$ and $2>2\cos(\pi/h')$ for any admissible $ADE$ bigraph, it follows that $\v$ is a strictly subadditive labeling for $Q$. 
\end{proof}

\section{Connection between periodicity and tropical periodicity} \label{sec:pertrop}

Let $Q$ be a bipartite recurrent quiver. Then there is the $T$-system and the tropical $T$-system associated with $Q$. For clarity, we are going to call the former the \emph{birational} $T$-system associated with $Q$.

\subsection{Newton polytopes and positivity}
Let $\Z[\x^{\pm1}]$ denote the ring of Laurent polynomials in the variables $\x=\{x_v\}_{v\in\VertQ}$. Since the $T$-system associated with $Q$ is a special case of a cluster algebra, the Laurent phenomenon of Fomin and Zelevinsky \cite{FZ} states that for all $v\in\VertQ$ and $t\in\Z$ with $t+\e_v$ even, the value $T_v(t)$ belongs to $\Z[\x^{\pm1}]$.

\def\Newton{{ \operatorname{Newton}}}
\def\Conv{{ \operatorname{Conv}}}
\def\maxx{{\sup}}
\def\minn{{\inf}}

Let $d:=|\VertQ|$ be the number of vertices of $Q$. In this section, we identify maps $\VertQ\to\Z$ with vectors in $\Z^d$ and for $\alpha\in\Z^d$, we define $\x^ \alpha:=\prod_v x_v^{ \alpha(v)}$.

For any Laurent polynomial $p\in\Z[\x^{\pm1}]$, say,
\[p=\sum_{\alpha\in\Z^d} c_\alpha \x^\alpha,\]
define its \emph{Newton polytope} $\Newton(p)\subset \R^d$ to be 
\[\Newton(p):=\Conv\{\alpha\in\Z^d\mid c_\alpha\neq 0\}.\]
Recall that for two polytopes $P_1,P_2\subset \R^d$, their \emph{Minkowski sum} $P_1+P_2$ is defined as follows:
\[P_1+P_2=\{z_1+z_2\mid z_1\in P_1,\ z_2\in P_2\}.\]
Comparing the definitions, one immediately obtains the following well-known property of Newton polytopes: for $p_1,p_2\in \Z[\x^{\pm1}]$, we have
\[\Newton(p_1p_2)=\Newton(p_1)+\Newton(p_2).\]

\begin{definition}
 A Laurent polynomial 
 \[p=\sum_{\alpha\in\Z^d} c_\alpha \x^\alpha\]
 is called \emph{Newton-positive} if for every $\alpha$ that is a vertex of $\Newton(p)$, we have $c_ \alpha>0$.
\end{definition}

The following lemma is straightforward:
\begin{lemma}\label{lemma:Newton_positive}
Let $p_1,p_2,p\in\Z[\x^{\pm1}]$ be Laurent polynomials and assume $p_1,p_2$ are Newton-positive. Then $p$ is also Newton-positive if it satisfies one of the following:
\begin{itemize}
 \item $p=p_1p_2$, or
 \item $p=p_1/p_2$, or
 \item $p=p_1+p_2$.
\end{itemize}
Moreover, if $p_1$ and $p_2$ are Newton-positive, then  
\[\Newton(p_1+p_2)=\Conv\left(\Newton(p_1)\cup\Newton(p_2)\right).\]
\end{lemma}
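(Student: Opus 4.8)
The plan is to reduce all four statements to one standard fact about Minkowski sums, which I would record as a preliminary observation: for polytopes $P_1,P_2\subset\R^d$ and a linear functional $\ell$, the face of $P_1+P_2$ on which $\ell$ is maximized equals the Minkowski sum of the corresponding faces of $P_1$ and $P_2$. Consequently, every vertex $\alpha$ of $P_1+P_2$ decomposes \emph{uniquely} as $\alpha=\beta+\gamma$ with $\beta\in P_1$, $\gamma\in P_2$, and in that decomposition $\beta$ is a vertex of $P_1$ and $\gamma$ a vertex of $P_2$ (uniqueness because if $\ell$ has $\alpha$ as its unique maximizer, then $\ell(\beta')=\max_{P_1}\ell$ and $\ell(\gamma')=\max_{P_2}\ell$ for any admissible splitting, and those maximizing faces are forced to be the singletons $\{\beta\},\{\gamma\}$). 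I would pair this with the trivial inclusion $\operatorname{supp}(p_1+p_2)\subseteq\operatorname{supp}(p_1)\cup\operatorname{supp}(p_2)$ and the already-stated identity $\Newton(p_1p_2)=\Newton(p_1)+\Newton(p_2)$.

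For the product $p=p_1p_2$, fix a vertex $\alpha$ of $\Newton(p)=\Newton(p_1)+\Newton(p_2)$ and write $\alpha=\beta+\gamma$ as above. Any contribution to the coefficient of $\x^\alpha$ in $p_1p_2$ comes from a pair $(\beta',\gamma')$ with $\beta'+\gamma'=\alpha$, $\beta'\in\operatorname{supp}(p_1)\subseteq\Newton(p_1)$, $\gamma'\in\operatorname{supp}(p_2)\subseteq\Newton(p_2)$; uniqueness of the decomposition of the vertex $\alpha$ forces $(\beta',\gamma')=(\beta,\gamma)$, so the coefficient is exactly $c^{(1)}_\beta c^{(2)}_\gamma$, a product of two positive numbers, hence positive.

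For the sum I would prove the \emph{moreover} statement and Newton-positivity simultaneously. Set $R:=\Conv(\Newton(p_1)\cup\Newton(p_2))$; the inclusion $\Newton(p_1+p_2)\subseteq R$ is immediate from the support remark. For the reverse, any vertex $\alpha$ of $R$ is an extreme point, hence lies in $\Newton(p_1)\cup\Newton(p_2)$ and is a vertex of whichever polytope(s) contain it. Thus each of $c^{(1)}_\alpha,c^{(2)}_\alpha$ is either $0$ (if $\alpha$ lies outside that polytope) or strictly positive (if $\alpha$ is a vertex of it), with at least one strictly positive, so the coefficient $c^{(1)}_\alpha+c^{(2)}_\alpha$ of $\x^\alpha$ in $p_1+p_2$ is positive: no cancellation occurs at vertices of $R$. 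This puts every vertex of $R$ into $\operatorname{supp}(p_1+p_2)$, giving $R\subseteq\Newton(p_1+p_2)$ and hence equality, and the same computation shows all vertices of $\Newton(p_1+p_2)=R$ have positive coefficient.

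The quotient is the one genuinely requiring an idea, and I expect it to be the main obstacle, since from $p=p_1/p_2\in\Z[\x^{\pm1}]$ we get $p_1=p\,p_2$ but $p$ is \emph{not} assumed Newton-positive, so the product argument cannot be quoted. Fix a vertex $\alpha$ of $\Newton(p)$ with coefficient $d_\alpha$; I want $d_\alpha>0$. The interior of the normal cone of $\Newton(p)$ at $\alpha$ is a nonempty open set of functionals having $\alpha$ as unique maximizer, while the functionals lacking a unique maximizer on $\Newton(p_2)$ lie in a finite union of hyperplanes; hence I can pick $\ell$ in that open cone which also attains its maximum over $\Newton(p_2)$ at a single vertex $\gamma$. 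Then $\ell$ is uniquely maximized on $\Newton(p_1)=\Newton(p)+\Newton(p_2)$ at $\alpha+\gamma$, so $\alpha+\gamma$ is a vertex of $\Newton(p_1)$, and repeating the product computation shows the coefficient of $\x^{\alpha+\gamma}$ in $p_1=p\,p_2$ equals $d_\alpha c^{(2)}_\gamma$. Newton-positivity of $p_1$ makes this positive, and $c^{(2)}_\gamma>0$ since $\gamma$ is a vertex of the Newton-positive $p_2$; dividing yields $d_\alpha>0$. The only delicate point is the existence of the generic $\ell$, which is precisely the fact that a nonempty open cone meets the complement of finitely many hyperplanes.
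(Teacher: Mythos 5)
Your proof is correct. Note that the paper itself offers no argument at all here: the lemma is introduced with the words ``The following lemma is straightforward'' and is never proved, so there is no proof to compare against line by line. Your writeup supplies exactly the details the authors left implicit, and it does so along what is surely the intended route: the key fact that a vertex of a Minkowski sum $P_1+P_2$ decomposes uniquely as a sum of vertices of $P_1$ and $P_2$ handles the product case (and, combined with the no-cancellation observation at extreme points, the sum case together with the ``moreover'' identity). Your diagnosis that the quotient is the only clause requiring a genuine idea is accurate, since $p=p_1/p_2$ is not itself assumed Newton-positive and so the product clause cannot simply be invoked; your resolution --- pick a functional in the interior of the normal cone of $\Newton(p)$ at the given vertex that is simultaneously generic for $\Newton(p_2)$, so that it singles out a vertex $\alpha+\gamma$ of $\Newton(p_1)$ whose coefficient factors as $d_\alpha c^{(2)}_\gamma$ --- is correct, and the genericity step (a nonempty open cone cannot be covered by finitely many hyperplanes) is exactly the right justification. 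The one point worth stating explicitly if this were to be written out in full is that the normal cone of a polytope at a vertex has nonempty interior even when the polytope is not full-dimensional (e.g.\ when $\Newton(p)$ is a segment or a point), which follows from separating the vertex from the convex hull of the remaining vertices; your argument implicitly uses this and it is true, so there is no gap.
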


\subsection{Degrees and support functions}
For a linear function $\l:\R^d\to \R$ and a polytope $P\subset\R^d$, define $\maxx(\l,P):=\maxx\{\l(z)\mid z\in P\}$. This supremum is actually a maximum since $P$ is compact, but we choose this notation to avoid confusion with the tropical $T$-system relations. %For $v\in\VertQ$, let $\delta_v:\R^d\to\R$ be the $v$-th coordinate function. 

\begin{lemma}\label{lemma:tropical_birational}
 For a map $\l:\VertQ\to\R$, the tropical $T$-system $\Ttr$ associated with $Q$ is obtained from the birational $T$-system via the following transformation:
 \begin{equation}\label{eq:tropical_birational}
 \Ttr_v(t)=\maxx\left(\l,\Newton(T_v(t))\right) 
 \end{equation}

\end{lemma}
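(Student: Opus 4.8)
The plan is to prove \eqref{eq:tropical_birational} by a simultaneous induction, establishing at each step both that $T_v(t)$ is Newton-positive and that its support function $S_v(t):=\maxx(\l,\Newton(T_v(t)))$ obeys the tropical recurrence \eqref{eq:tropical}. Since the tropical $T$-system is uniquely determined by its recurrence together with its initial data (each layer being recoverable from the two preceding ones), it then suffices to verify that $S_v(t)$ satisfies the same recurrence and the same initial conditions as $\Ttr_v(t)$. The induction proceeds outward from the initial layers $t=\e_v$ in both directions of increasing and decreasing $t$; at each stage the value on the new layer is computed from the two preceding layers via $T_v(t+1)=N_v(t)/T_v(t-1)$, or its backward analogue, where $N_v(t):=\prod_{u\to v}T_u(t)+\prod_{v\to w}T_w(t)$.

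For the base case, $T_v(\e_v)=x_v$ has Newton polytope equal to the single lattice point $\ee_v$, which is trivially Newton-positive, and $S_v(\e_v)=\maxx(\l,\{\ee_v\})=\l(v)$ under the identification of the vector $\l\in\R^d$ with the linear functional $z\mapsto\sum_u\l(u)z(u)$; this matches $\Ttr_v(\e_v)=\l(v)$. For the inductive step, the Laurent phenomenon guarantees that $T_v(t\pm1)\in\Z[\x^{\pm1}]$, and since the two preceding layers are Newton-positive by hypothesis, Lemma~\ref{lemma:Newton_positive} shows successively that each product $\prod_{u\to v}T_u(t)$ and $\prod_{v\to w}T_w(t)$, their sum $N_v(t)$, and finally the quotient defining $T_v(t\pm1)$ are all Newton-positive. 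This closes the Newton-positivity half of the induction.

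The support-function half uses three properties of $\maxx(\l,\cdot)$: additivity under Minkowski sums, $\maxx(\l,P_1+P_2)=\maxx(\l,P_1)+\maxx(\l,P_2)$; the identity $\maxx(\l,\Conv(P_1\cup P_2))=\max(\maxx(\l,P_1),\maxx(\l,P_2))$; and, as a consequence of additivity, $\maxx(\l,P)=\maxx(\l,R)-\maxx(\l,Q)$ whenever $P+Q=R$. Applying the product rule $\Newton(p_1p_2)=\Newton(p_1)+\Newton(p_2)$ together with additivity gives $\maxx(\l,\Newton(\prod_{u\to v}T_u(t)))=\sum_{u\to v}S_u(t)$, and likewise for the outgoing product. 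Because both products are Newton-positive, the final clause of Lemma~\ref{lemma:Newton_positive} identifies $\Newton(N_v(t))$ with $\Conv$ of the union of the two Newton polytopes, whence $\maxx(\l,\Newton(N_v(t)))=\max(\sum_{u\to v}S_u(t),\sum_{v\to w}S_w(t))$. Finally, the defining relation $T_v(t+1)T_v(t-1)=N_v(t)$ yields $\Newton(T_v(t+1))+\Newton(T_v(t-1))=\Newton(N_v(t))$, so the subtraction rule gives $S_v(t+1)+S_v(t-1)=\max(\sum_{u\to v}S_u(t),\sum_{v\to w}S_w(t))$, which is precisely \eqref{eq:tropical}.

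The one genuinely delicate point, and the place I expect trouble if one is careless, is the passage through the sum $N_v(t)$: a priori, cancellation among the top-degree terms of the two products could make $\Newton(N_v(t))$ strictly smaller than $\Conv$ of the union of their Newton polytopes, which would break the max-formula. This is exactly why Newton-positivity must be transported along the induction, and why it is essential that Lemma~\ref{lemma:Newton_positive} supplies not merely the positivity of sums and quotients but the precise equality $\Newton(p_1+p_2)=\Conv(\Newton(p_1)\cup\Newton(p_2))$ for Newton-positive summands. Everything else reduces to the routine bookkeeping of support functions recorded above.
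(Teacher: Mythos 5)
Your proof is correct and takes essentially the same route as the paper's: propagate Newton-positivity of $T_v(t)$ by induction using Lemma~\ref{lemma:Newton_positive} (with the Laurent phenomenon justifying the quotient step), verify the initial conditions, and transfer the birational recurrence to the tropical recurrence~\eqref{eq:tropical} via the two support-function identities for Minkowski sums and convex hulls of unions. The paper's proof is simply a terser version of this argument, and your emphasis on the equality $\Newton(p_1+p_2)=\Conv\left(\Newton(p_1)\cup\Newton(p_2)\right)$ for Newton-positive summands as the safeguard against cancellation is precisely the point the paper's use of Newton-positivity is designed to address.
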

\begin{proof}
Since the Laurent polynomial $x_v$ is Newton-positive for every $v$, it follows from Lemma~\ref{lemma:Newton_positive} that the Laurent polynomial $T_v(t)$ is Newton-positive for all $t\in\Z,v\in\VertQ$. Next, (\ref{eq:tropical_birational}) clearly holds for $t=\e_v$. It remains to note that for any linear function $\l:\R^d\to\R$ and any two polytopes $P_1$ and $P_2$,
 \[\maxx(\l,P_1+P_2)=\maxx(\l,P_1)+\maxx(\l,P_2),\]
 and
 \[\maxx(\l,\Conv(P_1\cup P_2))=\max\left(\maxx(\l,P_1),\maxx(\l,P_2)\right).\]
\end{proof}

\def\maxdeg{{ \operatorname{deg}_{\max}}}
\def\mindeg{{ \operatorname{deg}_{\min}}}

\begin{definition}
 For $p\in\Z[\x^{\pm1}]$ and $v\in\VertQ$, we define $\maxdeg(v,p) \in \Z$ (resp., $\mindeg(v,p)\in\Z$) to be the maximal (resp., minimal) degree of $x_v$ in $p$ viewed as an element of $\Z[x_v^{\pm1}]$ with all the other indeterminates $\{x_u\}_{u\neq v}$ regarded as constants.
\end{definition}
 For $u\in\VertQ$, set $\delta_u:\VertQ\to\R$ to be 
 \[\delta_u(v):=\begin{cases}
                1, &\text{if $u=v$;}\\
                0, &\text{otherwise.}
               \end{cases}\]

As always, we extend this map by linearity to a map $\delta_u:\Z^d\to\R$. Then $\maxdeg(u,p)$ and $\mindeg(u,p)$ can be expressed in terms of the Newton polytope of $p$ as follows:
 \begin{eqnarray*}
 \maxdeg(u,p)&=&\maxx(\delta_u,\Newton(p));\\
\mindeg(u,p)&=&\minn(\delta_u,\Newton(p)):=-\maxx(-\delta_u,\Newton(p)).
 \end{eqnarray*}

\begin{corollary}
We have 
 \[\t^{ \delta_u}_v(t)=\maxdeg(u,T_v(t))\]
 for all $u,v\in\VertQ$ and all $t\in\Z$ with $t+\e_v$ even.
\end{corollary}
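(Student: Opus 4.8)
The plan is to read off the corollary as the specialization of Lemma~\ref{lemma:tropical_birational} to the linear function $\l=\delta_u$. The only conceptual content is that $\delta_u$ wears two hats: as a map $\VertQ\to\R$ it is the indicator initial datum feeding the tropical recurrence~(\ref{eq:tropical}), while as a linear functional on $\R^d$ (obtained by the linear extension described just before the statement) it is the support functional that records the top degree of $x_u$. Both roles are already set up in the excerpt above the statement, so the argument is a composition of two facts in hand.

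First I would apply Lemma~\ref{lemma:tropical_birational} directly with $\l:=\delta_u$, which is a perfectly legitimate real-valued labeling of $\VertQ$. The lemma then gives, for all $v\in\VertQ$ and all $t\in\Z$ with $t+\e_v$ even,
\[\t^{\delta_u}_v(t)=\maxx\bigl(\delta_u,\Newton(T_v(t))\bigr),\]
where on the right $\delta_u$ is understood as the linear functional on $\R^d$.

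Next I would invoke the identity $\maxdeg(u,p)=\maxx(\delta_u,\Newton(p))$ recorded immediately before the statement, applied to the Laurent polynomial $p:=T_v(t)$. Chaining the two equalities yields
\[\t^{\delta_u}_v(t)=\maxx\bigl(\delta_u,\Newton(T_v(t))\bigr)=\maxdeg(u,T_v(t)),\]
which is precisely the claim.

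There is no genuine obstacle here: the entire argument is the composition of two previously established results, and the one point worth checking is merely that the labeling $\delta_u$ feeding the tropical recurrence and the functional $\delta_u$ measuring degree agree under Lemma~\ref{lemma:tropical_birational}. This is immediate from the linear-extension convention, since the monomial $x_v=\x^{\delta_v}$ has Newton polytope the single point $\delta_v$, and evaluating the functional $\delta_u$ there returns the indicator value $\delta_u(v)$, matching the initial datum $\t^{\delta_u}_v(\e_v)$. Thus the corollary follows with no further computation.
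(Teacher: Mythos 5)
Your proposal is correct and matches exactly how the paper intends this corollary to be established: the paper states it without a separate proof precisely because it is the specialization of Lemma~\ref{lemma:tropical_birational} to $\l=\delta_u$ combined with the displayed identity $\maxdeg(u,p)=\maxx(\delta_u,\Newton(p))$ given immediately beforehand. Your extra check that the two roles of $\delta_u$ (initial labeling versus linear functional) are consistent is a harmless confirmation of what the paper's conventions already guarantee.
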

That is, the tropical $T$-system with initial condition $\delta_u$ controls the maximal degrees of $x_u$ in Laurent polynomials appearing as entries of the birational $T$-system. The following proposition shows that it controls the minimal degrees of $x_u$ as well, even though we still take the maximum in the recurrence for $\Ttr_v(t)$:
\begin{proposition}\label{prop:maxdeg_mindeg}
For all $t\in\Z$ and $u,v\in \VertQ$ such that $t+\e_v$ is even,
\begin{itemize}
 \item  If $\e_u=0$ then 
 \begin{equation}\label{eq:eps0}
 -\mindeg(u,T_v(t))=\maxdeg(u,T_v(t-2))=\t^{\delta_u}_v(t-2);
 \end{equation} 
 \item  If $\e_u=1$ then 
 \begin{equation}\label{eq:eps1}
 -\mindeg(u,T_v(t))=\maxdeg(u,T_v(t+2))=\t^{\delta_u}_v(t+2)
 \end{equation} 
 \end{itemize}
\end{proposition}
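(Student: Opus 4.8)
The plan is to reduce the statement to a purely tropical identity and then settle that identity by the uniqueness of solutions of the tropical $T$-system. In each case the second equality, $\maxdeg(u,T_v(t\mp2))=\t^{\delta_u}_v(t\mp2)$, is just the preceding corollary applied at time $t\mp2$, so only the first equality carries content. For it I would use the identity recorded just before that corollary, $\mindeg(u,p)=-\maxx(-\delta_u,\Newton(p))$, together with Lemma~\ref{lemma:tropical_birational} applied to the linear function $-\delta_u$:
\[-\mindeg(u,T_v(t))=\maxx(-\delta_u,\Newton(T_v(t)))=\t^{-\delta_u}_v(t).\]
Combined with $\maxdeg(u,T_v(t))=\t^{\delta_u}_v(t)$, the proposition becomes the assertion that the two tropical $T$-systems with initial data $-\delta_u$ and $\delta_u$ differ by a shift of two units of time, $\t^{-\delta_u}_v(t)=\t^{\delta_u}_v(t\mp2)$, with the direction of the shift governed by the color $\e_u$ of $u$.

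To prove this tropical identity I would first note that the recurrence~\eqref{eq:tropical} is autonomous: if $(\t_v(t))$ solves it, so does $(\t_v(t+c))$ for any even $c$. Hence both $t\mapsto\t^{-\delta_u}_v(t)$ and $t\mapsto\t^{\delta_u}_v(t\mp2)$ solve the same tropical $T$-system. Moreover a solution is completely determined by its values on any two consecutive time slices: \eqref{eq:tropical} expresses $\t_v(t+1)$ through $\t_v(t-1)$ and the values of the opposite-color vertices on the slice at time $t$, so starting from the two consecutive slices $t=0$ and $t=1$ (which together form the initial data $\{\cdot_v(\e_v)\}$) one reconstructs everything both forward and backward. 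It therefore suffices to check that the two sides agree on one such pair of slices.

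The remaining step is a short local computation that exploits the concentration of the $\delta_u$-data: it equals $1$ at the single vertex $u$ on the slice $t=\e_u$ and $0$ at every other vertex on the two initial slices. On the neighbouring slice lying on one side of $t=\e_u$ all values vanish, so for the vertices on that side the recurrence degenerates to $\t_v(\text{new})=\max(0,0)-\t_v(\e_v)=-[u=v]$; advancing two steps in that direction negates the concentrated datum, turning the isolated $+1$ at $u$ into an isolated $-1$. One then checks that the slice immediately beyond is identically zero: there each of the two neighbour-sums $\sum_{a\to v}\t_a$ and $\sum_{v\to w}\t_w$ is non-positive, at least one of them vanishes (since the quiver has no $2$-cycles, $u$ is a neighbour of $v$ on at most one side), and the value being subtracted is $0$, so the maximum and hence the new value is $0$. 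These two slices are exactly the initial data of the $-\delta_u$-system, which proves the shifted agreement; the direction of the two-step shift is precisely the one moving toward the identically-zero neighbouring slice, and this is dictated by whether $u$ is white or black, i.e.\ by $\e_u$.

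The hard part is really just the first displayed reduction: a priori the minimal degree of $x_u$ in $T_v(t)$ could drop through cancellation among lowest-order terms when one forms the products, quotients, and sums prescribed by the $T$-system, so it is not obvious that $\mindeg(u,\cdot)$ propagates tropically at all. This is exactly what Newton-positivity rules out, via Lemma~\ref{lemma:Newton_positive} and Lemma~\ref{lemma:tropical_birational}: the extremal monomials of every $T_v(t)$ survive, which is why applying Lemma~\ref{lemma:tropical_birational} to the linear function $-\delta_u$ is legitimate and gives $-\mindeg(u,T_v(t))=\t^{-\delta_u}_v(t)$ on the nose. Once this identity is secured, the rest is the finite slice computation above, and no appeal to Stembridge's classification is needed.
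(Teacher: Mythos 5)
Your proposal follows essentially the same route as the paper's own proof. The paper also rests on the fact that $-\mindeg(u,T_v(\cdot))$ obeys the same tropical recurrence as $\maxdeg(u,T_v(\cdot))$ (it dismisses this as clear; your derivation via Lemma~\ref{lemma:tropical_birational} applied to the linear function $-\delta_u$ is exactly the intended justification), and it then performs the identical two-step slice computation at the initial times, concluding because both sides satisfy the same recurrence and agree initially. Your write-up is, if anything, more explicit than the paper's about the reduction to the identity $-\mindeg(u,T_v(t))=\t^{-\delta_u}_v(t)$ and about why agreement on one pair of consecutive slices propagates in both time directions.

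The one point you leave open --- which direction of the two-step shift goes with which color of $u$ --- is, however, exactly where the content of the color dichotomy lies, and settling it gives the \emph{opposite} pairing from the one you claim to prove. For $\e_u=0$ the identically-zero initial slice is $t=1$, so your computation moves forward: slices $2,3$ of the $\delta_u$-system carry the data $-\delta_u$, whence $\t^{-\delta_u}_v(t)=\t^{\delta_u}_v(t+2)$, i.e.\ $-\mindeg(u,T_v(t))=\maxdeg(u,T_v(t+2))$; for $\e_u=1$ one moves backward and gets $t-2$. This contradicts \eqref{eq:eps0} and \eqref{eq:eps1} as printed; the resolution is that the proposition as printed has its two displayed equations interchanged (a typo in the paper). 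Indeed, the paper's own proof derives, for $\e_u=0$, the identity $\maxdeg(u,T_v(2+\e_v))=-\mindeg(u,T_v(\e_v))$, which is the $t+2$ version, and the example following the proposition agrees: in Example~\ref{example:A3_A1}, with $u=a$, $v=c$, $t=4$, one computes $T_c(4)=(x_b+1+x_ax_c)/(x_ax_b)$, so $-\mindeg(a,T_c(4))=1$, which equals $\t^{\delta_a}_c(6)=1$ from Table~\ref{tab:tropical_delta}, whereas $\t^{\delta_a}_c(2)=0$. So your argument, carried to completion, proves the corrected statement rather than the printed one; as written, your proposal asserts the printed signs while your own slice computation refutes them. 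Make the direction determination explicit and record the corrected signs.
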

\begin{proof}
 The integers $-\mindeg(u,T_v(t))$ clearly satisfy the same recurrence as the integers $\maxdeg(u,T_v(t))$. 
 Therefore it only remains to check the initial case $t=\e_v$. Assume for example that $\e_u=0$. Initially, 
 \[\maxdeg(u,T_v(\e_v))=\delta_u(v)\] 
 and 
 \[-\mindeg(u,T_v(\e_v))=-\delta_u(v).\] 
 Let us now look at $t=2$, thus $T_v(t)$ is defined only for $\e_v=0$. Since $\e_u=0$ as well, $v$ and $u$ have to be of the same color, so they are not connected and thus
 \[\maxdeg(u,T_v(2))=\max(0,0)-\maxdeg(u,T_v(0))=-\delta_u(v).\]
 For $t=3$, $v$ and $u$ have to be of different color. So they are either not connected in which case we will get a zero again, or they are connected in which case we will get 
 \[\maxdeg(u,T_v(3))=\max(-1,0)-\maxdeg(u,T_v(1))=0.\]
 To sum up,
 \[\maxdeg(u,T_v(2+\e_v))=-\delta_u(v)=-\mindeg(u,T_v(\e_v)).\]
 Since both sides of (\ref{eq:eps0}) agree on the initial conditions and satisfy the same recurrence, they are equal. The case $\e_u=1$ in the proof of (\ref{eq:eps1}) is treated similarly. 
\end{proof}

\begin{example}
Let $Q,a,b,c,\e$ be as in Example~\ref{example:A3_A1}. In order to illustrate Proposition~\ref{prop:maxdeg_mindeg}, we give the values of $\t^{\delta_u}_v(t)$ for all $u,v\in\VertQ$ and all $t=0,\dots,13$ with even $t+\e_v$ in Table~\ref{tab:tropical_delta}.

\begin{table}
\centering
\begin{tabular}{|c|ccc|ccc|ccc|}\hline
$u$                         &  $a$  &       &        &        & $b$    &        &        &        & $c$    \\\hline         
\backslashbox{$t$}{$v$}     &  $a$  & $b$   &  $c$   &  $a$   & $b$    &  $c$   &  $a$   & $b$    &  $c$   \\\hline
$13$                        &       & $ 0$  &        &        & $ 1$   &        &        & $ 0$   &         \\\hline
$12$                        &  $1$  &       & $ 0$   &  $ 0$  &        & $ 0$   &  $ 0$  &        &  $ 1$   \\\hline
$11$                        &       & $ 1$  &        &        & $-1$   &        &        & $ 1$   &         \\\hline
$10$                        &  $0$  &       & $ 1$   &  $ 0$  &        & $ 0$   &  $ 1$  &        &  $ 0$   \\\hline
$ 9$                        &       & $ 0$  &        &        & $ 1$   &        &        & $ 0$   &         \\\hline
$ 8$                        &  $0$  &       & $-1$   &  $ 1$  &        & $ 1$   &  $-1$  &        &  $ 0$   \\\hline
$ 7$                        &       & $ 0$  &        &        & $ 1$   &        &        & $ 0$   &         \\\hline
$ 6$                        &  $0$  &       & $ 1$   &  $ 0$  &        & $ 0$   &  $ 1$  &        &  $ 0$   \\\hline
$ 5$                        &       & $ 1$  &        &        & $-1$   &        &        & $ 1$   &         \\\hline
$ 4$                        &  $1$  &       & $ 0$   &  $ 0$  &        & $ 0$   &  $ 0$  &        &  $ 1$   \\\hline
$ 3$                        &       & $ 0$  &        &        & $ 1$   &        &        & $ 0$   &         \\\hline
$ 2$                        &  $-1$ &       & $ 0$   &  $ 1$  &        & $ 1$   &  $ 0$  &        &  $-1$   \\\hline
$ 1$                        &       & $ 0$  &        &        & $ 1$   &        &        & $ 0$   &         \\\hline
$ 0$                        &  $1$  &       & $ 0$   &  $ 0$  &        & $ 0$   &  $ 0$  &        &  $ 1$   \\\hline
\end{tabular}
\caption{\label{tab:tropical_delta} The values of $\t^{\delta_u}_v(t)$}
\end{table}

For example, for $u=a$ and $t=2$, we see that 
\[\t^{\delta_u}_v(2+\e_v)=-\t^{\delta_u}_v(\e_v)=-\mindeg(u,\Newton(x_v)).\] 
For $u=b$ and $t=-2$,
\[\t^{\delta_u}_v(-2+\e_v)=\t^{\delta_u}_v(10+\e_v)=-\t^{\delta_u}_v(\e_v)=-\mindeg(u,\Newton(x_v)).\]
\end{example}

We summarize the results of this section as follows:

\begin{proposition}\label{prop:tropical_birational_summary}
 For an integer $N$ and a bipartite recurrent quiver $Q$, the following are equivalent:
 \begin{enumerate}
 \item \label{item:weak_tropical} $\t^{\delta_u}_v(\e_v+2N)=\t^{\delta_u}_v(\e_v)$ for all $u,v\in\VertQ$;
  \item \label{item:strong_tropical} $\Ttr_v(t+2N)=\Ttr_v(t)$ for all $\l:\VertQ\to\R$ and $v\in\VertQ$, $t\in\Z$ with $t+\e_v$ even;
  \item \label{item:birational} There exists a map $c:\VertQ\to\R_{>0}$ such that $T_v(t+2N)=c(v)T_v(t)$ for all $v\in\VertQ,t\in\Z$ with $t+\e_v$ even.
 \end{enumerate}
 In any of these cases, if $Q$ has the fixed point property then $c(v)=1$ for all $v$.
\end{proposition}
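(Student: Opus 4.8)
The plan is to establish the chain of equivalences $(\ref{item:weak_tropical})\Leftrightarrow(\ref{item:strong_tropical})\Leftrightarrow(\ref{item:birational})$, and then handle the final claim about $c$ separately. The implication $(\ref{item:strong_tropical})\Longrightarrow(\ref{item:weak_tropical})$ is trivial, since $(\ref{item:weak_tropical})$ is just $(\ref{item:strong_tropical})$ specialized to the particular initial values $\l=\delta_u$ at $t=\e_v$. For the converse $(\ref{item:weak_tropical})\Longrightarrow(\ref{item:strong_tropical})$, I would exploit the linearity of the tropical system: the map $\l\mapsto\Ttr_v(t)$ obtained by solving the recurrence~(\ref{eq:tropical}) is piecewise-linear in the initial data, and in fact, because the recurrence is a \emph{max} of two linear forms, each $\Ttr_v(t)$ is a convex piecewise-linear function of $\l$. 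The key observation is that the tropical $T$-system~(\ref{eq:tropical}) is precisely the support-function dynamics of the Newton polytopes via Lemma~\ref{lemma:tropical_birational}, so $\Ttr_v(t)=\maxx(\l,\Newton(T_v(t)))$; hence if the Newton polytopes $\Newton(T_v(\e_v+2N))$ and $\Newton(T_v(\e_v))$ have equal support functions against \emph{all} the coordinate directions $\pm\delta_u$, i.e. the same vertices of their bounding boxes in every coordinate, one needs to upgrade this to equality of support functions against arbitrary $\l$.

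Rather than directly comparing polytopes, the cleaner route is to pass through the birational statement. I would first prove $(\ref{item:birational})\Longrightarrow(\ref{item:strong_tropical})$: if $T_v(t+2N)=c(v)T_v(t)$ with $c(v)>0$, then $\Newton(T_v(t+2N))=\Newton(T_v(t))+\{\beta_v\}$ is a translate of $\Newton(T_v(t))$ by the integer vector $\beta_v$ recording the exponent shift (where $c(v)=\x^{\beta_v}$ as a monomial, noting $c(v)$ must be a Laurent monomial for a Laurent-polynomial identity with positive coefficient to hold). Translating a polytope shifts its support function by a constant depending only on $\l$ and $\beta_v$, so $\Ttr_v(t+2N)-\Ttr_v(t)=\maxx(\l,\beta_v)=\l(\beta_v)$ is an additive constant independent of $t$; applying this over two periods and using that the tropical system starts from the same initial data forces periodicity outright once one checks the shift constant is consistent, which it is because $\beta_v$ is $2N$-periodic. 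For the reverse direction $(\ref{item:strong_tropical})\Longrightarrow(\ref{item:birational})$, I would argue that periodicity of the support functions of $\Newton(T_v(t))$ against every linear functional $\l$ means the polytopes $\Newton(T_v(t+2N))$ and $\Newton(T_v(t))$ differ by a translation; combined with the Laurent phenomenon and Newton-positivity (the vertices of the Newton polytope carry positive coefficients), this pins down $T_v(t+2N)$ as a monomial multiple $c(v)T_v(t)$.

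The main obstacle is precisely this last step: passing from equality of Newton polytopes up to translation to the exact birational identity $T_v(t+2N)=c(v)T_v(t)$, since equal Newton polytopes do not in general imply proportional polynomials. Here I would lean on the fact that the $T$-system is a cluster algebra and invoke the separation-of-additions / tropical-duality machinery alluded to in Remark~\ref{rem:na}: the $T$-variables are cluster variables whose $g$-vectors and $F$-polynomials are determined by the (tropical) $c$- and $g$-vector dynamics, and periodicity of the tropical data forces periodicity of the $F$-polynomials and $g$-vectors, hence of the cluster variables up to the coefficient monomial $c(v)$. Concretely, the tropical system governs the $g$-vectors, and once those are $2N$-periodic the structure of cluster variables guarantees the birational periodicity up to a monomial.

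Finally, for the concluding assertion that $c(v)=1$ under the fixed point property, I would substitute the fixed point $\rho:\VertQ\to\R_{>1}$ into the identity $T_v(t+2N)=c(v)T_v(t)$. By the defining property of a fixed point in Section~\ref{subsec:fixpt}, evaluating at $\x:=\rho$ gives $T_v(t)\mid_{\x:=\rho}=\rho(v)$ for all $t$, so both $T_v(t+2N)$ and $T_v(t)$ specialize to the same positive value $\rho(v)$; since $c(v)$ is a monomial evaluating to a positive number, we get $c(v)\mid_{\x:=\rho}=1$, and because $c(v)$ is a single Laurent monomial $\x^{\beta_v}$ this equality at a point with all coordinates exceeding $1$ is only possible (together with the constraint that the period shift be the same for all fixed points, or simply by noting $\rho(v)>1$ and taking logarithms) when $\beta_v=0$, i.e.\ $c(v)=1$.
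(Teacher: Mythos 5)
Your reading of condition (\ref{item:birational}) is off in a way that distorts both directions of the argument: in the statement, $c:\VertQ\to\R_{>0}$ is a map to positive real \emph{scalars}, not a Laurent monomial $\x^{\beta_v}$. Under the correct reading, (\ref{item:birational})$\Longrightarrow$(\ref{item:strong_tropical}) is immediate: a positive scalar multiple has the \emph{same} Newton polytope (no translation), so Lemma~\ref{lemma:tropical_birational} gives equality of the tropical values for every $\l$ with no shift constant to worry about. Your translation-by-$\beta_v$ version actually proves too little, since a genuine translate would shift $\Ttr_v$ by $\l(\beta_v)$, and your patch (``the shift constant is consistent because $\beta_v$ is $2N$-periodic'') never shows this shift vanishes. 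Similarly, equal support functions for all $\l$ force the polytopes to be \emph{equal}, not merely translates of one another.

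The genuine gap is the converse implication, from tropical periodicity to (\ref{item:birational}). You correctly identify the obstacle --- equal Newton polytopes do not force proportional Laurent polynomials --- but your proposed resolution (invoking $g$-vector/$F$-polynomial machinery and asserting that ``periodicity of the tropical data forces periodicity of the $F$-polynomials and $g$-vectors'') is precisely the claim that needs proof, and it runs into the same obstacle: the tropical system $\t^{\delta_u}$ records only support-function data of Newton polytopes, which does not determine an $F$-polynomial, and neither the paper nor your proposal establishes any link between $\t^{\delta_u}$ and $g$-vectors. The paper sidesteps all of this with an elementary trick you are missing. First, it suffices to prove $T_v(\e_v+2N)=c(v)T_v(\e_v)$, because the general case follows by substituting $x_v:=T_v(t+\e_v)$ for all $v$. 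At this time slice, $T_v(\e_v)=x_v$ is a \emph{monomial}, so one only needs to show that the Newton polytope of $T_v(\e_v+2N)$ is a single point. This is exactly what Proposition~\ref{prop:maxdeg_mindeg} delivers: condition (\ref{item:weak_tropical}) gives $\maxdeg(u,T_v(\e_v+2N))=\maxdeg(u,T_v(\e_v))=\delta_u(v)$ directly, and the same tropical system read at shifted times computes $-\mindeg$, giving $\mindeg(u,T_v(\e_v+2N))=\delta_u(v)$ as well; a Laurent polynomial whose maximal and minimal degrees agree in every variable is a scalar times a monomial, and Newton-positivity forces that scalar to be positive. Your proposal never uses Proposition~\ref{prop:maxdeg_mindeg} --- the min-degree control is the key ingredient --- and without it (or the reduction to the monomial time slice) the implication remains unproved. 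Your treatment of the final fixed-point claim is essentially correct, and under the scalar reading of $c(v)$ it collapses to the paper's one line: $\rho(v)=c(v)\rho(v)$ forces $c(v)=1$.
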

\begin{proof}
 Note that (\ref{item:strong_tropical})$\Longrightarrow$(\ref{item:weak_tropical}) is trivial. Also, (\ref{item:birational})$\Longrightarrow$(\ref{item:strong_tropical}) follows directly from Lemma~\ref{lemma:tropical_birational}.
 
 To show (\ref{item:weak_tropical})$\Longrightarrow$(\ref{item:birational}), note that it is enough to consider $t=\e_v$, because if $T_v(2N+\e_v)=c(v)T_v(\e_v)$ for all $v$ then we can get the case of arbitrary $t$ by a substitution $x_v:=T_v(t+\e_v)$ for all $v$. By Proposition~\ref{prop:maxdeg_mindeg}, (\ref{item:weak_tropical}) implies that for every $u,v\in\VertQ$, 
 \[\maxdeg(u,T_v(2N+\e_v))=\maxdeg(u,T_v(\e_v));\]
 \[\mindeg(u,T_v(2N+\e_v))=\mindeg(u,T_v(\e_v)).\]
 Therefore the Laurent polynomials $T_v(2N+\e_v)$ and $T_v(\e_v)$ differ by a scalar multiple $c(v)$. But since they are both Newton-positive, $c(v)$ is necessarily positive, so (\ref{item:birational}) follows.
 
 Now, if $\rho:\VertQ\to\R_{>1}$ is a fixed point, then substituting $x_v:=\rho(v)$ for all $v$ into $T_v(t+2N)=c(v)T_v(t)$ yields $\rho(v)=c(v)\rho(v)$ and thus $c(v)=1$.
\end{proof}

Proposition~\ref{prop:tropical_birational_summary} is important both for the rest of the proof and for computer checks: it reduces the problem of checking periodicity of the birational $T$-system to the problem of checking periodicity of $\t^{\delta_u}_v$ for every $u,v\in\VertQ$. Unlike the birational $T$-system where the values of $T_v(t)$ are multivariate Laurent polynomials, here we are dealing with small integers. As a result, this dramatically reduces the complexity of the code and the amount of time needed to check the exceptional cases (see Remark~\ref{remark:computation}).

\section{Tropical periodicity implies the strictly subadditive labeling property} \label{sec:tropsub}

\begin{proposition}\label{prop:tropical_subadditive}
 Let $Q$ be a bipartite recurrent quiver and assume that the tropical $T$-system associated with $Q$ is periodic. Then $Q$ has the strictly subadditive labeling property.
\end{proposition}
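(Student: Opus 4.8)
The plan is to extract a subadditive labeling from the dynamics in two stages: first produce, by averaging the tropical dynamics over a period, a \emph{nonnegative} labeling satisfying the non-strict subadditive inequalities, and then upgrade ``nonnegative non-strict'' to ``positive strict'' by ruling out the affine components. Throughout I may assume $G(Q)$ is connected, since the subadditive inequalities are imposed vertex by vertex and labelings on the connected components of $G(Q)$ assemble into one on $Q$; I freely use that $A_\Gamma,A_\Delta$ commute (Corollary~\ref{cor:recurrent_commuting}) and that, by Proposition~\ref{prop:tropical_birational_summary}, periodicity holds with a uniform period $2N$ for \emph{every} initial condition $\l:\VertQ\to\R$. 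By Proposition~\ref{prop:subadditive_ADE} it then suffices to show that every connected component of $\Gamma$ and of $\Delta$ is an $ADE$ Dynkin diagram.

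The first step is an averaging construction. Fix $u\in\VertQ$, run the tropical $T$-system with initial condition $\delta_u$, and set $S_u(v):=\sum\t^{\delta_u}_v(t)$, the sum over one period of the $t$ with $t+\e_v$ even. The recurrence (\ref{eq:tropical}) reads, for every $v$, $\t_v(t+1)+\t_v(t-1)=\max\big((A_\Gamma\t(t))(v),(A_\Delta\t(t))(v)\big)$. Summing it over one period of the $t$ with $t+\e_v$ odd, the left side telescopes to $2S_u(v)$ by periodicity, while $\max(\cdot,\cdot)\ge(A_\Gamma\t(t))(v)$ (and likewise for $A_\Delta$) turns the right side into $2S_u(v)\ge(A_\Gamma S_u)(v)$ and $2S_u(v)\ge(A_\Delta S_u)(v)$; that is, $S_u$ is a non-strict subadditive labeling. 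Nonnegativity is where Proposition~\ref{prop:maxdeg_mindeg} enters: since $\t^{\delta_u}_v(t)=\maxdeg(u,T_v(t))$ while $-\mindeg(u,T_v(t))=\maxdeg(u,T_v(t\mp2))$ (the sign depending on $\e_u$), summing over a full period gives $2S_u(v)=\sum_t(\maxdeg-\mindeg)(u,T_v(t))\ge0$. I then put $\nu:=\sum_u S_u$, a nonnegative non-strict subadditive labeling; it is not identically zero because the $T$-recurrence produces a genuine binomial at any non-isolated vertex, forcing positive degree-width there.

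Next I would promote $\nu$ to a positive labeling and read off a spectral bound. If $\nu(v)=0$ then $2\nu(v)=0\ge\sum_w\nu(w)$ over the $\Gamma$- and $\Delta$-neighbors forces $\nu$ to vanish at every neighbor, so by connectedness $\nu\equiv0$, a contradiction; hence $\nu>0$. Restricting to a connected component $C$ of $\Gamma$ (or $\Delta$), the vector $\nu|_C>0$ satisfies $A_C\,\nu|_C\le2\,\nu|_C$, so by Perron--Frobenius the spectral radius of $A_C$ is at most $2$. Such a connected graph is either an $ADE$ diagram (spectral radius $<2$, Theorem~\ref{thm:Vinberg}) or an affine diagram (spectral radius exactly $2$). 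Ruling out the affine case is the step I expect to be the crux: the static inequalities alone cannot exclude it (take $\Delta=\emptyset$ and $\Gamma$ an affine graph, where $\nu$ is the positive null-root), so genuine use of the dynamics is required.

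To exclude an affine component $C$, let $\psi>0$ be its Perron eigenvector, $A_C\psi=2\psi$, extended by $0$ off $C$; since $C$ is a full $\Gamma$-component, $A_\Gamma\psi=2\psi$ globally. For any initial condition, pairing the recurrence with $\psi$, using $\max(\cdot,\cdot)\ge(A_\Gamma\t(t))(\cdot)$ and $A_\Gamma\psi=2\psi$, yields for $g(t):=\sum_{w:\,t+\e_w\text{ even}}\psi(w)\,\t_w(t)$ the convexity relation $g(t+1)+g(t-1)\ge2g(t)$. A convex sequence is constant or unbounded, and unboundedness contradicts periodicity, so periodicity forces $g$ constant for every initial condition. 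The contradiction comes from an initial condition making $g$ \emph{strictly} convex at one step: choose $v_0\in C$ with a $\Delta$-neighbor, and set $\l\equiv1$ on the $\Delta$-neighbors of $v_0$ and $\l\equiv0$ on its $\Gamma$-neighbors (disjoint sets, as $\Gamma,\Delta$ share no edges); at the first step the $\Delta$-sum at $v_0$ strictly exceeds the $\Gamma$-sum, and since $\psi(v_0)>0$ the inequality for $g$ is strict, so $g$ is non-constant. In the remaining case, where no vertex of $C$ meets a $\Delta$-edge, connectedness gives $\Delta=\emptyset$, and one instead drives a vertex strongly negative (take $\l(v_0)$ large, $\l\equiv0$ on its neighbors) so that at the next step $A_\Gamma\t$ is negative at a neighbor and the $\max$ with the empty $\Delta$-sum produces the strict convexity. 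Either way $g$ is unbounded, contradicting periodicity. Hence no component is affine, every component of $\Gamma$ and $\Delta$ is $ADE$, and Proposition~\ref{prop:subadditive_ADE} delivers the strictly subadditive labeling.
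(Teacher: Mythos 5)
Your proof is correct, and its first half is exactly the paper's: your $S_u$ is the paper's $b^u$, built by the same period-averaging of $\t^{\delta_u}_v$, with nonnegativity from Proposition~\ref{prop:maxdeg_mindeg} and non-strict subadditivity from summing the recurrence and pulling the sum inside the max. Where you genuinely diverge is the strictness step. The paper never leaves the combinatorics of the $\delta_u$'s: it observes that within one period the profile $\t^{\delta_u}_\weirdstar$ passes through both $+\delta_u$ and $-\delta_u$, so for a neighbor $v$ of $u$ the maximum in the recurrence cannot be attained on the same side ($\Gamma$ versus $\Delta$) at all times of the period; hence each $b^u$ is \emph{strictly} subadditive at the neighbors of $u$, and $\nu=\sum_u b^u$ is strictly subadditive everywhere --- the proof ends there, with no appeal to Proposition~\ref{prop:subadditive_ADE}. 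You instead settle for positivity plus non-strict subadditivity, invoke Perron--Frobenius to reduce to excluding components whose adjacency matrix has spectral radius exactly $2$, and exclude those by pairing the dynamics with the eigenvector $\psi$: the paired quantity $g$ is convex, periodicity forces it constant, and a crafted initial condition makes it strictly convex at one step. This route is sound (your composition (\ref{it:tropicalPeriod})$\Rightarrow$(\ref{it:admissibleADE})$\Rightarrow$(\ref{it:strictlySubadditive}) creates no circularity, since Section~\ref{sec:subADE} is independent of Section~\ref{sec:tropsub}), but note two contrasts: it needs periodicity for \emph{arbitrary} initial data (legitimately supplied by Proposition~\ref{prop:tropical_birational_summary}), whereas the paper only ever runs the system from the $\delta_u$'s; and it imports Perron--Frobenius --- though, to your credit, not Smith's classification, since radius $<2$ yields a strictly subadditive Perron vector and hence $ADE$ by Theorem~\ref{thm:Vinberg}, while radius $=2$ is exactly the case your eigenvector argument destroys. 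What your packaging buys is a clean dichotomy and a Lyapunov-style mechanism (affine components force linear growth of $g$), which quantifies the failure of periodicity and transfers to other dynamics; what the paper's buys is brevity and self-containment. In spirit the two strictness arguments are the same trick --- forcing the max to switch sides at two different times --- yours realized by designed initial data, the paper's by the $\pm\delta_u$ phenomenon. Two small repairs you should make: the claim $\nu\not\equiv0$ deserves the one-line computation the paper records ($\maxdeg(u,T_v(2))=1>0=\mindeg(u,T_v(2))$ for $u$ a neighbor of $v$ with $\e_u=1$, $\e_v=0$, and similarly at $t=-1$ for the other parity), which in fact gives $\nu>0$ at every non-isolated vertex and lets you skip the connectedness propagation; and the one-vertex component, where $\nu$ really does vanish identically, should be dispatched as the trivial $A_1$ case.
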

\begin{proof}
 We let $N$ be an integer such that for all $u,v\in\VertQ$, $\t^{\delta_u}_v(\e_v+2N)=\t^{\delta_u}_v(\e_v)$ (by Proposition~\ref{prop:tropical_birational_summary}, this is a special case of tropical periodicity).  Now, we want to find a strictly subadditive labeling $\nu:\VertQ\to\R_{>0}$. In order to do it, we construct certain intermediate maps $b^u:\VertQ\to\R$. Following the proof of \cite[Theorem 9.1]{Reu}, we let $b^u$ be just the sum of $\maxdeg(u,T_v(t))$ over the period:
 \[b^u(v):=\sum_{j=0}^{N-1} \t^{\delta_u}_v(2j+\e_v).\]
 
 First, we want to show that $b^u(v)\geq 0$ for all $u,v\in\VertQ$. By Proposition~\ref{prop:maxdeg_mindeg}, the average of $\maxdeg(u,T_v(t))$ over $t$ equals the average of $-\mindeg(u,T_v(t))$. Since $\maxdeg(u,T_v(t))\geq \mindeg(u,T_v(t))$, we get that $b^u(v)\geq -b^u(v)$. This implies that $b^u(v)\geq 0$ for all $u,v\in\VertQ$. 
 
 In addition, if $u$ and $v$ are connected by an edge, and $\e_u=1,\e_v=0$ then $\maxdeg(u,T_v(2))=1>0=\mindeg(u,T_v(2))$ and in this case $b^u(v)>-b^u(v)$. Similarly, if $\e_u=0,\e_v=1$ then $\maxdeg(u,T_v(-1))=1>0=\mindeg(u,T_v(-1))$. Hence \emph{$b^u(v)>0$ if $u$ and $v$ are neighbors in $Q$}.
 
 Second, we claim that the labeling $b^u$ is subadditive (even though it may or may not be strictly subadditive):
 \begin{equation}\label{eq:subadditive}
 b^u(v)\geq \max\left(\sum_{w\to v} b^u(w),\sum_{v\to w} b^u(w)\right). 
 \end{equation}

 Indeed, using the periodicity of $\t^{\delta_u}_v$, we can write
 \begin{equation*}
 \begin{split}
  2b^u(v)&=\sum_{j=0}^{N-1} \t^{\delta_u}_v(2j+\e_v)+\sum_{j=1}^{N} \t^{\delta_u}_v(2j+\e_v)\\
  &= \sum_{j=0}^{N-1} (\t^{\delta_u}_v(2j+\e_v)+\t^{\delta_u}_v(2j+2+\e_v)),
  \end{split}
  \end{equation*}
  which by~\eqref{eq:tropical} equals to
  \begin{equation*}
  \begin{split}
  &=\sum_{j=0}^{N-1} \max\left(\sum_{w\to v} \t^{\delta_u}_w(2j+1+\e_v),\sum_{v\to w} \t^{\delta_u}_w(2j+1+\e_v)\right)\\
  &\geq\max\left(\sum_{j=0}^{N-1} \sum_{w\to v} \t^{\delta_u}_w(2j+1+\e_v),\sum_{j=0}^{N-1}\sum_{v\to w} \t^{\delta_u}_w(2j+1+\e_v)\right)\\
  &=\max\left(\sum_{w\to v} b^u(w),\sum_{v\to w} b^u(w)\right).
  \end{split}
 \end{equation*}

 Thus $b^u(v)$ is subadditive. The only way (\ref{eq:subadditive}) can be an equality for some vertex $v$ is when either of the following holds:
 \begin{itemize}
  \item \label{item:j1} for all $j\in\{0,1,\dots,N-1\}$,
  \begin{equation}\label{eq:all_j_1}
\sum_{w\to v} \t^{\delta_u}_w(2j+1+\e_v)\geq\sum_{v\to w} \t^{\delta_u}_w(2j+1+\e_v).   
  \end{equation}
  \item \label{item:j2} for all $j\in\{0,1,\dots,N-1\}$,
   \begin{equation}\label{eq:all_j_2}
  \sum_{w\to v} \t^{\delta_u}_w(2j+1+\e_v)\leq\sum_{v\to w} \t^{\delta_u}_w(2j+1+\e_v).
  \end{equation}
 \end{itemize}
 As we note in the proof of Proposition~\ref{prop:maxdeg_mindeg}, there are two integers $j_1$ and $j_2$ such that for all $w$,
 \[\t^{\delta_u}_w(2j_1+1+\e_v)=\delta_u(w);\]
 \[\t^{\delta_u}_w(2j_2+1+\e_v)=-\delta_u(w).\]
 Thus, if $v$ is a neighbor of $u$ then neither (\ref{eq:all_j_1}) nor (\ref{eq:all_j_2}) happens for all values of $j$ simultaneously. It follows that \emph{the inequality (\ref{eq:subadditive}) is strict when $v$ is a neighbor of $u$}. This allows us to define $\nu$ as a sum of all $b^u$'s:
 \[\nu(v):=\sum_{u\in\VertQ} b^u(v).\]
 Clearly, $\nu$ inherits subadditivity from $b^u$, and since each vertex $v$ is a neighbor of some vertex $u$, $\nu$ is strict at every vertex. By the same reason, $\nu(v)>0$ for every $v\in\VertQ$.  
\end{proof}
\begin{remark}
 The proof of \cite[Theorem 9.1]{Reu} can be applied directly to show that the periodicity of the \emph{birational} $T$-system with all $x_v$'s set to be equal to $1$ implies the strictly subadditive labeling property. More specifically, one can set $\nu(v)$ to be the logarithm of 
 \[\prod_{j=0}^{N-1} T_v(\e_v+2j)\mid_{\x:=1}.\]
 It follows that such a labeling $\nu$ is strictly subadditive. However, there are bipartite quivers for which the birational and tropical $T$-systems are periodic only for \emph{some} initial conditions. For example, if $Q$ is a quiver of type $A_2\otimes A_2$ with three copies of each arrow, one can find a ``fixed point'' $\rho:\VertQ\to\R_{>0}$ defined by $\rho(v)=1/2$ for all $v\in\VertQ$. Similarly, there is a ``strictly subadditive labeling'' $\nu:\VertQ\to\R$ defined by $\nu(v)=-1$ for all $v\in\VertQ$. Thus we see that the requirements that the fixed point must take values in $\R_{>1}$ and that the strictly subadditive labeling must take values in $\R_{>0}$ cannot be dropped. If one instead takes just two copies of each arrow in $A_2\otimes A_2$ and sets $\l(v):=1$ for all $v\in\VertQ$, one gets a periodic tropical $T$-system with $\t^\l_v(t)=1$ for all $v\in\VertQ$, $t\in\Z$ with $t+\e_v$ even. Lots of other examples with the tropical $T$-system being periodic for some $\l$ can be constructed using the duality in Section~\ref{subsec:involutions}.
\end{remark}

\section{Example: proof of the periodicity in the $A_n\otimes A_m$ case}\label{sec:AA}
Since $A_n\otimes A_m$ is an admissible $ADE$ bigraph, we know by Propositions~\ref{prop:fixpt_subadditive} and~\ref{prop:subadditive_ADE} that there is a strictly subadditive labeling $\nu:\VertQ\to\R_{>0}$ as well as a fixed point $\rho:\VertQ\to\R_{>1}$. Here $Q=Q(A_n\otimes A_m)$. By Proposition~\ref{prop:tropical_birational_summary}, we know therefore that the birational $T$-system satisfies
\begin{equation}\label{eq:birational_period}
T_v(t)=T_v(t+2N) 
\end{equation}

if and only if the tropical $T$-system satisfies 
\begin{equation}\label{eq:tropical_period}
\t^{\delta_u}_v(\e_v+2N)=\t^{\delta_u}_v(\e_v) 
\end{equation} 
for all $u,v\in\VertQ$. Thus, in order to prove (\ref{eq:birational_period}) for $Q$, we only need to check (\ref{eq:tropical_period}) for all $u,v$. 

\def\abar{{\bar a}}
\def\bbar{{\bar b}}

First, let us introduce some notation. The vertices of $Q$ can be naturally labeled by pairs of numbers $(a,b)$ with $a=1,2,\dots,n$ and $b=1,2,\dots,m$. For each such pair, we denote the corresponding vertex of $Q$ by $v(a,b)$. We define $\abar:=n+1-a$ and $\bbar:=n+1-b$ to be the coordinates of the opposite vertex of $Q$. For an illustration of the below argument, see Example~\ref{example:A3_A1} for $n=3,m=1$.

\def\SW{{ \mathrm{SW}}}
\def\SE{{ \mathrm{SE}}}
\def\NW{{ \mathrm{NW}}}
\def\NE{{ \mathrm{NE}}}

For the rest of this section we assume that $\t^\l_v(t)$ is defined for all $v\in\VertQ,t\in\Z$, and when $t+\e_v$ is odd we just put $\t^\l_v(t):=\t^\l_v(t-1)$.
\begin{lemma}
 Let $u:=v(a,b)$. Assume that $\e_u=1$. Define the following four sequences of numbers whose entries are indexed by $t=1,2,\dots,n+m$:
 \begin{eqnarray*}
 r_\SE&=&(a+b,a+b+1,\dots,n+m-1,n+m,n+m-1,\dots,\abar+\bbar);\\
 r_\NW&=&(a+b,a+b-1,\dots,3,2,3,\dots,\abar+\bbar);\\
 r_\SW&=&(b-a,b-a-1,\dots,2-n,1-n,2-n,\dots,\bbar-\abar);\\
 r_\NE&=&(b-a,b-a+1,\dots,m-2,m-1,m-2,\dots,\bbar-\abar).  
 \end{eqnarray*}

 Then for $t=1,2,\dots,n+m$,
 \[\t^{\delta_u}_{v(i,j)}(t)=\begin{cases}
                       1, &\text{if $r_\NW(t)\leq i+j\leq r_\SE(t)$ and $r_\SW(t)\leq j-i\leq r_\NE(t)$;}\\
                       0, &\text{otherwise.}
                      \end{cases}\]
\end{lemma}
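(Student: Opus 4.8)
The plan is to prove the formula by induction on $t$, verifying that the proposed indicator function satisfies the tropical recurrence~\eqref{eq:tropical}. First I would pass to the rotated coordinates $\sigma := i+j$ and $\delta := j-i$ attached to the vertex $v(i,j)$ of $Q(A_n\otimes A_m)$. Declaring the edges along the $A_n$-factor (those changing $i$) to be red and those along the $A_m$-factor (changing $j$) to be blue, the recurrence at $v(i,j)$ reads
\[\t^{\delta_u}_{v(i,j)}(t+1)+\t^{\delta_u}_{v(i,j)}(t-1)=\max\!\bigl(\t^{\delta_u}_{v(i-1,j)}(t)+\t^{\delta_u}_{v(i+1,j)}(t),\ \t^{\delta_u}_{v(i,j-1)}(t)+\t^{\delta_u}_{v(i,j+1)}(t)\bigr),\]
with any term whose index leaves the grid $\{1\le i\le n,\ 1\le j\le m\}$ read as $0$. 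In $(\sigma,\delta)$-coordinates the red neighbours of $(\sigma,\delta)$ are $(\sigma\mp1,\delta\pm1)$ and the blue neighbours are $(\sigma\pm1,\delta\pm1)$, and the claimed support is exactly the set of grid vertices whose $(\sigma,\delta)$ lies in the product of integer intervals $I_\sigma(t)\times I_\delta(t)$, where $I_\sigma(t)=[r_\NW(t),r_\SE(t)]$ and $I_\delta(t)=[r_\SW(t),r_\NE(t)]$. This is an axis-aligned rectangle in $(\sigma,\delta)$, hence a diamond in $(i,j)$ centred at $u$; by construction each of the four endpoint sequences moves at unit speed and reflects once off one of the extreme diagonals $\sigma=2$, $\sigma=n+m$, $\delta=1-n$, $\delta=m-1$, which are precisely the four corners $(1,1),(n,m),(n,1),(1,m)$ of the grid. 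The hypothesis $1\le t\le n+m$ guarantees that each endpoint has reflected \emph{at most once}, which is what keeps all values in $\{0,1\}$.

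Writing $P_\pm:=\mathbf 1[\sigma\pm1\in I_\sigma(t)]$, $Q_\pm:=\mathbf 1[\delta\pm1\in I_\delta(t)]$, $S_\pm:=\mathbf 1[\sigma\in I_\sigma(t\pm1)]$, $D_\pm:=\mathbf 1[\delta\in I_\delta(t\pm1)]$, the recurrence at a vertex all of whose four neighbours lie in the grid is equivalent to the single pointwise identity
\[S_+D_++S_-D_-=\max\!\bigl(P_-Q_++P_+Q_-,\ P_-Q_-+P_+Q_+\bigr).\]
Since the difference of the two arguments of the max equals $(P_+-P_-)(Q_+-Q_-)$, the right-hand side can be evaluated explicitly from $P_\pm,Q_\pm$, so the identity becomes a finite check once the interval endpoints are substituted. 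The base data $R(0)=\emptyset$ and $I_\sigma(1)\times I_\delta(1)=\{(a+b,\,b-a)\}=\{u\}$ reproduce the initial conditions $\t^{\delta_u}_v(\e_v)=\delta_u(v)$ for the even and odd vertices respectively, which seeds the induction.

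The bulk of the argument is the inductive step, which I would organize by the position of $(i,j)$ relative both to the moving diamond and to the grid boundary. For vertices far from every grid edge and corner, one substitutes the explicit (growing, or post-reflection shrinking) endpoints and checks the displayed identity directly, splitting into the cases where $(\sigma,\delta)$ lies strictly inside $R(t)$, on a side, at a tip, or just outside; here the two coordinates essentially decouple through the relations $S_++S_-=P_++P_-$ and $D_++D_-=Q_++Q_-$ for a unit-speed interval. \textbf{The main obstacle is the boundary analysis.} For vertices on a grid edge exactly one neighbour is absent, and at the four grid corners two neighbours are absent and an endpoint sequence reflects; there the off-grid terms must be read as $0$ even when the corresponding interval indicators $P_\pm Q_\pm$ are nonzero, so the plain interior identity fails and it is precisely the reflection built into $r_\SE,r_\NW,r_\SW,r_\NE$ that restores balance. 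I expect to verify this separately at each of the four edges and four corners, in both the growth and the post-reflection phases, tracking which missing neighbour vanishes.

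Finally I would dispatch the parity bookkeeping hidden in the extension convention $\t^{\delta_u}_v(t):=\t^{\delta_u}_v(t-1)$ for $t+\e_v$ odd. Since $\e_{v(i,j)}\equiv \sigma-(a+b)+1\pmod 2$, a vertex is \emph{inactive} at time $t+1$ exactly when $\sigma\equiv t+a+b+1\pmod2$; on the other hand every lattice point of the symmetric difference $R(t+1)\,\triangle\,R(t)$ lies on an extreme diagonal $\sigma=\text{const}$ or $\delta=\text{const}$ of $R$ and therefore (using $\sigma\equiv\delta$ and $b-a\equiv a+b$) has the opposite, \emph{active}, parity $\sigma\equiv t+a+b$. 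Hence the formula automatically assigns each inactive vertex equal values at $t+1$ and $t$, so it is consistent with the extension convention, and the induction only needs to be run at active vertices. Combining the interior check, the boundary case analysis, and this parity observation completes the proof.
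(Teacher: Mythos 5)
Your strategy---induction on $t$ via the defining relation~\eqref{eq:tropical}, in the rotated coordinates $\sigma=i+j$, $\delta=j-i$---is exactly the paper's approach (its entire proof is the sentence ``follows by induction from the defining relations''), and your interior identity, the reduction via $(P_+-P_-)(Q_+-Q_-)$, and the observation that all failures of the naive identity are confined to the grid boundary are sound.

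There is, however, one concrete error, and it is load-bearing: the endpoint sequences do \emph{not} bounce off their extremes at unit speed, as your phrase ``moves at unit speed and reflects once'' and your two-phase (``growing, or post-reflection shrinking'') case split assume. Each sequence stays at its extreme value for \emph{two consecutive times}. This is forced by the statement itself: a unit-speed path from $a+b$ up to $n+m$ and back down to $\abar+\bbar=(n+1-a)+(m+1-b)$ has only $n+m-1$ terms rather than $n+m$, and a pure bounce would end at $n+m-a-b+1\neq\abar+\bbar$. Concretely, for $A_3\otimes A_1$ with $u$ the middle vertex one has $r_\SE=(3,4,4,3)$, not $(3,4,3,2)$; the paper's own Table~\ref{tab:tropical_delta} confirms this, since under the convention $\t^{\delta_u}_v(t)=\t^{\delta_u}_v(t-1)$ the support at $t=3$ consists of all three vertices, which the bouncing rectangle fails to contain. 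The pause is precisely what makes your parity argument true: under a pure bounce, the points removed in a shrinking phase lie on the diagonal $\sigma=r_\SE(t)\equiv a+b+t+1\pmod 2$, i.e.\ they have the \emph{inactive} parity at time $t+1$, so your claim that every point of $R(t+1)\,\triangle\,R(t)$ is active---and with it the consistency with the extension convention, which your induction needs at every step---would be false. With the two-step pause the parities work out, but then your verification must include a third, ``pause'' phase in both the interior and the boundary/corner checks. As written, your induction breaks at the first time an endpoint reaches an extreme diagonal; the fix is to read off the correct (paused) sequences before running the case analysis.
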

\begin{proof}
 Follows by induction from the defining relations of the tropical $T$-system.
\end{proof}
\def\weirdstar{{\ast}}
Thus, for the case $\e_u=1$, we get the following dynamics. For $t=0,1$, $\t^{\delta_u}_\weirdstar(t)=\delta_u$ viewed as a map $v\mapsto \t^{\delta_u}_v(t)$. Next, for $t=1,2,\dots,n+m$, $\t^{\delta_u}_\weirdstar(t)$ is filled with ones inside a certain rectangle bounded by four moving diagonals and zero outside. Each of these diagonals moves away from the vertex $v(a,b)$ until it hits the corner of the rectangle, and it starts moving back immediately after that until the four diagonals meet at the vertex $v(\abar,\bbar)$. Next, 
\[\t^{\delta_u}_\weirdstar(n+m+1)=\t^{\delta_u}_\weirdstar(n+m+2)=\t^{\delta_u}_\weirdstar(n+m+3)=-\delta_{v(\abar,\bbar)}\]
since we take the maximum in the definition of the tropical $T$-system and thus the negative value does not propagate. For $t=n+m+3,\dots,2(n+m+1)$ we see our process in reverse, that is, 
\[\t^{\delta_{v(a,b)}}_\weirdstar(t+n+m+2)=\t^{\delta_{v(\abar,\bbar)}}_\weirdstar(t)\]
for all $t\in\Z$, just because they coincide for $t=0,1$. Thus,
\[\t^{\delta_u}_\weirdstar(t+2(n+m+2))=\t^{\delta_u}_\weirdstar(t)\]
for all $u\in\VertQ$.  For the case $\e_u=0$, we start by mutating the value at $u$ into a $-1$, so time in this case goes in the reverse direction. Note that the Coxeter numbers for $Q$ from Proposition~\ref{prop:coxeter} are $h=n+1$ and $h'=m+1$ so our tropical $T$-system satisfies 
\[\t^{\delta_u}_v(\e_v+2(h+h'))=\t^{\delta_u}_v(\e_v).\]
Since a fixed point exists for $Q$, we deduce from Proposition~\ref{prop:tropical_birational_summary} that the same relation is satisfied by the birational $T$-system. 

\section{Admissible $ADE$ bigraphs have the tropical periodicity property} \label{sec:ADEtrop}
Stembridge's classification~\cite{S} contains five infinite families, one of them being tensor products of pairs of Dynkin diagrams, for which the periodicity was shown by Keller in~\cite{K}. The second family, twists $\Lambda\times\Lambda$, give rise to periodic $T$-systems by Corollary~\ref{cor:twists}. A crucial observation is that the remaining three families possess rich symmetries. In particular, recall from Section~\ref{subsec:classification} that each of them was constructed using the involutions described in Section~\ref{sec:ADEDynkin}. We are going to extend these involutions to Stembridge's bigraphs and show that the operation of taking the dual with respect to such an involution ``commutes'' with applying tropical mutations.

\def\Fix{{ \operatorname{Fix}}}
\def\iinv{{\i}}
\def\jinv{{\mathbf{j}}}
\def\gg{{\bar\Gamma}}
\def\dd{{\bar\Delta}}

\subsection{Symmetric labeled bigraphs}\label{subsec:involutions}
The right combinatorial object for us turns out to be rather involved. 

\begin{definition}
 A \emph{symmetric labeled bigraph} $B:=(G,\iinv,V_+,V_0,V_-,\rho,X)$ consists of the following data:
 \begin{enumerate}
  \item $G=(\Gamma,\Delta)$ is a bipartite bigraph with vertex set $V$.
  \item $\iinv:V\to V$ is an automorphism of $G$ of order $2$ that preserves the colors of vertices and edges. 
  \item $V_0=\Fix(\iinv)$ is the set of vertices fixed by $\iinv$. 
  \item We have a partition 
  \[V=V_+\sqcup V_0\sqcup V_-\]
  such that whenever $u\neq v,\ u=\iinv(v)$ then either $u\in V_+,v\in V_-$ or $u\in V_-$ and $v\in V_+$. 
  \item $\rho:V\to \R$ is an \emph{$\iinv$-symmetric} labeling of vertices of $G$. That is, for any $v\in V$ we have 
  \[\rho(\iinv(v))=\rho(v).\]
  \item Finally, $X$ is an arbitrary set of edges $(u,v)$ that are \emph{fixed} by $\iinv$. Since $\iinv$ preserves the color of each vertex, an edge $(u,v)$ is fixed by $\iinv$ if and only if $u=\iinv(u)$ and $v=\iinv(v)$. 
 \end{enumerate}
\end{definition}
{\sloppy
Assume that $V_+=\{v_1^+,\dots,v_k^+\}$, $V_-=\{v_1^-,\dots,v_k^-\}$, and $V_0=\{v_{k+1},\dots,v_n\}$ with $\iinv(v_j^+)=v_j^-$ for all $j=1,\dots, k$ and $\iinv(v_j)=v_j$ for all $j=k+1,\dots, n$. Given a symmetric labeled bigraph $B$, we would like to construct its \emph{dual} symmetric labeled bigraph $B^*=(G^*,\iinv^*,U_+,U_0,U_-,\rho^*,X^*)$. This dual symmetric labeled bigraph is defined via the following rules. First, for each vertex $v_j$ of $B$ that is fixed by $\iinv$ we introduce two vertices $u_j^+$ and $u_j^-$ of $B^*$, and for every two vertices $v_j^+$ and $v_j^-$ of $B$ that are mapped to each other by $\iinv$, 
we introduce just one vertex $u_j$ of $B$. Thus
\[U_0=\{u_1,\dots,u_k\};\quad U_+=\{u_{k+1}^+,\dots,u_{n}^+\};\quad  U_-=\{u_{k+1}^-,\dots,u_{n}^-\}.\]
The involution $\iinv^*$ is defined by $\iinv^*(u_j)=u_j$ for $j=1,\dots, k$ and $\iinv^*(u_j^+)=u_j^-$ for $j=k+1,\dots,n$. Define $U:=U_+\sqcup U_0\sqcup U_-$ to be the set of vertices of $G^*$. The labeling $\rho^*$ is such that
\[\rho^*(u_j)=\sqrt2 \rho(v_j^+)=\sqrt2 \rho(v_j^-),\text{ for $j=1,\dots,k$;}\] \[\rho^*(u_j^+)=\rho^*(u_j^-)=\rho(v_j)/\sqrt2,\text{ for $j=k+1,\dots,n$}.\]
Finally, let us define the bigraph $G^*=(\Gamma^*,\Delta^*)$. Note that since $\iinv$ preserves the color of the vertices, $v_j^+$ and $v_j^-$ are never connected by an edge in $G$. }

We construct $\Gamma^*$ and $X^*\cap \Gamma^*$ by the following set of rules (which is $180^\circ$-symmetric):
\begin{eqnarray*}
 (v_i,v_j)\in \Gamma,\ (v_i,v_j)\in X        &\Longleftrightarrow& (u_i^+,u_j^-),(u_i^-,u_j^+)\in \Gamma^*\\
 (v_i,v_j)\in \Gamma,\ (v_i,v_j)\not\in X    &\Longleftrightarrow& (u_i^+,u_j^+),(u_i^-,u_j^-)\in \Gamma^*\\
 (v_i^+,v_j),(v_i^-,v_j)\in \Gamma           &\Longleftrightarrow& (u_i^+,u_j),(u_i^-,u_j)\in \Gamma^*\\
 (v_i^+,v_j^+),(v_i^-,v_j^-)\in \Gamma       &\Longleftrightarrow& (u_i,u_j)\in \Gamma^*,\ (u_i,u_j)\not\in X^* \\
 (v_i^+,v_j^-),(v_i^-,v_j^+)\in \Gamma       &\Longleftrightarrow& (u_i,u_j)\in \Gamma^*,\ (u_i,u_j)\in X^* .
\end{eqnarray*}
\def\B{{b}}
The definition of $\Delta^*$ and $X^*\cap\Delta^*$ is completely analogous. Thus we have defined $B^*$, the dual of a symmetric labeled bigraph $B$. It immediately follows from the construction that $B^*$ is also a symmetric labeled bigraph and that $(B^*)^*=B$. We define a \emph{symmetric unlabeled bigraph} to be a six-tuple $\B=(G,\iinv,V_+,V_0,V_-,X)$ satisfying the conditions above (so we omit the labeling $\rho$). The construction of the dual restricts to symmetric unlabeled bigraphs with the same property that $(\B^*)^*=\B$. 

\begin{example}
 Let $\B=(A_{2n-1},\i,V_+,V_0,V_-,\emptyset)$ be a symmetric unlabeled bigraph of type $A_{2n-1}$. That is, $\Delta=\emptyset$ and $\i$ is the involution described in Section~\ref{sec:ADEDynkin}. Then from the above discussion it follows that $\B^*=(D_{n+1},\i,U_+,U_0,U_-,\emptyset)$ is a symmetric unlabeled bigraph of type $D_{n+1}$, see Figure~\ref{fig:zper16}. Here, 
 \[|V_0|=|U_+|=|U_-|=1;\quad |V_+|=|V_-|=|U_0|=n-1.\]
   \begin{figure}
    \begin{center}
\vspace{-.1in}
\scalebox{1}{
\input{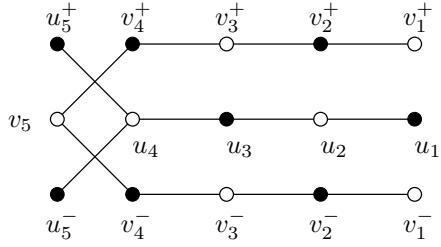} 
}
\vspace{-.1in}
    \end{center} 
    \caption{An example for $n=5$.}
    \label{fig:zper16}
\end{figure}
  More elaborate examples will be considered in Section~\ref{subsec:reduction}.
\end{example}

\begin{proposition}\label{prop:duality}
 Let $B:=(G,\iinv,V_+,V_0,V_-,\rho,X)$ be a  symmetric labeled bigraph, let $B^*=(G^*,\iinv^*,U_+,U_0,U_-,\rho^*,X^*)$ be its dual. Let $v_i\in V_0$ be a vertex fixed by $\iinv$. Define $\l:V\to\R$ and $\l^*:U\to\R$ so that 
 \[\l(v_j)=\rho(v_j);\ \l(v_j^\pm)=\rho(v_j^\pm);\ \l^*(u_j)=\rho^*(u_j);\ \l^*(u_j^\pm)=\rho^*(u_j^\pm)\]
 for all $j\neq i$ and 
 \begin{eqnarray}
 \label{eq:lambda_def}\l(v_i)+\rho(v_i)&=&\max\left(\sum_{(w,v_i)\in\Gamma}\rho(w),\sum_{(v_i,w)\in\Delta}\rho(w)\right);\\
 \label{eq:lambda_def_star}\l^*(u_i^\pm)+\rho^*(u_i^\pm)&=&\max\left(\sum_{(w,u_i^\pm)\in\Gamma^*}\rho^*(w),\sum_{(u_i^\pm,w)\in\Delta^*}\rho^*(w)\right).
 \end{eqnarray}
In other words, $\l$ (resp., $\l^*$) is a result of a tropical mutation at vertex $v_i$ (resp., at vertices $u_i^+,u_i^-$). Then the bigraphs $(G,\iinv,V_+,V_0,V_-,\l,X)$ and $(G^*,\iinv^*,U_+,U_0,U_-,\l^*,X^*)$ are also dual to each other.
\end{proposition}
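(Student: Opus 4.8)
The plan is to observe that passing from the pair $(B,B^*)$ to the mutated pair changes \emph{only the labelings}, $\rho\rightsquigarrow\l$ and $\rho^*\rightsquigarrow\l^*$; the underlying symmetric unlabeled bigraphs $(G,\iinv,V_+,V_0,V_-,X)$ and $(G^*,\iinv^*,U_+,U_0,U_-,X^*)$ are untouched, and these are dual by hypothesis. Since the dual construction recovers the labeling of $B^*$ from that of $B$ purely through the two scaling rules $\rho^*(u_j)=\sqrt2\,\rho(v_j^+)$ for the merged pairs ($j\le k$) and $\rho^*(u_j^\pm)=\rho(v_j)/\sqrt2$ for the split fixed vertices ($j>k$), it suffices to check that the \emph{same} two rules relate $\l$ and $\l^*$. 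For every index $j\neq i$ we have $\l=\rho$ and $\l^*=\rho^*$ at the relevant vertices, so these relations are inherited from $B,B^*$. As $v_i\in V_0$ is fixed (so $i>k$) and corresponds to the pair $u_i^+,u_i^-$, the only relation left to verify is $\l^*(u_i^\pm)=\l(v_i)/\sqrt2$.

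Because $\rho^*(u_i^\pm)=\rho(v_i)/\sqrt2$ already, this reduces to showing that the neighbor sums entering the tropical mutation at $u_i^+$ equal $1/\sqrt2$ times those at $v_i$. First I classify the $\Gamma$-neighbors of the fixed vertex $v_i$: each is either another fixed vertex $v_b$ (with $\{v_i,v_b\}$ in $X$ or not), or half of an $\iinv$-orbit pair $v_b^+,v_b^-$, which by $\iinv$-symmetry occur together with $\rho(v_b^+)=\rho(v_b^-)$. Reading off the rules for $\Gamma^*$, each fixed $\Gamma$-neighbor $v_b$ gives $u_i^+$ exactly one neighbor, namely $u_b^-$ if $\{v_i,v_b\}\in X$ (first rule) and $u_b^+$ otherwise (second rule), of label $\rho^*(u_b^\pm)=\rho(v_b)/\sqrt2$; and each pair-neighbor $v_b^\pm$ gives $u_i^+$ the single neighbor $u_b$ (third rule), of label $\rho^*(u_b)=\sqrt2\,\rho(v_b^+)$. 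Hence
\[\sum_{(w,u_i^+)\in\Gamma^*}\rho^*(w)=\sum_{v_b\text{ fixed},\,v_b\sim_\Gamma v_i}\frac{\rho(v_b)}{\sqrt2}+\sum_{v_b^\pm\text{ pair},\,v_b^\pm\sim_\Gamma v_i}\sqrt2\,\rho(v_b^+)=\frac1{\sqrt2}\sum_{(w,v_i)\in\Gamma}\rho(w),\]
since the fixed-neighbor terms scale by $1/\sqrt2$ while the pair-neighbor terms scale by $\sqrt2/2=1/\sqrt2$ (the factor $\sqrt2$ compensating the factor $2$ that an $\iinv$-orbit pair contributes to the sum at $v_i$). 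The analogous rules for $\Delta^*$ give $\sum_{(u_i^+,w)\in\Delta^*}\rho^*(w)=\frac1{\sqrt2}\sum_{(v_i,w)\in\Delta}\rho(w)$.

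Taking the maximum and using that $\max$ commutes with multiplication by the positive scalar $1/\sqrt2$, I get $\l^*(u_i^+)+\rho^*(u_i^+)=\frac1{\sqrt2}\bigl(\l(v_i)+\rho(v_i)\bigr)$, and subtracting $\rho^*(u_i^+)=\rho(v_i)/\sqrt2$ yields $\l^*(u_i^+)=\l(v_i)/\sqrt2$; the identical computation at $u_i^-$ (or $\iinv^*$-symmetry) gives $\l^*(u_i^-)=\l(v_i)/\sqrt2$. This is exactly the missing duality relation. It also confirms that the mutations are well defined and preserve symmetry: $u_i^+$ and $u_i^-$ share a color, hence are non-adjacent, so their mutations commute and neither enters the other's sum, and since $\l^*(u_i^+)=\l^*(u_i^-)$ and $v_i$ is $\iinv$-fixed, both $\l$ and $\l^*$ remain symmetric. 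I expect the main obstacle to be purely bookkeeping: correctly matching the five edge-types of the duality rules against the neighbors created at $u_i^\pm$, and checking that the two competing scalings — $1/\sqrt2$ for split fixed vertices versus $\sqrt2$ for merged pairs, against the compensating factor $2$ from $\iinv$-orbit pairs — conspire into the single uniform factor $1/\sqrt2$ that lets $\max$ pass through cleanly.
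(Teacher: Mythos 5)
Your proof is correct and takes essentially the same approach as the paper's: reduce everything to the single relation $\l(v_i)=\sqrt2\,\l^*(u_i^\pm)$, then verify by a case analysis on the neighbors of $v_i$ (fixed vertices versus $\iinv$-orbit pairs, matched through the duality rules for $\Gamma^*$ and $\Delta^*$) that each neighbor sum at $u_i^\pm$ is $1/\sqrt2$ times the corresponding sum at $v_i$, so the maximum scales accordingly. If anything, you are more explicit than the paper about the compensation between the factor $2$ contributed by an $\iinv$-orbit pair at $v_i$ and the $\sqrt2$ scaling of the merged vertex, and about $\l$, $\l^*$ remaining symmetric.
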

\begin{proof}
 We only need to show that $\l(v_i)=\sqrt2 \l^*(u_i^+)$. We already know that the same holds for $\rho(v_i)$, so it remains to show that the right hand side of (\ref{eq:lambda_def}) is  $\sqrt2$ times greater than the right hand side of (\ref{eq:lambda_def_star}). Consider a vertex $w$ such that $(w,v_i)\in\Gamma$. If $w\in V_0$ then it means that there are two vertices $w^+$ and $w^-$ in $G^*$ such that either $(w^+,u_i^+)\in\Gamma^*$ or $(w^-,u_i^+)\in\Gamma^*$, but not both. Thus the contribution of $w$ to the sum is $\rho(w)$ while the combined contribution of $w^+$ and $w^-$ is $\rho(w^+)=\rho(w)/\sqrt2$. Now, if $w\in V_+$ or $w\in V_-$ then $\rho(w)=\rho(w^*)/\sqrt2$ where $w^*$ is the vertex of $U_0$ that corresponds to both $w$ and $\iinv(w)$. Since $\rho(w^*)=\sqrt2\rho(w)$, we are done again.
\end{proof}

Since the tropical $T$-system consists of applying tropical mutations to the vertices of $G$, we get an immediate corollary:
\begin{corollary}\label{cor:duality}
 Let $B:=(G,\iinv,V_+,V_0,V_-,\rho,X)$ be a  symmetric labeled bigraph, let $B^*=(G^*,\iinv^*,U_+,U_0,U_-,\rho^*,X^*)$ be its dual. Assume that both $G$ and $G^*$ are recurrent. Then 
 \[\t^{\rho}_v(t+2N)=\t^{\rho}_v(t)\ \forall\,v\in V\Longleftrightarrow \t^{\rho^*}_u(t+2N)=\t^{\rho^*}_u(t)\ \forall\,u\in U,\]
 where $\t^\rho$ is the tropical $T$-system for $G$ and $\t^{\rho^*}$ is the tropical $T$-system for $G^*$. 
\end{corollary}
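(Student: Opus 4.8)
The plan is to prove the corollary by induction on the time evolution, using Proposition~\ref{prop:duality} as the inductive step: a single tropical half-step of the $T$-system commutes with passing to the dual symmetric labeled bigraph. Since the tropical $T$-system is by definition nothing but the repeated application of the tropical mutations $\mu_\bullet,\mu_\circ$, this forces the two trajectories on $G$ and on $G^*$ to remain dual to one another for all time, after which the equivalence of the two periodicity statements is immediate. The base case is the duality of $B$ and $B^*$ themselves, which supplies the initial labelings $\rho$ and $\rho^*$.

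Concretely, I would first record that because $\iinv$ is a color-preserving automorphism of $G$ and the initial labeling $\rho$ is $\iinv$-symmetric, the entire tropical trajectory $\{\t^\rho_v(t)\}$ on $G$ is $\iinv$-symmetric: each half-step $\mu_\bullet$ or $\mu_\circ$ mutates a full color class, which is $\iinv$-invariant, and tropical mutation is equivariant for the automorphism $\iinv$. Hence at every time the labeling takes equal values on $v$ and $\iinv(v)$, so it remains a labeling of a symmetric labeled bigraph with the same underlying data $(G,\iinv,V_+,V_0,V_-,X)$. I would then decompose one half-step on $G$ into its $\iinv$-orbits: the mutations at the fixed vertices of $V_0$ of the given color, and the simultaneous mutations at the symmetric pairs $v_j^+,v_j^-$ in $V_\pm$ of that color. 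These orbit-mutations commute, since vertices of one color are pairwise non-adjacent and the two members of a symmetric pair are non-adjacent as well, so the half-step may be performed one orbit at a time. Proposition~\ref{prop:duality} says precisely that mutating a fixed vertex $v_i\in V_0$ turns a dual pair $(B,B^*)$ into a dual pair again, the dual operation being the simultaneous mutation at $u_i^+,u_i^-$; applying the same proposition to $B^*$ (using $(B^*)^*=B$) shows that mutating a symmetric pair $v_j^+,v_j^-$ corresponds dually to mutating the single fixed vertex $u_j\in U_0$. Thus each orbit-mutation on $G$ is matched by the dual orbit-mutation on $G^*$, and the labeling on $G$ stays dual, through every intermediate stage, to the induced labeling on $G^*$. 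Crucially, the scaling factors $\sqrt2$ relating $\rho$ and $\rho^*$ are fixed once and for all and do not evolve in time, so ``$\t^\rho$ returns to its initial values after $2N$ half-steps'' holds if and only if the corresponding statement holds for $\t^{\rho^*}$, with the same $N$.

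The one point that requires genuine care, and which I expect to be the main obstacle, is checking that a complete half-step on $G$ corresponds to a complete half-step on $G^*$, i.e.\ that the bijection between $\iinv$-orbits of $V$ and vertices of $U$ carries one color class of $G$ onto one color class of $G^*$. Here I would use the hypothesis that $G^*$ is recurrent, hence bipartite, to produce a bipartition $\e^*$ of $U$. Because $\iinv$ preserves the coloring $\e$ of $G$, $\e$ descends to a $2$-coloring of the $\iinv$-orbits of $V$, which transports along the orbit/duality bijection to a $2$-coloring of $U$. A short inspection of the five edge rules defining $\Gamma^*$ together with the analogous rules for $\Delta^*$ shows that adjacent vertices of $G^*$ receive opposite transported colors, so this transported coloring is a valid bipartition of $G^*$; being bipartite, $G^*$ has an essentially unique bipartition on each connected component, so the transported coloring agrees with $\e^*$ up to an irrelevant global swap. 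Consequently a half-step $\mu_\circ$ (resp.\ $\mu_\bullet$) on $G$ is matched by a single half-step on $G^*$, and the induction on the number of half-steps then completes the proof.
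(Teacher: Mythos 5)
Your proof is correct and takes essentially the same approach as the paper: an induction on the number of tropical mutation steps, with Proposition~\ref{prop:duality} as the inductive step, showing that the labeled bigraphs obtained by running the tropical $T$-systems on $G$ and on $G^*$ remain dual to each other at every time $t$, whence the two periodicity statements are equivalent. The points you elaborate --- decomposing a half-step into commuting orbit mutations, invoking $(B^*)^*=B$ to dualize mutations at symmetric pairs, and checking that the bipartitions correspond under the orbit bijection --- are precisely the details the paper leaves implicit in its one-sentence proof.
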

\begin{proof}
Using Proposition~\ref{prop:duality} as the induction step, we see that the symmetric labeled bigraphs 
\[(G,\iinv,V_+,V_0,V_-,\t^{\rho}_\weirdstar(t),X)\quad \text{and}\quad (G^*,\iinv^*,U_+,U_0,U_-,\t^{\rho^*}_\weirdstar(t),X^*)\] 
are dual to each other.
\end{proof}

Since we would like to show that $N$ can be set to $h(G)+h'(G)$, and we already know that for tensor products, the following proposition will be useful later.
\begin{proposition}\label{prop:coxeter_symmetric}
 Let $B:=(G,\iinv,V_+,V_0,V_-,\rho,X)$ be a  symmetric labeled bigraph, let $B^*=(G^*,\iinv^*,U_+,U_0,U_-,\rho^*,X^*)$ be its dual. If $G$ and $G^*$ are admissible $ADE$ bigraphs then the corresponding Coxeter numbers of $G$ and $G^*$ from Proposition~\ref{prop:coxeter} are equal:
 \[h(G)=h(G^*);\quad h'(G)=h'(G^*).\]
\end{proposition}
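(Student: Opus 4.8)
The plan is to convert the equality of Coxeter numbers into an equality of spectral radii of adjacency matrices, and then transport a Perron--Frobenius eigenvector across the duality. Recall from the proof of Proposition~\ref{prop:subadditive_ADE} that for an admissible $ADE$ bigraph the common red Coxeter number $h$ is characterized spectrally: the largest eigenvalue of the adjacency matrix of every red connected component equals $2\cos(\pi/h)$, and likewise every blue component has largest eigenvalue $2\cos(\pi/h')$. Since $t\mapsto 2\cos(\pi/t)$ is strictly increasing, hence injective, for real $t\ge 2$, it therefore suffices to produce a single strictly positive vector that is simultaneously an eigenvector of $A_{\Gamma^*}$ with eigenvalue $2\cos(\pi/h(G))$ and of $A_{\Delta^*}$ with eigenvalue $2\cos(\pi/h'(G))$: by Perron--Frobenius such a vector forces each red (resp.\ blue) component of $G^*$ to have largest eigenvalue $2\cos(\pi/h(G))$ (resp.\ $2\cos(\pi/h'(G))$), so that every such component is an $ADE$ diagram of Coxeter number $h(G)$ (resp.\ $h'(G)$), whence $h(G^*)=h(G)$ and $h'(G^*)=h'(G)$.

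First I would produce the eigenvector on $G$. By the construction of Stembridge used in the proof of Proposition~\ref{prop:subadditive_ADE}, there is a strictly positive common eigenvector $\xi\colon V\to\R_{>0}$ with $A_\Gamma\xi=2\cos(\pi/h(G))\,\xi$ and $A_\Delta\xi=2\cos(\pi/h'(G))\,\xi$. Because $\iinv$ is a color-preserving automorphism of $G$, its permutation matrix commutes with $A_\Gamma$ and $A_\Delta$, so $\xi\circ\iinv$ is again a strictly positive common eigenvector with the same two eigenvalues; replacing $\xi$ by $\xi+\xi\circ\iinv$ I may assume $\xi$ is $\iinv$-symmetric, so that $\xi(v_j^+)=\xi(v_j^-)$ for all $j$. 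I then transport $\xi$ to a vector $\xi^*\colon U\to\R_{>0}$ by exactly the rule that passes from $\rho$ to $\rho^*$:
\[\xi^*(u_j)=\sqrt2\,\xi(v_j^+)\quad(1\le j\le k),\qquad \xi^*(u_j^{\pm})=\xi(v_j)/\sqrt2\quad(k<j\le n),\]
which is again strictly positive.

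The heart of the argument is to verify that $\xi^*$ is a common eigenvector of $A_{\Gamma^*}$ and $A_{\Delta^*}$ with eigenvalues $2\cos(\pi/h(G))$ and $2\cos(\pi/h'(G))$. This is precisely the neighbor-sum computation carried out in the proof of Proposition~\ref{prop:duality}: for any $\iinv$-symmetric labeling, a split vertex $v_i\in V_0$ and its image $u_i^{+}$ satisfy $\sum_{(w,v_i)\in\Gamma}\xi(w)=\sqrt2\sum_{(w,u_i^{+})\in\Gamma^*}\xi^*(w)$ (and analogously for $u_i^{-}$ and for $\Delta$), while applying the same computation to the dual $B^*$ gives, for a merged vertex, $\sum_{(w,u_j)\in\Gamma^*}\xi^*(w)=\sqrt2\sum_{(w,v_j^{+})\in\Gamma}\xi(w)$. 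Feeding $A_\Gamma\xi=2\cos(\pi/h(G))\,\xi$ into these identities and using the definition of $\xi^*$ gives at a split image $\sum_{(w,u_i^{+})\in\Gamma^*}\xi^*(w)=\tfrac{1}{\sqrt2}\,2\cos(\pi/h(G))\,\xi(v_i)=2\cos(\pi/h(G))\,\xi^*(u_i^{+})$ and at a merged vertex $\sum_{(w,u_j)\in\Gamma^*}\xi^*(w)=\sqrt2\,2\cos(\pi/h(G))\,\xi(v_j^{+})=2\cos(\pi/h(G))\,\xi^*(u_j)$. Hence $A_{\Gamma^*}\xi^*=2\cos(\pi/h(G))\,\xi^*$, and the blue case is identical with $h'(G)$ in place of $h(G)$. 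Since $\xi^*>0$, the Perron--Frobenius step of the first paragraph then applies to each component of $\Gamma^*$ and of $\Delta^*$, completing the proof.

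The one place that genuinely requires care — and the step I expect to be the main obstacle — is the bookkeeping in the third paragraph: confirming that the five-case correspondence defining $(\Gamma^*,X^*)$ (and its $\Delta^*$ analogue) makes the neighbor sums scale by exactly $\sqrt2$ in both directions, split and merged, for both colors. This is exactly what is established in the proof of Proposition~\ref{prop:duality}, so I would reuse that computation rather than reprove it; everything else, namely the symmetrization of $\xi$ and the Perron--Frobenius conclusion, is routine.
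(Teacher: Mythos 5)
Your proof is correct, but it takes a genuinely different route from the paper's. The paper disposes of the statement in a single sentence, arguing component by component: the involution either maps a connected component $C$ of $\Gamma$ isomorphically onto another component, in which case the corresponding component of $\Gamma^*$ is isomorphic to both, or it maps $C$ onto itself, in which case the corresponding component of $\Gamma^*$ is obtained from $C$ by one of the foldings of Section~\ref{sec:ADEDynkin} ($A_{2n-1}\leftrightarrow D_{n+1}$, $E_6\leftrightarrow E_6$), and these preserve Coxeter numbers by Remark~\ref{remark:coxeter}; the same applies to $\Delta$. You instead characterize the Coxeter number spectrally, symmetrize Stembridge's positive common eigenvector (from the proof of Proposition~\ref{prop:subadditive_ADE}) under the involution, transport it across the duality by the same $\sqrt2$-rescaling rule that defines $\rho^*$, verify the eigenvector equation on $G^*$ by reusing the neighbor-sum computation from the proof of Proposition~\ref{prop:duality}, and conclude with Perron--Frobenius and the injectivity of $t\mapsto 2\cos(\pi/t)$. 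What the paper's approach buys is brevity: it only needs the observation that the two possible actions of the involution on a component preserve the Coxeter number. What your approach buys is uniformity: you never analyze how the involution acts on individual components and never invoke the fact that the specific foldings preserve Coxeter numbers --- the equality falls out of the eigenvalue computation --- and as a byproduct you exhibit an explicit positive common eigenvector on $G^*$. One small point worth making explicit: your third paragraph uses Proposition~\ref{prop:duality} not as stated (an identity between the maxima in (\ref{eq:lambda_def}) and (\ref{eq:lambda_def_star})) but in the stronger term-by-term form, namely that each of the two neighbor sums at a fixed vertex of $G$ is exactly $\sqrt2$ times the corresponding sum at its split image in $G^*$, together with the dual statement at merged vertices obtained by applying the computation to $B^*$ and using $(B^*)^*=B$. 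That stronger form is indeed what the paper's proof of Proposition~\ref{prop:duality} establishes, so the reuse is legitimate, but since the proposition itself only asserts the equality of maxima, you should flag that you are citing its proof rather than its statement.
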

\begin{proof}
 The involution $\iinv$ either maps each connected component $C$ of $\Gamma$ isomorphically onto another connected component of $\Gamma$ (then the corresponding connected component of $\Gamma^*$ will be isomorphic to them as well) or it maps $C$ onto itself via one of the maps described in Section~\ref{sec:ADEDynkin}. 
\end{proof}

\subsection{Reduction to tensor products}\label{subsec:reduction}
We roughly describe our strategy of using Corollary~\ref{cor:duality}. Consider a bigraph $G$ belonging to one of the three remaining infinite families. We will find two involutions $\iinv$ and $\jinv$ on $G$ such that $\Fix(\iinv)\cup \Fix(\jinv)=\Vert(G)$. Now, if $u\in \Fix(\iinv)$ is fixed by $\iinv$ then $\delta_u$ is an $\iinv$-symmetric labeling. Then we will find a set $X_\iinv$ of edges that makes $(G,\iinv)$ into a symmetric unlabeled bigraph whose dual is a tensor product of Dynkin diagrams. Since in this case the tropical $T$-system has already been shown to be periodic in~\cite{K}, we can deduce that $\t^{\delta_u}_v(t+2N)=\t^{\delta_u}_v(t)$, where $2N$ is the period of the corresponding $T$-system and $u\in\Fix(\iinv)$. Applying the same construction to $\jinv$, we will get that $\t^{\delta_u}_v(t)$ is periodic for all $u\in\Fix(\iinv)\cup\Fix(\jinv)=\Vert(G)$.

We formulate this strategy as a proposition:
\begin{proposition}\label{prop:iinv_jinv}
 Suppose $G$ is an admissible $ADE$ bigraph, and assume that there are two symmetric unlabeled bigraphs 
 \[\B_\iinv:=(G,\iinv,V^\iinv_+,V^\iinv_0,V^\iinv_-,X_\iinv)\quad\text{and}\quad\B_\jinv:=(G,\jinv,V^\jinv_+,V^\jinv_0,V^\jinv_-,X_\jinv)\] 
 such that $V_0^\iinv\cup V^\jinv_0=\Vert(G)$. Let $\B^*_\iinv=(G^*_\iinv,\dots)$ and $\B^*_\jinv=(G^*_\jinv,\dots)$ be their duals. If $G^*_\iinv$ and $G^*_\jinv$ are tensor products of $ADE$ Dynkin diagrams then the tropical $T$-system for $G$ is periodic, that is, for all $u,v\in\Vert(G)$ one has
 \[\t^{\delta_u}_v(t+2N)=\t^{\delta_u}_v(t),\]
 and, moreover, $N=h(G)+h'(G)$. 
\end{proposition}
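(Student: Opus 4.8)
The plan is to prove the periodicity one source vertex $u\in\Vert(G)$ at a time, reducing each case to Keller's theorem for tensor products through the duality machinery of Section~\ref{subsec:involutions}. Since the fixed-point sets of the two involutions cover $\Vert(G)$, every $u$ lies in $\Fix(\iinv)=V_0^\iinv$ or in $\Fix(\jinv)=V_0^\jinv$; by symmetry of the argument I would treat the case $u\in\Fix(\iinv)$ in detail, the other being identical with $\jinv$ in place of $\iinv$. The first thing to record is that whenever $u$ is fixed by $\iinv$, the indicator labeling $\delta_u$ is $\iinv$-symmetric: since $\iinv$ is an involution with $\iinv(u)=u$, we have $\delta_u(\iinv(v))=1$ iff $\iinv(v)=u$ iff $v=u$, so $\delta_u\circ\iinv=\delta_u$. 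Hence equipping the symmetric unlabeled bigraph $\B_\iinv$ with the labeling $\rho:=\delta_u$ yields a genuine symmetric labeled bigraph $(G,\iinv,V_+^\iinv,V_0^\iinv,V_-^\iinv,\delta_u,X_\iinv)$.

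Next I would dualize. Let $\rho^*$ be the labeling on $G^*_\iinv$ produced from $\rho=\delta_u$ by the rules preceding Proposition~\ref{prop:duality}, so that $(G^*_\iinv,\iinv^*,U_+,U_0,U_-,\rho^*,X_\iinv^*)$ is the dual symmetric labeled bigraph. By hypothesis $G^*_\iinv$ is a tensor product of $ADE$ Dynkin diagrams. Keller's theorem \cite{K} then gives periodicity of the birational $T$-system on $G^*_\iinv$ with period $2\big(h(G^*_\iinv)+h'(G^*_\iinv)\big)$, and the implication (\ref{item:birational})$\Longrightarrow$(\ref{item:strong_tropical}) of Proposition~\ref{prop:tropical_birational_summary} (via Lemma~\ref{lemma:tropical_birational}) upgrades this to periodicity of the tropical $T$-system on $G^*_\iinv$ for \emph{every} initial labeling, in particular for $\rho^*$. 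Thus $\t^{\rho^*}_w(t+2N^*)=\t^{\rho^*}_w(t)$ for all $w\in U$, where $N^*:=h(G^*_\iinv)+h'(G^*_\iinv)$. Now I invoke Corollary~\ref{cor:duality}: both $G$ and $G^*_\iinv$ are recurrent (the former because it is an admissible $ADE$ bigraph, by Corollary~\ref{cor:recurrent_commuting}; the latter because tensor products are recurrent), so periodicity on the dual side transfers to $G$, giving $\t^{\delta_u}_v(t+2N^*)=\t^{\delta_u}_v(t)$ for all $v\in\Vert(G)$.

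It remains to identify $N^*$ with the claimed $N=h(G)+h'(G)$ and to run both involutions. Since $G$ and $G^*_\iinv$ are both admissible $ADE$ bigraphs, Proposition~\ref{prop:coxeter_symmetric} gives $h(G)=h(G^*_\iinv)$ and $h'(G)=h'(G^*_\iinv)$, whence $N^*=N$. Repeating the argument verbatim with $\jinv$ and $G^*_\jinv$ covers every $u\in\Fix(\jinv)$, and because $\Fix(\iinv)\cup\Fix(\jinv)=\Vert(G)$ the two cases together establish $\t^{\delta_u}_v(t+2N)=\t^{\delta_u}_v(t)$ for all $u,v\in\Vert(G)$. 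Within this proposition the argument is essentially an assembly of facts proved in advance, so I do not expect a genuine obstacle here; the only points needing care are verifying the recurrence hypotheses of Corollary~\ref{cor:duality} and matching the Coxeter numbers on the two sides, the latter being precisely the reason Proposition~\ref{prop:coxeter_symmetric} was isolated beforehand. The truly hard combinatorial work---exhibiting the involutions $\iinv,\jinv$ and the edge sets $X_\iinv,X_\jinv$ for each of the three remaining families and checking that the duals are tensor products---lies in the family-by-family verification that feeds this proposition, not in the proposition itself.
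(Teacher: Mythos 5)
Your proof is correct and takes essentially the same approach as the paper: the paper's own one-line proof invokes precisely the same ingredients---the observation that $\delta_u$ is symmetric under an involution fixing $u$, Corollary~\ref{cor:duality}, and Proposition~\ref{prop:coxeter_symmetric}---with Keller's theorem and Proposition~\ref{prop:tropical_birational_summary} supplying tropical periodicity on the tensor-product side. Your write-up simply makes explicit the details (the recurrence hypotheses of Corollary~\ref{cor:duality}, and the upgrade of Keller's birational periodicity to tropical periodicity for all initial labelings) that the paper leaves implicit.
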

\begin{proof}
 Follows from Proposition~\ref{prop:coxeter_symmetric} and Corollary~\ref{cor:duality} using an observation above that if $u\in\Fix(\iinv)$ (resp., $u\in\Fix(\jinv)$) then $\delta_u$ is an $\iinv$-symmetric (resp., $\jinv$-symmetric) labeling.
\end{proof}

\subsubsection{The case of $(A^{m-1}D)_n$}

Let $\B_\iinv=((A^{m-1}D)_n,\i,V^\iinv_+,V^\iinv_0,V^\iinv_-,X_\iinv)$ be as follows. Distinguish two common vertices $v_1^+$ and $v_1^-$ of the two single color subdiagrams of type $D$, see Figure \ref{fig:zper17} for an example.  
   \begin{figure}
    \begin{center}
\vspace{-.1in}
\scalebox{0.52}{
\input{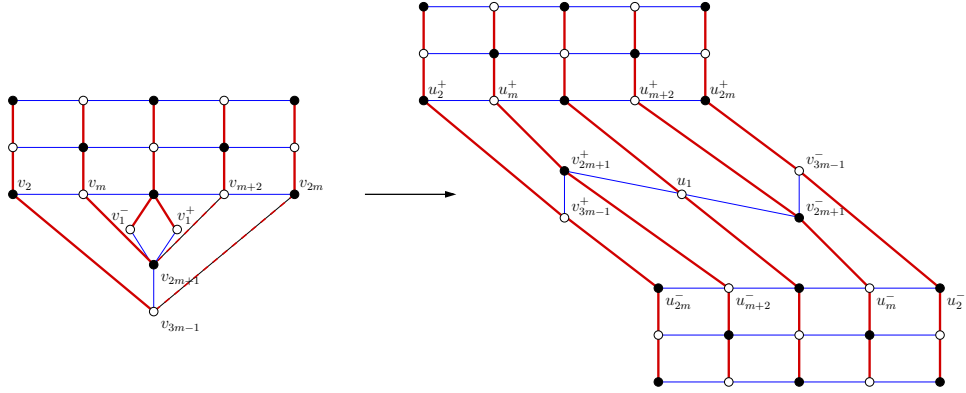} 
}
\vspace{-.1in}
    \end{center} 
    \caption{The reduction of $(A^{m-1}D)_n$ to $A_{2n-1} \otimes A_{2m-1}$. The edges in $X_\iinv$ are dashed.}
    \label{fig:zper17}
\end{figure}
Define $\iinv$ to be the involution that swaps $v_1^+$ and $v_1^-$ and fixes all other vertices. Denote the rest of the vertices of blue $D_{m+1}$ by $v_{2m+1}$ through $v_{3m-1}$, and  denote the adjacent vertices of the blue copy of $A_{2m-1}$ by $v_2$ through $v_{2m}$.
Finally, let 
\[X_\iinv = \{(v_{m+2}, v_{2m+1}), \ldots, (v_{2m}, v_{3m-1})\}.\] 
The construction of the dual $b^*$ is shown in Figure \ref{fig:zper17}. It is clear that one gets $A_{2n-1} \otimes A_{2m-1}$. 

In order to apply Proposition~\ref{prop:iinv_jinv}, we need to define an involution $\jinv$. It flips the graph in Figure~\ref{fig:zper17} on the left horizontally except that it fixes $v_1^+$ and $v_1^-$. In this case, $X_\jinv$ consists of only one edge $(v_{2m+1},v_1^+)$ and the dual symmetric unlabeled bigraph will be of type $D_{n+1}\otimes D_{m+1}$, so we are done.

Alternatively, we could avoid introducing $\jinv$ as follows. By Corollary \ref{cor:duality}, we know that 
\[\t^{\delta_u}_v(\e_v+2(h+h'))=\t^{\delta_u}_v(\e_v)\] 
holds for all fixed points $u$ of $\iinv$.

It remains to show the periodicity with initial conditions $\delta_{v_1^+}$ and $\delta_{v_1^-}$. We can use the above reduction to $A_{2m-1} \otimes A_{2n-1}$ again if we change either of those initial conditions into $(\delta_{v_1^+} + \delta_{v_1^-})/2$. It is easy to see 
this has no effect on the tropical system outside of vertices $v_1^+$ and $v_1^-$, from which one easily observes the desired behavior.

\subsubsection{The case of $(AD^{m-1})_n$}

Let $\B_\iinv=((AD^{m-1})_n,\iinv,V^\iinv_+,V^\iinv_0,V^\iinv_-,X_\iinv)$ be as follows.
   \begin{figure}
    \begin{center}
\vspace{-.1in}
\scalebox{0.8}{
\input{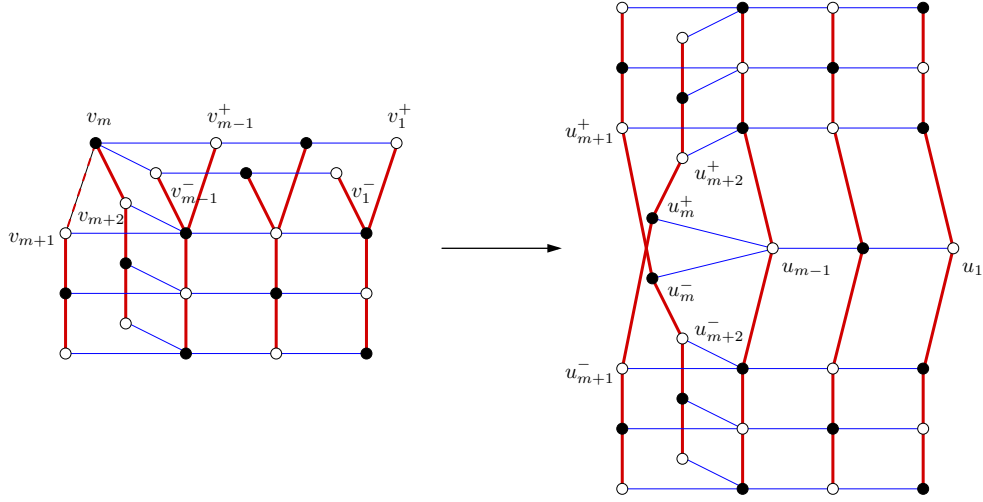} 
}
\vspace{-.1in}
    \end{center} 
    \caption{The reduction of $(AD^{m-1})_n$ to $A_{2n-1} \otimes D_{m+1}$. The edges in $X_\iinv$ are dashed.}
    \label{fig:zper18}
\end{figure}
Define $\iinv$ (resp., $\jinv$) to be the involution that swaps the exceptional vertices of all red (resp., blue) subgraphs $D_{n+1}$ (resp., $D_{m+1}$) and fixes the rest of the vertices (compare with Figure \ref{fig:zper13}). Because of the symmetry, it is enough to consider only the case of $\iinv$. 
Let $v_1^{+}, v_1^-, \ldots, v_{m-1}^+, v_{m-1}^-$ be the vertices swapped by $\iinv$, see Figure \ref{fig:zper18}. Let $v_m$ be the remaining vertex on this copy of $A_{2m-1}$, and let $v_{m+1}$ and $v_{m+2}$ be the two vertices
adjacent to $v_m$ by red edges. Let 
\[X_\iinv = \{(v_m, v_{m+1})\}.\] 
The construction of the dual $\B_\iinv^*$ is shown in Figure \ref{fig:zper18}. It is clear that one gets $A_{2n-1} \otimes D_{m+1}$. Thus we are done by Proposition~\ref{prop:iinv_jinv}

\subsubsection{The case of $EE^{n-1}$}

Let $\B_\iinv=(EE^{n-1},\iinv,V^\iinv_+,V^\iinv_0,V^\iinv_-,X_\iinv)$ be as follows.
   \begin{figure}
    \begin{center}
\vspace{-.1in}
\scalebox{0.7}{
\input{zper20.pstex_t} 
}
\vspace{-.1in}
    \end{center} 
    \caption{The reduction of $EE^{n-1}$ to $A_{2n-1} \otimes E_6$. The edges in $X_\iinv$ are dashed.}
    \label{fig:zper20}
\end{figure}
We can distinguish one special copy of $E_6$ and $n-1$ non-special copies of $E_6$ in the obvious way. 
We are going to make use of the involution for $E_6$ in Figure \ref{fig:zper14}. The involution $\iinv$ acts on the special copy of $E_6$. The involution $\jinv$ acts on the $n-1$ non-special copies of $E_6$. Let $v_1, v_2, v_3^+, v_3^-,v_4^+, v_4^-$ be the vertices of the special copy 
of $E_6$, see Figure \ref{fig:zper20}. Let $v_5$ and $v_6$ be the vertices adjacent to $v_1$ and $v_2$, as in Figure \ref{fig:zper20}. Finally, choose 
\[X_\iinv = \{(v_1, v_5),(v_2, v_6)\}.\]
It is clear that the dual $\B_\iinv^*$ is of type $A_{2n-1} \otimes E_6$.

Now, let $\B_\jinv=(EE^{n-1},\jinv,V^\jinv_+,V^\jinv_0,V^\jinv_-,X_\jinv)$ be as follows.
   \begin{figure}
    \begin{center}
\vspace{-.1in}
\scalebox{0.65}{
\input{zper19.pstex_t} 
}
\vspace{-.1in}
    \end{center} 
    \caption{The reduction of $EE^{n-1}$ to $D_{n+1} \otimes E_6$. The edges in $X_\jinv$ are dashed.}
    \label{fig:zper19}
\end{figure}
Let $v_1^+$, $v_1^-$, $\ldots$, $v_{2n-2}^+$, $v_{2n-2}^-$ be the vertices not fixed by $\jinv$, see Figure \ref{fig:zper19}. Let $v_{2n-1}$ and $v_{2n}$ be the remaining two vertices in the corresponding copies of $A_{2n-1}$, and let $v_{2n+1}$ be one of the remaining vertices connected to $v_{2n}$.
Choose 
\[X_\jinv = \{(v_{2n}, v_{2n+1})\}.\] 
It is clear that the dual $\B_\jinv^*$ is $D_{n+1} \otimes E_6$, so the proof now again follows from Proposition~\ref{prop:iinv_jinv}.

\appendix

\section{Stembridge's classification, exceptional cases} \label{sec:app}

Figures \ref{fig:zper21}, \ref{fig:zper22}, \ref{fig:zper23}, \ref{fig:zper24} show the eight exceptional cases of Stembridge's classification, preserving his notation. The figures for the cases $D_6*D_6,\ E_8*E_8,\ D_5 \boxtimes A_7,\ E_7 \boxtimes D_{10},$ and $E_8 * E_8 \equiv E_8 \equiv E_8$ are borrowed from~\cite{S}.
   \begin{figure}
    \begin{center}
\vspace{-.1in}
\scalebox{0.7}{
\input{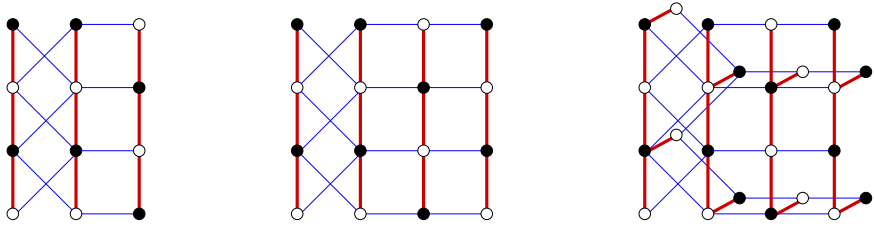} 
}
\vspace{-.1in}
    \end{center} 
    \caption{The cases $D_6 * D_6$, $E_8 * E_8$, and $E_8 * E_8 \equiv E_8$.}
    \label{fig:zper21}
\end{figure}
   \begin{figure}
    \begin{center}
\vspace{-.1in}
\scalebox{0.9}{
\input{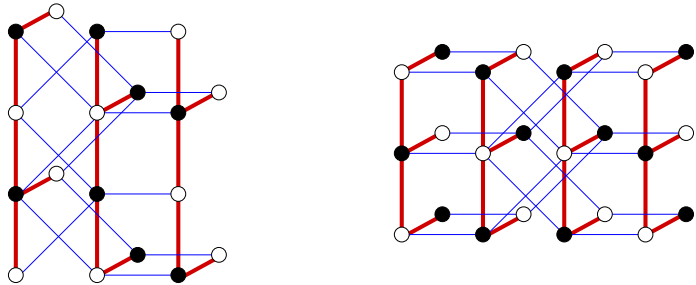} 
}
\vspace{-.1in}
    \end{center} 
    \caption{The cases $D_6 * D_6 \equiv D_6$  and $E_6 \equiv E_6 * E_6 \equiv E_6$.}
    \label{fig:zper22}
\end{figure}
   \begin{figure}
    \begin{center}
\vspace{-.1in}
\scalebox{0.8}{
\input{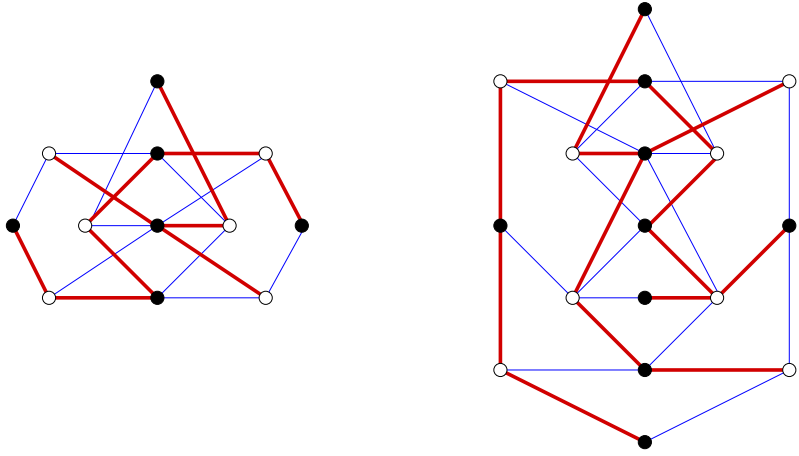} 
}
\vspace{-.1in}
    \end{center} 
    \caption{The cases $D_5 \boxtimes A_7$  and $E_7 \boxtimes D_{10}$.}
    \label{fig:zper23}
\end{figure}
   \begin{figure}
    \begin{center}
\vspace{-.1in}
\scalebox{0.8}{
\input{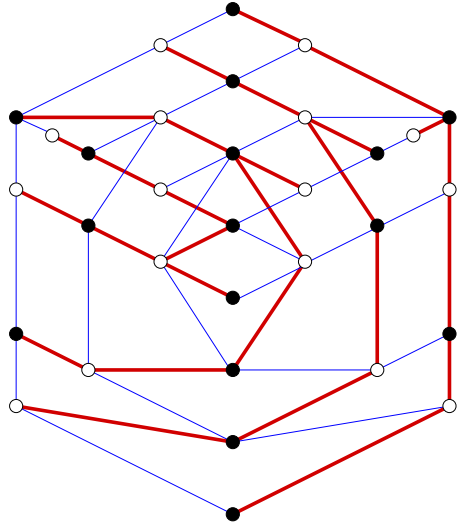} 
}
\vspace{-.1in}
    \end{center} 
    \caption{The case $E_8 * E_8 \equiv E_8 \equiv E_8$.}
    \label{fig:zper24}
\end{figure}

\begin{remark}\label{remark:computation}
 To verify validity of our theorem in each of the eight cases, it suffices to show that \[\t^{\delta_u}_v(\e_v+2(h+h'))=\t^{\delta_u}_v(\e_v)\] for arbitrary vertices $u,v\in\VertQ$. This can easily be done and is not computation intensive by computer standards. Indeed, for each exceptional quiver $Q$ with $d$ vertices, one needs to perform $O(d^2\cdot 2(h(Q)+h'(Q)))$ arithmetic operations. For the largest graph $E_8 * E_8 \equiv E_8 \equiv E_8$, we have $d=32$ and $h=h'=30$. Checking periodicity for all the eight cases takes less than a second on a standard office computer. 
 
 For example, one verification for the case of $D_6 * D_6$ would run as in Figure \ref{fig:zper25}. A complete verification for the case $A_3\otimes A_1$ was done in Example~\ref{example:A3_A1}.
\end{remark}

   \begin{figure}
    \begin{center}
\vspace{-.1in}
\scalebox{0.65}{
\input{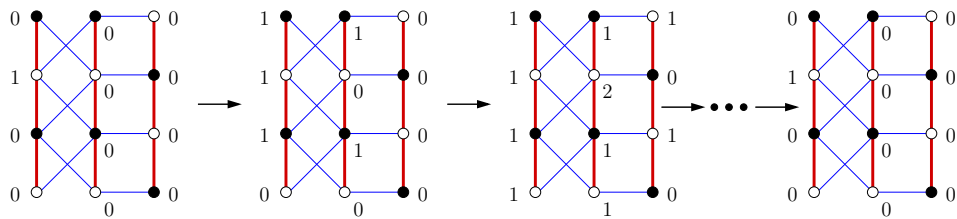} 
}
\vspace{-.1in}
    \end{center} 
    \caption{Verifying the periodicity of $\t^{\delta_u}$ for one particular choice of $u$ for $D_6 * D_6$ in $30=2(10+5)$ steps.}
    \label{fig:zper25}
\end{figure}

\bibliographystyle{plain}
\bibliography{zperiod}

\end{document}